\documentclass[11pt]{article_ET_m}

\usepackage[left=2.5cm, right=2.5cm, top=2.5cm, bottom=2.5cm]{geometry}
\usepackage[dvipdf]{graphicx}
\usepackage{latexsym}
\usepackage{amsmath,amssymb,amsfonts,amsthm}
\usepackage{xcolor}
\usepackage{mathrsfs}
\usepackage{bbm}
\usepackage{bm}
\usepackage{enumitem}
\usepackage{mathtools}
\usepackage{comment}

\usepackage{graphicx}
\usepackage{amstext}
\usepackage[french,english]{babel}
\usepackage[utf8]{inputenc}
\usepackage[T1]{fontenc}
\usepackage[numbers,comma,sort]{natbib}

\usepackage{pdfsync}

\numberwithin{equation}{section}
\allowdisplaybreaks

\definecolor{db}{RGB}{0, 0, 130}
\usepackage[colorlinks=true,citecolor=red,
linkcolor=db,urlcolor=blue,pdfstartview=FitH]{hyperref}

\newcommand{\R}{\mathbb R}

\newcommand{\N}{\mathbb N}

\newcommand{\E}{\mathbb E}
\newcommand{\PP}{\mathbb P}
\newcommand{\Q}{\mathbb Q}
\newcommand{\T}{\mathbb{T}}

\newcommand{\dd}{\mathrm{d}}

\newtheorem{theorem}{Theorem}[section]

\newtheorem{proposition}[theorem]{Proposition}
\newtheorem{lemma}[theorem]{Lemma}
\newtheorem{corollary}[theorem]{Corollary}
\newtheorem{definition}[theorem]{Definition}

\newtheorem{assumptions}[theorem]{Assumption}

\newtheorem{remark}[theorem]{Remark}
\newtheorem{example}[theorem]{Example}

\usepackage{todonotes}

\author{
Nicoleta Cazacu%
\footnote{CMAP, Ecole polytechnique, CNRS, I.P. Paris,
91128 Palaiseau, France.
\texttt{nicoleta.cazacu@polytechnique.edu},
\texttt{milica.tomasevic@polytechnique.edu}.}
\and
Alexandre Richard%
\footnote{Universit\'e Paris-Saclay, CentraleSup\'elec and
CNRS FRA-3487, France.
\texttt{alexandre.richard@centralesupelec.fr}.}
\and
Milica Toma\v{s}evi\'c%
\footnotemark[1]
}

\title{Uniform-in-time propagation of chaos for singular $L^k$ interaction kernels with rates}

\begin{document}

\date{\today}

\maketitle

\begin{abstract}
Based on a semigroup approach, we prove quantitative uniform-in-time
propagation of chaos for two systems of interacting particles with singular interaction kernels $K$ of type $L^k$, $k>d$, and a contractive Lipschitz confinement force field. First, we consider a system with moderate interaction, which means that the particles interact through the mollified kernel $K\ast V^N$, for some approximation of identity $V^N = N^{d\alpha} V(N^\alpha \cdot),\, \alpha>0$. 
Assuming that $\|K\|_{L^k}$ is sufficiently small, we obtain that for any
$\varepsilon>0$ the uniform-in-time rate is of order
\(
N^{-\min\left(\alpha,\frac12-\frac{d\alpha}{2}-\varepsilon\right)}
\), plus an exponentially small in time contribution of the initial condition.
Second, for the classical, non-mollified particle system we establish 
uniform-in-time propagation of chaos with  rate 
\(
N^{-\min\left(\alpha,\frac12-\frac{d\alpha}{2}-\varepsilon\right)} + \|K-K\ast V^N\|_{L^{k}}\), plus an exponentially small in time contribution of the initial condition.
The method relies on the regularity properties of the solution of the
associated nonlinear Fokker-Planck equation combined with estimates of the Duhamel formulation of the mollified empirical measure, together with partial Girsanov transforms for 
the non-mollified system of particles.
 \end{abstract}

\noindent \textit{{\bf Keywords and phrases:}
mean-field limit, uniform-in-time propagation of chaos, 
singular interactions.}
\vspace{0.3cm} \\
\noindent {\bf MSC2020 subject classification:} 60H10, 60H15, 60K35, 35Q84, 82C22

\setcounter{tocdepth}{2}
\renewcommand\contentsname{}
\vspace{-1cm}

\tableofcontents

\section{Introduction} \label{Intro}

Systems of stochastically interacting particles arise in a wide range
of applications, from statistical physics and plasma dynamics to 
mathematical biology, opinion dynamics and, more recently, the training of 
large machine-learning models, see e.g.~\cite{Sznitman,Stevens,FournierJourdain,CJLZ21,GerberEtAl}. A recurring 
question is that of their mean-field limit: when the number
 of particles $N$ becomes large, the behaviour of the whole system
  is expected to be well approximated by that of a single typical
  particle whose law solves a nonlinear, McKean-Vlasov-type equation. 
  Making this approximation quantitative and understanding whether it
   remains valid uniformly over arbitrarily long time horizons is the 
   object of a vast and active literature, see references below. The difficulty is amplified when the interaction between particles is singular, for example when the
    interaction kernel is no longer bounded or Lipschitz but only has some integrability properties. The goal of this paper is to provide quantitative uniform-in-time propagation of chaos results for a class of integrable interaction kernels.

Let us introduce the following nonlinear Fokker-Planck equation on $\R^d$:
\begin{equation}\label{nlfp-intro}
\partial_t u_t =\Delta u_t + \nabla \cdot \big(u_t 
\big(F + K * u_t\big)\big), 
\end{equation}
where $F:\R^d \to \R^d$ is a confining force field and $K:\R^d \to \R^d$ is an  
interaction kernel. According to 
Itô's formula, Equation~\eqref{nlfp-intro} describes the time evolution of the probability distribution $u_t$ of a particle whose dynamics is given by the following 
McKean-Vlasov SDE:
\begin{equation}\label{mckean-intro}
\left\{
\begin{array}{ll}
\dd X_t = -F(X_t) \dd t- (K * u_t)(X_t) \dd t+ \sqrt{2} \dd W_t, \quad \forall t >0,\\
\text{Law}(X_t)=u_t,
\end{array}
\right.
\end{equation}
where $(W_t)_{t \geq 0}$ is a standard Brownian motion in $\R^d$. Equation~\eqref{mckean-intro} can be seen as a typical particle obtained in the
large $N$ limit of interacting particle systems:
\begin{itemize}
\item on the one hand, the classical setting, see \emph{e.g.}~\cite{Sznitman}, reads
\begin{equation}\label{particle-system-intro-gen}
\dd Y_t^{i,N} = -F(Y_t^{i,N}) \dd t- \frac{1}{N} \sum_{j=1}^N K(Y_t^{i,N} - Y_t^{j,N}) \dd t+ \sqrt{2} \dd W_t^i, \quad i=1,\ldots,N,
\end{equation}
where $(W_t^i)_{t \geq 0}$ are independent standard Brownian motions in $\R^d$;

\item on the other hand, the so-called moderate regime \emph{à la} Oelschl\"ager considers particles with a shorter range of interaction modulated by a mollifier $V^N$ as follows
\begin{equation}\label{particle-system-intro}
\dd X_t^{i,N} = -F(X_t^{i,N}) \dd t- \frac{1}{N} \sum_{j=1}^N K \ast
 V^N(X_t^{i,N} - X_t^{j,N}) \dd t+ \sqrt{2} \dd W_t^i, \quad i=1,\ldots,N.
\end{equation}
\end{itemize}
Throughout this work, it will be assumed that $F$ is Lipschitz and contractive (not necessarily derived from a potential), and that $K\in L^k(\R^d)$ for $k>d$ which corresponds to the Ladyzhenskaya-Prodi-Serrin condition for time-homogeneous coefficients (see also Krylov-R\"ockner~\cite{KR05} for stochastic differential equations). We emphasise here that there is no additional structural hypothesis on $K$. Such particle systems are known to be well-posed and to propagate chaos  on finite time intervals, \emph{i.e.} the empirical measure of~\eqref{mckean-intro} converges to the law of the solution of~\eqref{nlfp-intro}, see~\cite{T23,Mauri,HaoRocknerZhang}. 
To the best of our knowledge, however, uniform-in-time propagation of
chaos on the whole space for general $L^k$ kernels, without any structural assumption such
as a gradient or divergence-free form, has not been addressed so far.

\medskip

The aim of this work is thus to provide quantitative uniform-in-time propagation of chaos results for the particle systems~\eqref{particle-system-intro-gen} and~\eqref{particle-system-intro}. In the spirit of~\cite{ORT,ORT2}, 
we further develop the semigroup approach in combination with the recent results of~\cite{HW25}. Let us start with a simple observation: in~\eqref{mckean-intro}, as the kernel is in $L^k$, one expects the law $u_{t}$ to be in the dual space $L^{k^*}$. This leads us to consider the approximation of $u$ in such functional spaces, and naturally smooth the empirical measure $\mu^N$ of our particle systems with a mollifier. Let us introduce $V^N(\cdot) \coloneqq N^{\alpha d} V(N^{\alpha} \cdot)$ with $\alpha \in (0,1)$ and $u_{t}^N = V^N \ast \mu_{t}^N$. Our goal will be to estimate quantitatively the distance between $u_t$ and $u^N_t$ in the dual space, as well as the Wasserstein distance between the law of $N$ particles and $u_{t}^{\otimes N}$. Interestingly, our proof relies on cross-estimating these two quantities to derive the following results:
\begin{enumerate}[label=(\alph*)]
\item In the moderate regime~\eqref{particle-system-intro}, which is particularly convenient as $u^N_{t}$ appears in the drift, under a smallness hypothesis on $ \lVert K\rVert_{L^k}, k>d$, assuming $\alpha<1/(d\vee 2)$ and for any initial data with some moment assumptions (but no i.i.d. nor exchangeability condition), we obtain
that for any $t>0$ and any $N\geq1$,
\begin{align}
\label{eq:first-rate}
    \big\|\|u_t^N-u_t\|_{L^{k^*}}\big\|_{L^2(\PP)}
    &\lesssim (1\wedge t)^{-\frac12-\frac{d}{2k}}
    \left( e^{-ct}
    \mathbb W_2^N\big(\mathcal L(X_0^N),u_0^{\otimes N}\big)
    + N^{-\min\left(\alpha,\frac12-\frac{d\alpha}{2}-\varepsilon\right)}
    \right)
\end{align}
and
\begin{equation*}
\mathbb{W}_2^N\bigl(\mathcal{L}_{X_t^N},u_t^{\otimes N}\bigr)
\lesssim
\Big(e^{-ct} \mathbb{W}_2^N\bigl(\mathcal{L}_{X_0^N},u_0^{\otimes N}\bigr)
+ N^{-\min\left(\alpha,\frac12-\frac{d\alpha}{2}-\varepsilon\right)}
\Big),
\end{equation*}
where $\mathbb{W}^N_2$ is the renormalised $2$-Wasserstein distance and 
$X^N \coloneqq (X^{1,N},\cdots,X^{N,N})$ is the system of $N$ moderately 
interacting particles~\eqref{particle-system-intro}. See Theorem~\ref{thm:main}.

\item In the classical setting~\eqref{particle-system-intro-gen}, we build upon the proof of the moderate regime, but as the interaction is genuinely singular for finite $N$, we need additional ingredients such as regularity estimates on the law of any couple of particles.
Under the same hypotheses and assuming further that $k>2\vee d$, we obtain that for any $t>0$ and any $N\geq1$,
\begin{equation*}
\mathbb{W}_2^N\bigl(\mathcal{L}_{Y_t^N},u_t^{\otimes N}\bigr)
\lesssim \Big(e^{-ct}
\mathbb{W}_2^N\bigl(
\mathcal{L}_{Y_0^N},u_0^{\otimes N}
\bigr) +N^{-\min\left(\alpha,\frac12-
\frac{d\alpha}{2}-\varepsilon\right)}
+\|K-K\ast V^N\|_{L^k}\Big),
\end{equation*}
and for $v_t^N\coloneqq V^N\ast \big(\frac1N\sum_{i=1}^N\delta_{Y_t^{i,N}} \big)$,
\begin{align*}
    \left\|\|v_t^N-u_t\|_{k^*}\right\|_{L^2(\PP)}
    &\lesssim (1\wedge t)^{-\frac12-\frac{d}{2k}}
    \left( e^{-ct} \mathbb W_2^N\big(\mathcal L(Y_0^N),u_0^{\otimes N}\big)
    +    N^{-\min\left(\alpha,\frac12-\frac{d\alpha}{2} -\varepsilon\right)}
    +\|K-K\ast V^N\|_{L^k} \right).
\end{align*}
The complete statement is in Theorem~\ref{thm:main-non-mollified}.
\end{enumerate}
An important consequence of the previous convergences is that if we further assume that the initial family
\(\bigl(X_0^{i,N},X_0^i\bigr)_{\{i=1,\ldots,N\}}\)
is exchangeable, then the coupled dynamics remain exchangeable and we get
\begin{equation*}
\E\left[|X_t^{1,N}-X_t^1|^2\right]^{\frac12}
\leq C\Big(e^{-ct}\E\left[|X_0^{1,N}-X_0^1|^2\right]^{\frac12}
+N^{-\min\left(\alpha,\frac12-\frac{d\alpha}{2}-\varepsilon\right)}\Big),
\end{equation*}
and similarly for the non-mollified particle system with the additional error $\|K-K\ast V^N\|_{L^k}$. A more general statement holds without exchangeability, see Corollary~\ref{cor:pathwise-poc}.

We observe that the same rate $N^{-\min\left(\alpha,\frac12-\frac{d\alpha}{2}-\varepsilon\right)}$ appears in~\eqref{eq:first-rate} and subsequent equations. In the $L^{k^*}$ norm, this rate is almost optimal, in the sense that up to $\varepsilon$, this is exactly the rate of convergence in $L^{k^*}$ of the empirical density (obtained by mollification with $V^N$) of $N$ i.i.d. samples in $\R^d$ drawn from a Lipschitz density $u$.
This rate is then transferred to the Wasserstein distance since we close the estimations simultaneously in the $L^{k^*}$ norm and the Wasserstein distance.

For the non-mollified particles, no uniform-in-time propagation of chaos is known in general for Krylov-R\"ockner kernels (see discussion on the literature below), which is in particular related to the difficulty to obtain logarithmic Sobolev inequalities (LSI) in this context. We provide a rate, which is implicit in the sense that it involves $\|K-K\ast V^N\|_{L^k}$ and this quantity can only be said to converge to $0$ for a generic $K\in L^k, k<\infty$. Adding some regularity/structure to $K$, for instance assuming that it has some Besov regularity, one can deduce a rate of convergence: we discuss the case of Riesz-type kernels in Example~\ref{ex:riesz}.

We also mention that our results hold under a smallness assumption on $ \lVert K\rVert_{L^k}$. This seems unavoidable, since otherwise there could be non-uniqueness of the stationary laws, which would make any uniform-in-time propagation of chaos impossible. However, playing with the strength of the coercive drift $F$ allows for larger values of $ \lVert K\rVert_{L^k}$, as we explain in Remark~\ref{rk:smallnessK}.

\medskip

Let us now describe how our results relate to the existing literature.
First, for moderate interacting particle systems, propagation-of-chaos results are
 currently available only on finite time intervals. The first 
 rigorous derivation of a limiting equation for a moderately 
 interacting particle system dates back to \citet{O85}, in the context
  of the viscous porous medium equation. Since then, numerous 
  propagation-of-chaos results have been established for 
  mollified interacting particle systems, initially for regular 
  interaction kernels; see, for instance, \citet{MC87} and 
  \citet{JM98}. Results were obtained also for singular 
  interactions. Indeed, using a semigroup approach, \citet{ORT} obtained 
  quantitative propagation-of-chaos estimates for interaction kernels
   belonging to $L^p(B)\cap L^q(B^c)$, allowing for sub-Coulomb,
    Coulomb, and super-Coulomb singularities up to order $d-1$. Note that
    the method of~\cite{ORT} also covers the general 
    $L^k$, $k>d$, interaction kernels considered in this work.
    A similar result for the Burgers equation was established by
     \citet{ORT2} and in the case of kinetic equations for kernels lying in some mixed Besov space of negative
      regularity by~\citet{HJMRZ24}. This framework permits to approximate solutions of nonlinear Fokker-Planck equations by smoothed empirical measures in Lebesgue norms, and is well adapted for numerical approximation~\cite{Cazacu}. Recently, using a relative-entropy method, \citet{CHH25} derived quantitative
      propagation-of-chaos estimates for the viscous porous medium 
      equation and also for interactions with Coulomb-type singularities. We insist on the fact that all the aforementioned results hold on finite time intervals.

Second, finite-in-time propagation of chaos for non-mollified $L^k\, (k>d)$
kernels has been treated in the literature.
On $\R^d$, non-quantitative propagation of chaos has been obtained by \citet{T23} and \citet{HaoRocknerZhang}. 
On the torus $\mathbb{T}^d$, \citet{H22} obtained sharp quantitative propagation of chaos in relative entropy for this class of kernels, with the significant additional condition that the interaction kernel should be divergence free. The compact setting used in \cite{H22} allows to derive an LSI, see~\cite{GBM25}, and it seems possible to derive a uniform-in-time result, see Paragraph~1.1.1 in \cite{H22}.

Third, let us further evoke results on uniform-in-time propagation of chaos in the non-mollified setting. For regular  kernels this has been extensively studied under various and sometimes minimal  contractivity assumptions; see, \cite{Malrieu01,M03,BCV07,CGM08,BGG13} and the references therein. On the torus, for Lipschitz kernels with small
divergence, \citet{LF23} achieved sharp uniform-in-time results using BBGKY hierarchy; \citet{DelarueTse} obtained uniform-in-time weak propagation of chaos for smooth interactions without any convexity, under a stability assumption on the invariant measure of the limiting equation. The case of uniform-in-time propagation of chaos for singular kernels is even more delicate. Continuing with the periodic setting $\mathbb{T}^d$, for $K \in L^1$ which is divergence-free and deriving from a bounded potential, \citet{GBM25} established uniform-in-time propagation of chaos. Using modulated free energy methods, \citet{CRS25,CRS25bis} obtained uniform-in-time propagation of chaos for repulsive Riesz kernels $\nabla |\cdot|^s$ with $s \in (d-2,d)$ and  for log attractive kernels. Moreover, \citet{BBJS26} proved uniform-in-time propagation of chaos for singular kernels (including Coulomb kernel) with a non explicit rate of convergence: the result includes kernels more singular than $L^k(\T^d), k>d$, but that need to be divergence-free, and with exchangeable, lower-bounded initial densities (not directly extendable to the unbounded domains). Interestingly, the divergence-free assumptions allows to avoid a smallness condition on the kernel (as we impose here), but they also show that without such structural assumption, even for a smooth kernel, uniform propagation of chaos may not hold.
On the whole space $\mathbb{R}^d$, there is
 a need for a confinement potential verifying certain regularity assumption of convexity type.
Using a coupling methods, \citet{DEGZ20} prove uniform-in-time 
propagation of chaos for interacting mean-field particle systems
 with possibly non-convex confinement and interaction potentials 
 which are smooth and symmetric. 
Using entropy/energy methods, \citet{RS23} obtained 
global-in-time mean-field convergence for repulsive singular Riesz-type 
diffusive flows of the form $-\log|x|$ or $|x|^{-s}$ (where $0 < s < d-2$).
 For systems exhibiting convex-at-infinity confinement potentials and
  Lipschitz or bounded interactions, \citet{LF23} provided
   sharp uniform-in-time propagation of chaos using BBGKY hierarchy. Moreover,
   establishing and using time-uniform log-Sobolev inequalities, \citet{MRW24},
    established sharp in time propagation of chaos for smooth
    interactions and also for vortex interactions with quadratic
    confinement.

\smallskip

Lastly, we would like to stress that the ergodicity of the associated
 McKean-Vlasov equation is 
a key ingredient in order to derive uniform-in-time propagation of
 chaos results. For $L^k\,(k>d)$, 
interaction kernels, the ergodicity of the McKean-Vlasov equation has been recently 
established by \citet{HW25}. In particular,
they use semigroup estimates and a
  contraction argument to obtain global-in-time 
  results. At the level of interacting particle 
  systems, contractive semigroup techniques 
  have also been employed to study long-time
   behavior by \citet{BGP14}, for the reversible
    mean-field plane-rotator model, whose 
    limiting equation possesses a stable manifold
     of stationary solutions, and by \citet{LP21},
      in a non-reversible setting in which the 
      limiting McKean-Vlasov equation admits a 
      stable periodic orbit rather than a 
      stationary equilibrium.

\paragraph{Organisation of the paper.}
The rest of the paper is organised as follows. Section~\ref{main-result}
introduces the notation, the standing assumptions and states the main results,
Theorem~\ref{thm:main} (for moderately interacting particles), Theorem~\ref{thm:main-non-mollified} (for classical particles) and Corollary~\ref{cor:pathwise-poc}, together with the scheme of their proofs. Section~\ref{sec:pde} gathers the regularity estimates on the nonlinear Fokker-Planck equation.
Section~\ref{sec:micro-macro} controls the error between the microscopic and macroscopic mild
formulations, and Section~\ref{sec:int-sto} the stochastic-integral and moment terms.
Theorem~\ref{thm:main} is then proved in Section~\ref{sec:main-res}, 
Theorem~\ref{thm:main-non-mollified} is proved in Section~\ref{sec:non-mollified}, and finally Section~\ref{sec:corollary} contains the proof of Corollary \ref{cor:pathwise-poc}.
The well-posedness of the McKean-Vlasov equation is established in
Appendix~\ref{app:ex-uniq}, 
some auxiliary lemmas are proved in Appendix~\ref{app:lemmas}, and the partial Girsanov estimates are provided in Appendix~\ref{annex:part-girsanov}.

\paragraph{Acknowledgments.}
This work is partially supported by the SDAIM project jointly funded by the S\~ao Paulo Research Foundation (FAPESP) and the French National Research Agency (ANR) through the grant ANR-22-CE40-0015.

\paragraph{Declaration of AI usage.} AI (Claude and ChatGPT) was initially used only for minor typesetting and suggested the trick involving the function $g$ in Section \ref{sec:main-res} (which shortened the length of some computations).
In the final stage of our preparation of the paper, we submitted it to Claude Opus 5.5, which found typos, a few minor issues (in particular a mistake in the treatment of the case $k=\infty$ in Proposition \ref{prop:lip-bound}) and suggested an improvement on the rate of convergence of the averaged stochastic convolutions when $k<2$ ($d=1$), which corresponds to the last part of the proof of Proposition~\ref{lemma:stochastic-integral}, allowing precisely to obtain $N^{-\min(\alpha,\frac 1 2 - 
\frac{d \alpha}{2}-\varepsilon)}$ instead of 
$N^{-\min(\alpha,\frac 1 2 - 
\frac{d \alpha}{k \wedge 2}-\varepsilon)}$.

\section{Main result}\label{main-result}
Here we present the notations and definitions used throughout the paper,
as well as the main result of this work, Theorem~\ref{thm:main} followed 
by the scheme of its proof.
\subsection{Notations and definitions}

Throughout the paper, we use the following notations and conventions:
\begin{itemize}
    \item For $k \in (1, \infty]$, denote by $k^*$ the conjugate
    exponent of $k$ defined by $\frac{1}{k} + \frac{1}{k^*} = 1$.
    \item $\mathcal{L}_X$: the law of the random variable $X$.
    \item $\mathcal{P} (\R^d)$: the set of probability measures
     on $\R^d$ equipped with the weak topology.
    \item $\mathcal{P}_p(\R^d)$: the set of probability measures 
    on $\R^d$ with finite $p$-th moment.
    \item $\mathcal{C}([0,T];\mathcal{P}(\R^d))$: 
    the set of weakly continuous functions from $[0,T]$ to 
    $\mathcal{P}(\R^d)$.
    \item $L^k(\R^d)$: the set of $k$-integrable functions on 
    $\R^d$, and $\|\cdot\|_{L^k}$ the associated norm.
    \item Denote by $\tilde{L}^k(\R^{d})$ the set of 
    locally $k$-integrable functions on $\R^{d}$ in the following
     sense:\\ 
    $\sup_{z \in \R^d } \| \mathbbm{1}_{B(z,1)} f \|_{L^k(\R^{d})} < 
    \infty$, and 
    let $\|\cdot\|_{\tilde{L}^k} \coloneqq \sup_{z \in \R^d } \| \mathbbm{1}_{B(z,1)} \cdot \|_{L^k(\R^{d})} $ be the associated norm.
    \item For $\mu \in \mathcal{P}(\R^d)$, let \(\|\mu \|_{k^*}\coloneqq 
    \sup_{\|f\|_{L^k} \leq 1} |\langle \mu, f \rangle|\)
    be the dual norm of $L^k(\R^d)$, which can be identified with the $L^{k^*}(\R^d)$-norm of the density of $\mu$ with respect to the Lebesgue measure, if it exists.
    \item  For $\mu \in \mathcal{P}(\R^d)$, let 
    $\|\mu \tilde\|_{{k}^*}\coloneqq \sup_{\|f\|_{\tilde{L}^k} 
    \leq 1} |\langle \mu, f \rangle|$, the dual norm of 
    $\tilde{L}^k(\R^d)$.
    \item $W_p^s(\R^d)$: the Sobolev space of order 
    $s$ with integrability $p$ on $\R^d$
    and $\|\cdot\|_{W_p^s}$ the associated natural norm.
    \item For $\beta \in \R$ and $p \geq 1$, define the Bessel potential operator $(I-\Delta)^\beta$ on tempered distributions by
    \begin{equation}
    \label{eq:defBessel}
    (I-\Delta)^\beta u \coloneqq \mathcal{F}^{-1}\left((1+|\cdot|^2)^{\beta}
    \mathcal{F}u(\cdot)\right) ,
    \end{equation}
    where $\mathcal{F}$ denotes the Fourier transform and $\mathcal{F}^{-1}$ its inverse. Let $H_p^\beta(\R^d)$ be the Bessel potential space defined by
    \[H_p^\beta(\R^d)
    \coloneqq
    \left\{u \text{ tempered distribution}:
    ~ (I-\Delta)^{\frac{\beta}{2}} u \in L^p(\R^d)
    \right\},\]
  endowed with the norm \(\|u\|_{H_p^\beta} \coloneqq
    \big\|(I-\Delta)^{\frac{\beta}{2}} u
    \big\|_{L^p(\R^d)}\).
    
    \item For $\mu, \nu \in \mathcal{P}_m(\R^d)$, the 
    $m$-Wasserstein distance $\mathbb{W}_m$ is defined by
\begin{equation*}
    \mathbb{W}_m(\mu,\nu) \coloneqq \inf_{\pi \in \Pi(\mu,\nu)} \left(
    \int_{\R^d \times   \R^d} |x-y|^m \, \dd \pi(x,y) \,
   \right)^{\frac{1}{m}},
    \end{equation*}
    where $\Pi(\mu,\nu)$ is the set of couplings between 
    $\mu$ and $\nu$.
    \item For $\mu,\nu$ two probability measures 
    in $\mathcal{P}_{2}(\R^{dN})$,
    the renormalised $2$-Wasserstein distance $\mathbb{W}^N_2$ 
     is defined by
\begin{equation*}
\mathbb{W}^N_2(\mu,\nu) \coloneqq \inf_{\pi \in \Pi(\mu,\nu)} \bigg(
    \int_{(\R^{d})^N \times (\R^{d})^N} d^N(x,y)^2 \, \dd \pi(x,y) \, 
   \bigg)^{\frac{1}{2}},
\end{equation*}
where $d^N(x,y) \coloneqq 
\left(\frac{1}{N}\sum_{i=1}^N |x_i - y_i|^2\right)^{1/2}$.
\end{itemize}

\subsection{Uniform-in-time propagation of chaos}

Consider the following McKean-Vlasov equation on $\R^d$:
\begin{equation}\label{mckean-vlasov}
    \dd X_t = -F(X_t) \dd t- (K * \mathcal{L}_{X_t})(X_t) \dd t+ \sqrt{2} \dd W_t, \quad t>0,
\end{equation}
where \((W_t)_{t \geq 0}\) is a $d$-dimensional standard Brownian motion 
on a complete filtered probability space $(\Omega, \mathcal{F},
(\mathcal{F}_t)_{t \geq 0}, \mathbb{P})$. Here, $K$ and $F$ satisfy the following assumptions:

\begin{assumptions}\label{assum:potentials}
\leavevmode\par\vspace{-6.9pt}
\begin{enumerate}[label=(\roman*)]
    \item \(F : \R^d \to \R^d\) is Lipschitz continuous and 
    \(\exists \lambda >0, \forall x,y \in \R^d\), 
    \(\langle F(x) - F(y), x-y \rangle \geq \lambda |x-y|^2\).
    \item $K : \R^d \to \R^d$ is in $L^k(\mathbb{R}^d)$ for some \(k>d\).
\end{enumerate}
\end{assumptions}

Solutions of the McKean-Vlasov equation \eqref{mckean-vlasov}
will be considered in the following sense:
\begin{definition}[Global strong solution]\label{def-mckean}
    Fix $(W_t)_{t \geq 0}$ a $d$-dimensional standard
    Brownian motion on a complete filtered probability 
    space $(\Omega, \mathcal{F}, (\mathcal{F}_t)_{t \geq 0}, 
    \mathbb{P})$.
    We call $(X_t)_{t \geq 0}$ a global strong solution 
    of \eqref{mckean-vlasov} if it is an $(\mathcal{F}_t)$-adapted continuous process 
    taking values in $\R^d$ and such that the following conditions hold:
	\begin{enumerate}[label=(\roman*)]
	\item The initial distribution \(\mathcal{L}_{X_0} \in \mathcal{P}(\R^d)\)
    and for any $T \in (0, \infty),\, $
    $\mathbb{E}[\int_0^T (|F(X_s)| + 
    |K * \mathcal{L}_{X_s}(X_s)|) \dd s] < \infty$ and
     $\mathbb{P}$-a.s., 
    \begin{equation*}
        X_t = X_0 - \int_0^t F(X_s) \dd s- 
        \int_0^t (K * \mathcal{L}_{X_s})(X_s) \dd s+ \sqrt{2} W_t, \quad t \in [0,T].
    \end{equation*}
	
	\item For all $T \in (0, \infty)$, 
    $(\mathcal{L}_{X_t})_{t \in [0,T]} \in \mathcal{C}_{T}$
    where 	
    \begin{equation}
    \label{def:C_T}
    \mathcal{C}_{T} \coloneqq \{ \mu \in \mathcal{C}{([0,T];\mathcal{P}(\R^d))} : 
    \sup_{t \in (0,T]} t^{\frac{d}{2k}}\|\mu_t\|_{k^*} < \infty \}.
    \end{equation}
	\end{enumerate}
\end{definition}

 \begin{remark}
    In Definition~\ref{def-mckean}, condition (ii) together with Young's convolution inequality and 
    $\int_0^T s^{-\frac{d}{2k}}\,\dd s < \infty$,
    ensures that $\mathbb{E}[\int_0^T  
    |K * \mathcal{L}_{X_s}(X_s)| \dd s] < \infty$ is verified in (i). Hence
     it is enough to verify $\mathbb{E}[\int_0^T  |F(X_s)| \dd s] < \infty$ in (i).
 \end{remark}

We now state the well-posedness of~\eqref{mckean-vlasov}, whose law solves
the Fokker-Planck equation~\eqref{nlfp-intro} in the sense of distributions.
The existence part relies entirely on Theorem~2.1(4) in~\cite{HRW25}: under
Assumption~\ref{assum:potentials}, the initial datum $u_0$ may be any element of $\mathcal{P}(\R^d)$,
with no additional density or moment requirement. The only difference is that \cite{HRW25} work in the localized Lebesgue spaces $\tilde{L}^k(\R^d)$, whereas we use the usual $L^k(\R^d)$ spaces. Since $\|\mu\|_{k^*}\le\|\mu\tilde\|_{k^*}$, the solution from~\cite{HRW25} already belongs to our solution class $\mathcal{C}_{T}$, so existence is covered by~\cite{HRW25}.

Uniqueness, on the other hand, is not a direct consequence of \cite{HRW25}.
Because our class $\mathcal{C}_{T}$ is defined through the non-localized norm
$\|\cdot\|_{k^*}$, it is strictly larger than the class in which their
uniqueness statement holds; one must therefore rule out solutions living in this
larger class. This requires an independent proof, which is carried out in Appendix~\ref{app:ex-uniq}. The fact that the initial condition may be any probability measure also allows a direct comparison with earlier well-posedness results obtained under stronger assumptions on $u_0$.
\begin{proposition}\label{prop:ex-uniq}
Under Assumption~\ref{assum:potentials},  for any $u_0 \in 
\mathcal{P}(\R^d)$, there exists a unique global strong solution to \eqref{mckean-vlasov} with $\mathcal{L}_{X_0}=u_0$, in the sense of Definition~\ref{def-mckean}.
\end{proposition}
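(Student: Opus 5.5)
Existence follows from Theorem~2.1(4) of \cite{HRW25}, as already noted: their solution has $\sup_{t\in(0,T]} t^{\frac{d}{2k}}\|\mathcal L_{X_t}\tilde\|_{k^*}<\infty$ for every $T$. Since $\|\cdot\|_{k^*}\le\|\cdot\tilde\|_{k^*}$ (because $\|f\|_{\tilde L^k}\le\|f\|_{L^k}$), it lies in $\mathcal C_T$. The integrability of $F(X_s)$ follows from the linear growth of $F$ and a standard moment estimate on $[0,T]$, or directly from \cite{HRW25}. Hence only uniqueness in the larger class $\mathcal C_T$ needs a proof. The plan is to reduce uniqueness to uniqueness of a fixed point for the flow of marginals in $\mathcal C_T$, and then to strong uniqueness of a linear SDE with singular drift.

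\textbf{Step 1 (mild formulation).} Let $(X_t)$ be any solution in the sense of Definition~\ref{def-mckean}, with marginals $\mu_t$. Set $b_\mu(t,x)=(K*\mu_t)(x)$. By Young's inequality, $\|b_\mu(t)\|_{L^\infty}\le\|K\|_{L^k}\|\mu_t\|_{k^*}\lesssim t^{-d/(2k)}$, which is integrable near $0$. Itô's formula then shows that $\mu$ solves the linear Fokker--Planck equation with drift $-F-b_\mu$. Let $P_t$ denote the semigroup (kernel $p_t$) of the Ornstein--Uhlenbeck-type operator $\Delta - F\cdot\nabla$. I would write the Duhamel formula
\[
\mu_t = P_t^*u_0 + \int_0^t \nabla\cdot P^*_{t-s}\big(\mu_s\, b_\mu(s)\big)\,\dd s .
\]
This uses Gaussian-type bounds on $p_t$ and $\nabla p_t$ for Lipschitz $F$, valid on finite horizons. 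I would justify the formula by testing against smooth $f$ and applying Itô to $P_{t-s}f(X_s)$.

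\textbf{Step 2 (contraction in $\mathcal C_T$).} Take two solutions $\mu,\nu$ with the same $u_0$. Consider $\Delta(T_0)=\sup_{t\le T_0} t^{d/(2k)}\|\mu_t-\nu_t\|_{k^*}$. The difference of the nonlinear terms splits as
\[
(\mu_s-\nu_s)\,b_\mu(s)+\nu_s\,\big(b_\mu(s)-b_\nu(s)\big).
\]
Use the smoothing bound $\|\nabla\cdot P^*_{t-s} g\|_{k^*}\lesssim (t-s)^{-1/2}\|g\|_{k^*}$ for the first part, together with $\|b_\mu(s)\|_\infty\lesssim s^{-d/(2k)}$. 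For the second part, $\|b_\mu(s)-b_\nu(s)\|_\infty\le \|K\|_{L^k}\|\mu_s-\nu_s\|_{k^*}$ and $\|\nu_s\|_{TV}=1$, combined with $\|\nabla\cdot P^*_{t-s}\|_{\mathcal M\to L^{k^*}}\lesssim (t-s)^{-\frac12-\frac{d}{2k}}$. This yields
\[
t^{\frac{d}{2k}}\|\mu_t-\nu_t\|_{k^*}\le C\,\Delta(t)\Big(t^{\frac{d}{2k}}\int_0^t (t-s)^{-\frac12}s^{-\frac{d}{k}}\dd s + t^{\frac{d}{2k}}\int_0^t (t-s)^{-\frac12-\frac d{2k}}s^{-\frac d{2k}}\dd s\Big).
\]
Both bracketed terms equal $t^{\frac12-\frac{d}{2k}}$ times Beta constants, which are finite since $k>d$. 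This gives $\Delta(T_0)\le CT_0^{\frac12-\frac d{2k}}\Delta(T_0)$, so $\Delta\equiv0$ for small $T_0$.

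\textbf{Main obstacle.} The main obstacle is that the constant $C$ above also involves $\sup_s s^{d/(2k)}\|\mu_s\|_{k^*}$, which is finite but a priori not small. In addition, the bound must be propagated past $T_0$, where the weights are no longer anchored at the initial time. For the first issue, I would first show that $\sup_{s\le T_0}s^{d/(2k)}\|\mu_s\|_{k^*}$ is controlled by the linear part plus a small multiple of itself, using the same Beta-integral estimate. If that fails, I would instead exploit $\|\mu_s\|_{k^*}\to$ finite-type bounds after approximating $u_0$, or split near $s=0$ using $\|\mu_s\|_{k^*}\le \|\mu_s\|_{TV}^{\theta}\cdots$. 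Once $\mu=\nu$ on $[0,T_0]$, I would restart at time $T_0$, where $\mu_{T_0}\in L^{k^*}$ and the singularity at the restart time disappears. The iteration steps are uniform because $\|\mu_t\|_{k^*}$ is bounded on $[T_0,T]$, so finitely many steps cover $[0,T]$.

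\textbf{Step 3 (pathwise uniqueness).} With $\mu=\nu$ known, both processes solve the same linear SDE with drift $-F-b_\mu(t,\cdot)$. Here $b_\mu\in L^q_t L^\infty_x$ is locally bounded after time $0$, and $t^{-d/(2k)}$ is integrable. Strong existence and uniqueness for such SDEs with Lipschitz $F$ plus a bounded measurable perturbation then follow from Veretennikov/Krylov--Röckner \cite{KR05} on each $[\varepsilon,T]$. The initial layer is handled by noting that $\int_0^\varepsilon\|b_\mu(s)\|_\infty \dd s\to0$, combined with a Zvonkin transform for integrable-in-time bounded drifts. This gives uniqueness of the strong solution and completes the proof.
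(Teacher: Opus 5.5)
Your proof is correct, and its architecture matches the paper's: existence from \cite{HRW25}, a Duhamel formula for the marginals of any solution in $\mathcal C_T$, a Gr\"onwall-type estimate in $\|\cdot\|_{k^*}$ giving uniqueness of the flow of marginals, then pathwise uniqueness for the frozen linear SDE. Both uniqueness steps are carried out differently, though.

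\textbf{Uniqueness of the flow of marginals.} The paper puts both pieces of the nonlinear difference in $L^1$. For the first piece it uses H\"older and Young: $\|(\mu_s-\nu_s)\,K\ast\mu_s\|_{L^1}\le\|\mu_s-\nu_s\|_{k^*}\|K\ast\mu_s\|_{L^k}\le\|K\|_{L^k}\|\mu_s-\nu_s\|_{k^*}$. Both pieces then carry the kernel $(t-s)^{-\frac12-\frac{d}{2k}}$ with a constant independent of $\mu,\nu$. A single Volterra--Gr\"onwall on $[0,T]$ concludes, with no weight, no small time and no restart. It is your $L^\infty$ bound on $b_\mu$ that produces the factor $s^{-\frac{d}{2k}}$, and hence the weighted norm and the restart. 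Your version still closes, but the ``main obstacle'' you describe is not one. Your $C$ depends on the fixed finite number $\sup_{s\le T}s^{\frac{d}{2k}}\|\mu_s\|_{k^*}$, and $\Delta(T_0)<\infty$ because both flows lie in $\mathcal C_T$. So you simply choose $T_0$ with $C\,T_0^{\frac12-\frac{d}{2k}}<1$, and the fallback ideas can be deleted. In the Duhamel formula the divergence should sit inside, $\hat P^*_{t-s}\nabla\cdot(\mu_s b_\mu(s))$; the two orders do not commute in general, but the bounds you use are those of the correct operator.

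\textbf{Pathwise uniqueness.} The paper uses the time-integrable Lipschitz bound of Proposition~\ref{prop:lip-bound} on $K\ast u_t$ and a synchronous-coupling Gr\"onwall. You instead invoke strong uniqueness for the frozen SDE from \cite{KR05}, which is essentially the paper's Lemma~\ref{lem:decoupled-wp}.
\begin{itemize}
\item Your route is lighter. It only needs $\|K\ast\mu_t\|_{L^\infty}\lesssim t^{-\frac{d}{2k}}\in L^q(0,T)$ for some $q\in(2,\frac{2k}{d})$, not the regularity theory of Section~\ref{sec:pde}. Note that Proposition~\ref{prop:lip-bound} is stated for $u_0\in\mathcal P_2(\R^d)$, whereas here $u_0$ is arbitrary.
\item The paper's route reuses an estimate it needs anyway for the main theorems.
\item Drop the ``initial layer'' discussion. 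The condition $\frac{d}{p}+\frac{2}{q}<1$ with $p=\infty$ holds on all of $[0,T]$, so \cite{KR05} applies from time $0$.
\item Arguing on $[\varepsilon,T]$ would presuppose $X_\varepsilon=Y_\varepsilon$. Moreover, smallness of $\int_0^\varepsilon\|b_\mu(s)\|_{L^\infty}\,\dd s$ is not what a Zvonkin-type argument needs; $q>2$ is.
\end{itemize}

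\textbf{A caveat you share with the paper.} A moment estimate yields $\E\int_0^T|F(X_s)|\,\dd s<\infty$ only when $u_0\in\mathcal P_1(\R^d)$. For coercive $F$ and $u_0$ without first moment this expectation is infinite. Condition (i) of Definition~\ref{def-mckean} therefore has to be read pathwise for the statement to cover every $u_0\in\mathcal P(\R^d)$.
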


\begin{remark}
    Since for $k \in (d,\infty)$ and for all $t>0$, the law of $X_t$ satisfies the following bound 
    \[ \sup_{f \in L^k(\R^d), 
    \|f\|_{L^k} \leq 1} |\langle \mathcal{L}_{X_t}, f \rangle| 
     < \infty,\]
    by the Riesz representation theorem
     it means that $\mathcal{L}_{X_t}$ has a density $u_t \in L^{k^*}(\R^d)$ 
    with respect to the Lebesgue measure. For 
    $k = \infty$, the kernel is bounded and $\mathcal{L}_{X_t}$ has a density $u_t \in L^1(\R^d)$ for all $t>0$.
   In the following, we identify the 
measure $\mathcal{L}_{X_t}$ with its density $u_t$, and we use the
notation $u_t$ for both. 
 \end{remark}

We recall that the system of $N$ mollified 
interacting particles is defined by \eqref{particle-system-intro},
with $(W^i)_{i \in \N}$ independent standard Brownian motions 
on a complete filtered probability space \((\Omega, \mathcal{F},
(\mathcal{F}_t)_{t \geq 0}, \mathbb{P})\) and
 $(V^N)_{N \in \N}$ a sequence of 
mollifiers defined by \[V^N(\cdot)\coloneqq N^{d \alpha}
 V(N^\alpha \cdot),\] for some 
$\alpha > 0$ and $V$ a smooth probability density with compact support 
on $\R^d$. 
The drift coefficients of the system of SDEs \eqref{particle-system-intro} being regular thanks to the mollification, the strong well-posedness of the particle system is ensured for any initial condition.

In all the following, we denote by $k$ the integrability parameter of the interaction kernel $K$ defined 
in Assumption \ref{assum:potentials}(ii). The assumptions 
on the mollification parameter $\alpha$ and on the initial 
conditions $(X_{0}^{1,N},\dots, X_{0}^{N,N})$ are detailed below.
\begin{assumptions}\label{assum:alpha}
The parameter $\alpha$ satisfies $ 0<\alpha<\frac{1}{d \vee 2}$.
\end{assumptions}

\begin{assumptions}\label{assum:initial} The initial conditions 
    verify
     $u_0 \in \mathcal{P}_2(\mathbb{R}^d),\;
\mathcal{L}_{X_0^N} \in \mathcal{P}_2(\mathbb{R}^{dN}),$ and we denote 
\(P \coloneqq  \int_{\R^d} |x|^2 u_0(\dd x)\).
Moreover, if $k>2$,
for $q = 2 \vee (d+1)(1-\frac{2}{k})$, assume that
\[ Q \coloneqq  \sup_{N \in \N} \max_{1 \leq i \leq N} \E[ |X_0^{i,N}|^q] < \infty.\]
\end{assumptions}

We can now state the first main result of this work, 
which gives a rate in the uniform-in-time propagation of 
chaos on the law of the particle system 
\eqref{particle-system-intro}
towards the solution of the nonlinear PDE~\eqref{nlfp-intro}.
\begin{theorem}
\label{thm:main}
Consider the mollified interacting particle system \eqref{particle-system-intro}.
Suppose that Assumptions~\ref{assum:potentials},~\ref{assum:alpha} and~\ref{assum:initial} hold.
Then there exists a constant
\(M=M(d,k,F)\in(0,1)\)
such that whenever
\(\|K\|_{L^k}\leq M,\)
for any $\varepsilon>0$, there exist constants
\(C=C(d,k,F,\alpha,V,P,Q,\varepsilon)>0\) and
\(c=c(d,k,F,M)>0\)
such that for any $t\geq0$ and any $N\geq1$,
\[\mathbb{W}_2^N\bigl(\mathcal{L}_{X_t^N},u_t^{\otimes N}\bigr)
\leq C\Big(e^{-ct}\mathbb{W}_2^N\bigl(\mathcal{L}_{X_0^N},u_0^{\otimes N}\bigr)
+N^{-\min\left(\alpha,\frac12-
\frac{d\alpha}{2}-\varepsilon\right)}
\Big).\]

Moreover, denoting by
\(u_t^N\coloneqq V^N\ast \big(\frac1N\sum_{i=1}^N\delta_{X_t^{i,N}} \big)\),
we have, for any $t>0$ and any $N\geq1$,
\begin{align*}
    \left\|\|u_t^N-u_t\|_{k^*}\right\|_{L^2(\PP)}
    &\leq C(1\wedge t)^{-\frac12-\frac{d}{2k}}
    \left( e^{-ct}
    \mathbb W_2^N\left(\mathcal L_{X_0^N},u_0^{\otimes N}\right)
    + N^{-\min\left(\alpha,\frac12-\frac{d\alpha}{2}-\varepsilon\right)}
    \right).
\end{align*}
\end{theorem}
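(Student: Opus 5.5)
The plan is a synchronous coupling combined with a mild (Duhamel) formulation for the mollified empirical measure, and to close two estimates at once. Take $N$ copies $X^i$ of the McKean--Vlasov process~\eqref{mckean-vlasov}, driven by the same Brownian motions $W^i$ as the particles. Their initial data $(X_0^{i,N},X_0^i)$ come from an optimal coupling for $\mathbb W_2^N(\mathcal L_{X_0^N},u_0^{\otimes N})$, with the $X_0^i$ i.i.d.\ of law $u_0$. Write $\Delta_t\coloneqq \big(\frac1N\sum_i \E|X_t^{i,N}-X_t^i|^2\big)^{1/2}$, which dominates $\mathbb W_2^N(\mathcal L_{X_t^N},u_t^{\otimes N})$. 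Itô's formula and the contractivity of $F$ (Assumption~\ref{assum:potentials}(i)) give
\[
\frac{\dd}{\dd t}\Delta_t^2\le -2\lambda\Delta_t^2+\frac2N\sum_i\E\big[|X^{i,N}_t-X^i_t|\,|K\ast u^N_t(X^{i,N}_t)-K\ast u_t(X^i_t)|\big].
\]
We split the drift difference as $K\ast(u^N_t-u_t)(X^{i,N}_t)+\big(K\ast u_t(X^{i,N}_t)-K\ast u_t(X^i_t)\big)$. By Young/Hölder, the first piece is at most $\|K\|_{L^k}\|u_t^N-u_t\|_{k^*}$. The second is at most $\|\nabla K\ast u_t\|_\infty|X^{i,N}_t-X^i_t|$, and $\|\nabla (K\ast u_t)\|_\infty\le \|K\|_{L^k}\|\nabla u_t\|_{k^*}$ is controlled by the regularity estimates of Section~\ref{sec:pde}, namely a Lipschitz bound of the form $\sup_t (1\wedge t)^{\frac12+\frac d{2k}}\|\nabla u_t\|_{k^*}<\infty$. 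Because of the small factor $\|K\|_{L^k}\le M$, this bound is dominated by $\lambda$ for large $t$. Near $t=0$ the singularity $t^{-\frac12-\frac d{2k}}$ is integrable since $k>d$, so a Grönwall argument still applies.

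The second step estimates $\|u_t^N-u_t\|_{k^*}$ through the mild formulation. Let $P_t$ be the semigroup of $\Delta+\nabla\cdot(F\,\cdot)$. Itô's formula applied to $V^N(x-X^{i,N}_t)$ gives
\[
u_t^N=P_tu^N_0+\int_0^t\nabla P_{t-s}\cdot\big(u_s^N\,K\ast u_s^N\big)\dd s+\text{(commutator)}+M_t^N,
\]
with $u_t=P_tu_0+\int_0^t\nabla P_{t-s}\cdot(u_s K\ast u_s)\dd s$. The micro--macro commutator term comes from the fact that $V^N\ast(\mu K\ast u^N)\neq u^N K\ast u^N$, and similarly for $F$. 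By Section~\ref{sec:micro-macro} it is $O(N^{-\alpha})$. The stochastic convolution $M^N_t$ is bounded in $L^2(\PP;L^{k^*})$ by $N^{-\frac12+\frac{d\alpha}2+\varepsilon}$ uniformly in time, using Section~\ref{sec:int-sto} and the moment bound from Assumption~\ref{assum:initial}. We also need the smoothing estimate $\|\nabla P_t f\|_{k^*}\lesssim (1\wedge t)^{-1/2}e^{-ct}\|f\|_{k^*}$ for mean-zero data, which is where exponential contractivity of $F$ enters. With it, the nonlinear difference is bounded by
\[
\int_0^t(1\wedge(t-s))^{-\frac12}e^{-c(t-s)}\|K\|_{L^k}\big(\|u^N_s-u_s\|_{k^*}+\|u_s\|_{k^*}\ldots\big)\dd s.
\]
The initial term $P_t(u^N_0-u_0)$ is compared in $k^*$-norm with a loss $t^{-\frac12-\frac d{2k}}$, going through a $W_1$ bound and $\mathbb W_2^N$ at time $0$ and using decay in time. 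More robustly, one can write $u^N_t-u_t=(u^N_t-V^N\ast\bar\mu_t)+(V^N\ast\bar\mu_t-u_t)$, where $\bar\mu_t$ is the empirical measure of the i.i.d.\ copies. The second piece has the i.i.d.\ rate, and the first is bounded by the Wasserstein coupling through the Duhamel smoothing. This cross-dependence is why the two quantities are closed simultaneously.

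The final step sets
\[
g(t)\coloneqq \sup_{s\le t}e^{cs}\Big(\Delta_s+(1\wedge s)^{\frac12+\frac d{2k}}\big\|\|u^N_s-u_s\|_{k^*}\big\|_{L^2(\PP)}\Big)
\]
and derives $g(t)\le C(\Delta_0+e^{ct}N^{-r})+C'\|K\|_{L^k}g(t)$, where $r$ is the rate in the statement. The $\|K\|_{L^k}$ factor appears in every cross term, so choosing $M$ small absorbs it and gives both claimed bounds. The main obstacle I expect is the time-uniform weighted convolution estimate: one must show that integrals such as $\int_0^t (t-s)^{-1/2}e^{-c(t-s)}(1\wedge s)^{-\frac12-\frac d{2k}}e^{-cs}\dd s$ are bounded by $Ce^{-ct}(1\wedge t)^{-\frac d{2k}}$ with a constant independent of $t$. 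This requires $\frac12+\frac d{2k}<1$, i.e.\ $k>d$, and uses a spectral-gap-type decay of $P_t$ on mean-zero $L^{k^*}$ data with rate uniform in $t$. I would first try to obtain that decay by synchronous coupling for the linear SDE $\dd Z=-F(Z)\dd t+\sqrt2\,\dd W$, combined with short-time heat-kernel smoothing.
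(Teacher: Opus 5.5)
Your ingredients are the paper's: synchronous coupling, the Lipschitz bound of Proposition~\ref{prop:lip-bound}, a Duhamel formula for $u^N_t$ around $\hat P$, and smallness of $\|K\|_{L^k}$ to absorb cross terms. You obtain time-uniformity differently, though, and two of your stated steps would fail as written.

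\textbf{How the paper gets time-uniformity.} It uses no long-time decay of $\hat P$. Every estimate is proved on $(0,1]$ with constants independent of the initial data, and a Volterra--Grönwall constant $e^{C_0\|K\|_{L^k}}$ is tolerated. This gives a one-step bound with factor $\mathcal A<1$, which is iterated through the Markov property with a fresh optimal coupling at each integer time.

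\textbf{First failing step: the semigroup decay.} Your exponentially weighted global Grönwall needs long-time decay, and the form you postulate is false. There is no spectral gap for $P_t=\hat P_t^*$ on mean-zero data in unweighted $L^{k^*}(\R^d)$. Take $F(x)=\lambda x$, $t\geq1$, and $f=g(\cdot-a)-g(\cdot+a)$ with $e^{-\lambda t}|a|=R\to\infty$. Then $P_tf$ and $\nabla P_tf$ are two disjoint unit-scale bumps whose $L^{k^*}$ norms stay of order one, while $\|f\|_{k^*}\le 2\|g\|_{k^*}$. What does hold is the following pair of estimates:
\[
\|\hat P^*_\tau\nabla\cdot h\|_{k^*}\le C(1\wedge\tau)^{-\frac12-\frac d{2k}}e^{-\lambda\tau}\|h\|_{L^1},\qquad \|\hat P^*_\tau(\mu-\nu)\|_{k^*}\le C(1\wedge\tau)^{-\frac12-\frac d{2k}}e^{-\lambda\tau}\,\mathbb W_1(\mu,\nu).
\]
They follow from $\|\nabla\hat P_s\psi\|_{L^\infty}\le e^{-\lambda s}\|\nabla\psi\|_{L^\infty}$ (synchronous coupling) plus Lemma~\ref{lem:OU-bounds} on the last unit of time. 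They suffice, provided you estimate every Duhamel term with $L^1$ data as in Section~\ref{sec:micro-macro}, not in $L^{k^*}$.

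\textbf{Second failing step: the commutator.} The term $V^N\ast(\mu^N_s\,K\ast u^N_s)-u^N_s\,K\ast u^N_s$ cannot be bounded by $CN^{-\alpha}$ a priori. Its $L^1$ norm is governed by the oscillation of $K\ast u^N_s$ at scale $N^{-\alpha}$. The only available Lipschitz bound is $\|K\|_{L^k}\|\nabla V^N\|_{L^{k^*}}\sim N^{\alpha(1+\frac dk)}$, which yields $N^{\frac{d\alpha}k}$. Section~\ref{sec:micro-macro} does not prove your claim. It pivots on $K\ast u_s$ (the terms $F^{1,N},F^{2,N},F^{3,N}$), whose integrable Lipschitz constant makes the commutator $O(s^{-\frac12-\frac d{2k}}N^{-\alpha})$. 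The pivot leaves two terms with $L^1$ norm at most $\|K\|_{L^k}\|u^N_s-u_s\|_{k^*}$, and this is exactly where your absorbable factor $\|K\|_{L^k}$ comes from.

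\textbf{Smaller points.}
\begin{itemize}
\item Proposition~\ref{lemma:stochastic-integral} covers only $t\le 1$; the paper restarts the stochastic convolution at each integer time. Your global formula integrates over the whole history, so you need decay of $\|(1+|\cdot|)^{\frac{d+1}q}\hat P^*_\tau\nabla V^N(\cdot-a)\|_{L^2}$ for $\tau\ge1$. For $k<2$ that bound is also coupled with $\|u^N_s-u_s\|_{k^*}$ and must be fed into your $g$.
\item With the same rate $c$ in kernel and weight, the integral you single out grows like $\sqrt t\,e^{-ct}$; take $c<\lambda$.
\item The aside via $V^N\ast\bar\mu_t$ loses powers of $N$, since $\|V^N\ast(\mu-\nu)\|_{k^*}$ is only controlled by $\|\nabla V^N\|_{L^{k^*}}\mathbb W_1(\mu,\nu)$.
\end{itemize}

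\textbf{What each route buys.} With these repairs your argument closes with a single coupling and no iteration, giving essentially Corollary~\ref{cor:pathwise-poc} directly. The paper's route needs only finite-horizon heat-kernel bounds plus constants uniform in the initial condition, and so avoids all long-time semigroup and stochastic-convolution estimates.
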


The proof of Theorem~\ref{thm:main} will be carried out in Section~\ref{sec:main-res} and the strategy of proof is explained in Subsection~\ref{sec:strategy-proof}.

\begin{remark} 
Ignoring the arbitrarily small loss $\varepsilon$, the exponent
\(\min\left(\alpha,\frac12-\frac{d\alpha}{2}\right)\)
is maximized for
\(\alpha^\ast=\frac{1}{d+2}.\)
Then for $\alpha^\ast=\frac1{d+2}$, the corresponding
rate is $N^{-\frac1{d+2}+\varepsilon}$.
\end{remark}

\begin{remark}
\label{rk:smallnessK}
More precisely, the parameter $M$ depends on $F$ only through the Lipschitz constant $|F|_{Lip}$ and the coercivity constant $\lambda$, so that $M\equiv M(d,k,\lambda,|F|_{Lip})$. 
The condition $\|K\|_{L^k}\le M<1$ is not a fundamental restriction: it comes from working on unit time intervals. Performing a space-time rescaling $Z^\lambda_s \coloneqq \lambda^{1/2}X_{s/\lambda}$ (this works also for $X^{i,N}$, but we omit the details) shows that Theorem~\ref{thm:main} holds whenever
$\|K\|_{L^k}\le \lambda^{\frac12(1-\frac dk)}\,M(d,k,1,|F|_{Lip}/\lambda)$. Hence if $F$ is well conditioned so that $|F|_{Lip}/\lambda \sim 1$, choosing $\lambda$ large enough allows for larger values of $\|K\|_{L^k}$.
\end{remark}

For the non-mollified interacting particle system \eqref{particle-system-intro-gen}, we have the following similar result.

\begin{theorem}\label{thm:main-non-mollified}
Consider the non-mollified interacting particle system \eqref{particle-system-intro-gen}.
Suppose that Assumptions~\ref{assum:potentials},
\ref{assum:alpha}, \ref{assum:initial} hold and that $k \in(2\vee d,\infty]$.
Then there exists a constant
\(M=M(d,k,F)\in(0,1)\)
such that whenever
\(\|K\|_{L^k}\leq M,\)
for any $\varepsilon>0$, there exist constants
\(C=C(d,k,F,\alpha,V,P,Q,\varepsilon)>0\) and
\(c=c(d,k,F,M)>0\)
such that for any $t\geq0$ and any $N\geq1$,
\[\mathbb{W}_2^N\bigl(\mathcal{L}_{Y_t^N},u_t^{\otimes N}\bigr)
\leq
C\Big(e^{-ct}
\mathbb{W}_2^N\bigl(
\mathcal{L}_{Y_0^N},u_0^{\otimes N}
\bigr)
+N^{-\min\left(\alpha,\frac12-
\frac{d\alpha}{2}-\varepsilon\right)}
+\|K-K\ast V^N\|_{L^k}
\Big).\]

Moreover, denoting by
\(v_t^N\coloneqq V^N\ast \big(\frac1N\sum_{i=1}^N\delta_{Y_t^{i,N}} \big)\), we have for any $t>0$ and any $N\geq1$,
\begin{align*}
    \left\|\|v_t^N-u_t\|_{k^*}\right\|_{L^2(\PP)}
    &\leq
    C(1\wedge t)^{-\frac12-\frac{d}{2k}}
    \left(
    e^{-ct}
    \mathbb W_2^N\left(\mathcal L_{Y_0^N},u_0^{\otimes N}\right)
    +
    N^{-\min\left(\alpha,\frac12-\frac{d\alpha}{2}
    -\varepsilon\right)}
    +\|K-K\ast V^N\|_{L^k}
    \right).
\end{align*}
\end{theorem}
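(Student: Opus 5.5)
The plan is to follow the proof of Theorem~\ref{thm:main}: set up a coupling and close a pair of estimates simultaneously, one in $L^2(\PP;L^{k^*})$ for the smoothed empirical measure and one in $\mathbb W_2^N$. The new feature is the singular drift. For each $i$ we introduce a copy $\bar Y^i$ of the McKean--Vlasov equation \eqref{mckean-vlasov}, driven by the same Brownian motion $W^i$, with initial conditions $(Y_0^{i,N},\bar Y^i_0)$ given by an optimal $\mathbb W_2^N$-coupling of $\mathcal L_{Y_0^N}$ and $u_0^{\otimes N}$. We then split the particle drift as
\[
\frac1N\sum_j K(Y^i-Y^j)=\frac1N\sum_j K\ast V^N(Y^i-Y^j)+\frac1N\sum_j (K-K\ast V^N)(Y^i-Y^j).
\]
The first term equals $K\ast v^N_t(Y^i)$. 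It is handled exactly as in the moderate case, using the Lipschitz bound of $K\ast u_t$ from Section~\ref{sec:pde} to give $|K\ast u_t(Y^i)-K\ast u_t(\bar Y^i)|\lesssim t^{-\frac12-\frac d{2k}}|Y^i-\bar Y^i|$ for small $t$, and the bound $|K\ast(v^N_t-u_t)(Y^i)|\le \|K\|_{L^k}\|v_t^N-u_t\|_{k^*}$.

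Applying Itô's formula to $e^{2ct}\frac1N\sum_i|Y^i_t-\bar Y^i_t|^2$ and using the contractivity of $F$ (Assumption~\ref{assum:potentials}(i)) gives a Grönwall-type inequality with three sources. The first is the initial error. The second is $\|K\|_{L^k}\,\|\|v^N_s-u_s\|_{k^*}\|_{L^2(\PP)}$. The third is the remainder $R_s:=\E\big|\frac1N\sum_j (K-K\ast V^N)(Y^i_s-Y^j_s)\big|^2$. For the $L^{k^*}$ estimate we write the Duhamel (mild) formulation of $v^N$ against the heat/Ornstein--Uhlenbeck-type semigroup, as in Sections~\ref{sec:micro-macro}--\ref{sec:int-sto}. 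The only change from the moderate case is an additional drift term $\nabla\cdot\big(V^N\ast(\mu^N_s\,\frac1N\sum_j(K-K\ast V^N)(\cdot-Y^j_s))\big)$. Its $\|\cdot\|_{k^*}$ norm after the semigroup costs $(t-s)^{-1/2}$ times an $L^1$-type quantity controlled by $\sqrt{R_s}$. The stochastic-integral and micro--macro terms are estimated exactly as before and give $N^{-\min(\alpha,\frac12-\frac{d\alpha}2-\varepsilon)}$. Both estimates are then closed on unit time intervals: the smallness $\|K\|_{L^k}\le M$ makes the cross-terms contractive, and iterating over $[n,n+1]$ with the $e^{-ct}$ factor yields uniformity in time, as in Section~\ref{sec:main-res}.

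The main obstacle is bounding $R_s$ uniformly in time by $\|K-K\ast V^N\|_{L^k}^2$ with no $N$-dependent blow-up. Write $G:=K-K\ast V^N$. The diagonal terms $j=i$ contribute only $G(0)/N$, which is harmless after the usual convention, or removable since $K(0)$ is irrelevant. The off-diagonal terms are bounded by $\sup_{i\ne j}\E|G(Y^i_s-Y^j_s)|^2$. This requires an $L^{(k/2)^*}$ bound, uniform in $N$ and $s\ge s_0$, on the density of the pair $(Y^i_s,Y^j_s)$ (or of the difference $Y^i_s-Y^j_s$). This is where $k>2$ enters, since $|G|^2\in L^{k/2}$ must be integrated against the two-particle marginal.

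To obtain this bound we will use the partial Girsanov transform of Appendix~\ref{annex:part-girsanov}. On a window $[s-1,s]$ we remove the interaction drift acting on particles $i,j$ only. Under the new measure these two particles are independent diffusions driven by $-F$ plus a drift that is bounded in the appropriate $L^k$-mixed sense by $\|K\|_{L^k}$ times the mean-field density. Their joint law is then given by a heat-kernel-type bound, with time-$1$ Gaussian regularity. The Girsanov density has moments controlled by Krylov-type estimates via Khasminskii's lemma, and these estimates use $k>d$ and the smallness of $K$. For $s<1$ the same argument produces a factor $s^{-\gamma}$, which is integrable in the Duhamel/Grönwall formulas. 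The moment assumption $Q$ (Assumption~\ref{assum:initial}) is used to control the confinement on the window.

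Once $\sup_{i\ne j}\E|G(Y^i_s-Y^j_s)|^2\lesssim (1\wedge s)^{-\gamma}\|G\|_{L^k}^2$ is established, inserting it into both estimates and closing the loop gives the two stated bounds with the extra term $\|K-K\ast V^N\|_{L^k}$.
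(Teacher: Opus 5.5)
Your architecture matches the paper's: the pivot $K\ast v^N_t$, the extra Duhamel term driven by $(K-K\ast V^N)\ast\mu^N_s$, a two-particle density bound via partial Girsanov, and iteration on unit windows. However, the claim that the stochastic-integral term is ``estimated exactly as before'' is a genuine gap.

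\textbf{Missing moment bound.} The only input on the particle dynamics in Proposition~\ref{lemma:stochastic-integral} is the moment bound $\sup_{N,i,t}\E|Y^{i,N}_t|^m$, with $m=2\vee(d+1)(1-\frac2k)$, uniform in time and in $N$. Lemma~\ref{lem:uniform_bounds} obtains this by bounding the interaction through $\|K\ast u^N_s\|_{L^\infty}$, using the mild formulation and Lemma~\ref{lem:uniform_convolution}. For the non-mollified system the analogous quantity is $\|K\ast\mu^N_s\|_{L^\infty}=\sup_x|\frac1N\sum_j K(x-Y^{j,N}_s)|$, which is infinite whenever $K$ is unbounded. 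So that lemma does not transfer, and a new argument is required.

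The paper decouples a single particle ($r=1$ in Lemma~\ref{lem:martingale-bounds}). For the system started from $x$, this gives $(\E|Y^{1,N}_1|^m)^{1/m}\le\|Z^{1,N}_1\|_{L^p(\hat{\mathbb P})}^{1/m}(e^{-\lambda}|x_1|+C)$. Interpolating between $L^1$ and $L^{p_0}$ shows $\sup_{N,x}\|Z^{1,N}_1\|_{L^p(\hat{\mathbb P})}\to1$ as $p\downarrow1$. One can therefore choose $p$ with $\rho=e^{-\lambda}\sup_{N,x}\|Z^{1,N}_1\|_{L^p(\hat{\mathbb P})}^{1/m}<1$. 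This contraction is what makes the window-by-window iteration of moments uniform in time.

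An alternative would be It\^o's formula for $|Y^{i,N}|^m$, with the interaction controlled by your own two-particle density bound applied on each window. Either way some argument is needed, and this is where $Q$ from Assumption~\ref{assum:initial} actually enters.

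\textbf{Girsanov step.} Your description of the transform is not what Appendix~\ref{annex:part-girsanov} does, and the difference matters. If you only remove the drift acting on particles $i,j$ while the other particles still feel them, then $\widehat Y^{i},\widehat Y^{j}$ are no longer independent of the remaining particles. That independence is exactly what allows one to condition on the other trajectories and apply Lemma~\ref{lem:exponential-OU}, which controls the Girsanov density uniformly in $N$. Moreover, a residual drift $\frac1N\sum_{l\neq i,j}K(Y^i-Y^l)$ is an empirical interaction and is not bounded by any norm of a mean-field density.

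In \eqref{eq:auxiliary-particle-system} the decoupling is two-sided: particles $1,\dots,r$ feel only $-F$, and the others interact only among themselves. Under $\hat{\mathbb P}$ the pair is then two independent generalized Ornstein--Uhlenbeck processes with no extra drift. Consequently \eqref{eq:heat-like-estimate} holds uniformly over initial configurations for $s\in(0,1]$, so neither $Q$ nor smallness of $\|K\|_{L^k}$ is needed at this step. The Markov property handles later windows.

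\textbf{Smaller corrections.} The diagonal term is not removable: $G(0)=-K\ast V^N(0)$ can be of size $\|K\|_{L^k}\|V^N\|_{L^{k^*}}\sim N^{d\alpha/k}$. It is harmless only because it carries a factor $N^{-1}$ (or $N^{-1/2}$ after Jensen, as in the paper) and $d\alpha/k<d\alpha/2$. Also, the semigroup cost in the extra Duhamel term is $(t-s)^{-\frac12-\frac d{2k}}$, coming from $\|\nabla\hat P_{t-s}\|_{L^k\to L^\infty}$, not $(t-s)^{-1/2}$.
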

The proof of Theorem~\ref{thm:main-non-mollified} is 
carried out in Section~\ref{sec:non-mollified} and the strategy of proof, similar to that of Theorem~\ref{thm:main}, is explained in Subsection~\ref{sec:strategy-proof}.

\begin{remark}\label{k-infty} 
Under the assumptions of Theorem~\ref{thm:main-non-mollified}, with
$k \in (2 \vee d, \infty)$, the term $\|K - K \ast V^N\|_{L^k}$ tends to zero as $N \to \infty$. The theorem therefore yields uniform-in-time propagation of chaos, with quantitative rates available under additional assumptions on $K$. 
However, for a general $K \in L^\infty$, without any additional hypothesis (continuity for example) it is not true that 
$\|K - K \ast V^N\|_{L^\infty}$ converges to $0$ as $N$ goes to $\infty$.
We believe that the proof
in Section~\ref{sec:non-mollified} could be adapted to
replace the $L^\infty$ norm with a weaker one in which
$K - K \ast V^N$ converges to zero as $N \to \infty$, but we do not pursue this direction to keep the paper focused on the above results.
\end{remark}

The following corollary is deduced from Theorems~\ref{thm:main} and \ref{thm:main-non-mollified}, and its proof is postponed to Section~\ref{sec:corollary}, after
the proofs of the theorems.

\begin{corollary}
\label{cor:pathwise-poc}
\begin{enumerate}
\item[(i)]Consider the particle system \eqref{particle-system-intro}.
Let $(X^i)_{\{i=1,\ldots,N\}}$ be $N$ i.i.d strong solutions of the McKean-Vlasov SDE \eqref{mckean-vlasov}, constructed on the same
probability space and driven by the same Brownian motions
$(W^i)_{\{i=1,\ldots,N\}}$ as the corresponding particles
$(X^{i,N})_{\{i=1,\ldots,N\}}$ and with initial law $u_0$.

Under the assumptions of Theorem~\ref{thm:main}, for any
$\varepsilon>0$ there exist constants $C,c>0$ such that for any
$t\geq0$ and any $N\geq1$,
\[\Big(\frac1N\sum_{i=1}^N \E\left[|X_t^{i,N}-X_t^i|^2\right]\Big)^{\frac12}
\leq C\bigg(e^{-ct}\Big(\frac1N\sum_{i=1}^N
\E\left[|X_0^{i,N}-X_0^i|^2\right]\Big)^{\frac12}
+ N^{-\min\left(\alpha,\frac12-\frac{d\alpha}{2}-\varepsilon\right)}\bigg).\]

\item[(ii)] Consider the particle system \eqref{particle-system-intro-gen}.
Let $(X^i)_{\{i=1,\ldots,N\}}$ be $N$ i.i.d strong solutions of the McKean-Vlasov SDE \eqref{mckean-vlasov}, constructed on the same
probability space and driven by the same Brownian motions
$(W^i)_{\{i=1,\ldots,N\}}$ as the corresponding particles
$(Y^{i,N})_{\{i=1,\ldots,N\}}$ and with initial law $u_0$.

Under the assumptions of Theorem~\ref{thm:main-non-mollified}, for any $\varepsilon>0$ there exist constants $C,c>0$ such that for any
$t\geq0$ and any $N\geq1$,
 \begin{align*}
 \bigg(\frac1N\sum_{i=1}^N \E\left[|Y_t^{i,N}-X_t^i|^2\right]\bigg)^{\frac12}
 &\leq C\biggl(e^{-ct}
 \Big(\frac1N\sum_{i=1}^N \E\left[|Y_0^{i,N}-X_0^i|^2\right]\Big)^{\frac12}\\
 &\quad +N^{-\min\left(\alpha,\frac12-\frac{d\alpha}{2}-\varepsilon\right)}+\|K-K*V^N\|_{L^k}\biggr).
 \end{align*}
\end{enumerate}
\end{corollary}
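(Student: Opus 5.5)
Corollary~\ref{cor:pathwise-poc} is a pathwise (synchronous) coupling estimate, so I will not pass through the Wasserstein bound of Theorem~\ref{thm:main} as a black box. Instead I will reuse its second conclusion, the $L^2(\PP)$ control of $\|u^N_t-u_t\|_{k^*}$, as an input in a direct synchronous-coupling argument. For part (i), set $e^i_t\coloneqq X^{i,N}_t-X^i_t$. The Brownian motions cancel, so $e^i$ has finite variation and
\[
\frac{\dd}{\dd t}|e^i_t|^2=-2\langle F(X^{i,N}_t)-F(X^i_t),e^i_t\rangle-2\langle (K\ast u^N_t)(X^{i,N}_t)-(K\ast u_t)(X^i_t),e^i_t\rangle .
\]
The first term is bounded by $-2\lambda|e^i_t|^2$ by Assumption~\ref{assum:potentials}(i). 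I split the interaction difference (using $K\ast V^N\ast\mu^N=K\ast u^N$) into two pieces:
\[
\big[(K\ast u^N_t)-(K\ast u_t)\big](X^{i,N}_t)+\big[(K\ast u_t)(X^{i,N}_t)-(K\ast u_t)(X^i_t)\big].
\]
The first piece is bounded uniformly in space by $\|K\|_{L^k}\|u^N_t-u_t\|_{k^*}$ (Hölder). The second piece is bounded by $\|\nabla(K\ast u_t)\|_{L^\infty}|e^i_t|$. For the latter I will invoke the Lipschitz bound on $K\ast u_t$ from Section~\ref{sec:pde} (Proposition~\ref{prop:lip-bound}), which presumably gives $\|\nabla (K\ast u_t)\|_\infty\lesssim \|K\|_{L^k}(1\wedge t)^{-\frac12-\frac d{2k}}$. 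Since $\|K\|_{L^k}\le M$ is small, for $t\ge1$ this is at most $\lambda/2$.

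Averaging over $i$ and setting $D_t\coloneqq(\frac1N\sum_i\E|e^i_t|^2)^{1/2}$, Cauchy--Schwarz gives, in a Grönwall-type differential inequality,
\[
\frac{\dd}{\dd t}D_t\le -(\lambda-\|\nabla(K\ast u_t)\|_\infty)D_t+\|K\|_{L^k}\big\|\|u^N_t-u_t\|_{k^*}\big\|_{L^2(\PP)} .
\]
I then insert the second estimate of Theorem~\ref{thm:main}, with $\mathbb W_2^N(\mathcal L_{X^N_0},u_0^{\otimes N})\le D_0$, since the initial coupling is admissible.

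The main obstacle is the small-time singularity $(1\wedge t)^{-\frac12-\frac d{2k}}$, which appears both in the Lipschitz constant and in the forcing term. The exponent $\frac12+\frac d{2k}<1$ since $k>d$, so it is integrable at $0$. On $[0,1]$ I therefore use the integrating factor $\exp(\int_0^t\|\nabla(K\ast u_s)\|_\infty\dd s)$, which is bounded by a constant, together with the integrable forcing, to get $D_t\le C(D_0+N^{-\min(\alpha,\frac12-\frac{d\alpha}2-\varepsilon)})$ for $t\le1$. For $t\ge1$ the effective contraction rate is at least $\lambda/2$, and Duhamel gives
\[
D_t\le e^{-\frac\lambda2(t-1)}D_1+C\int_1^t e^{-\frac\lambda2(t-s)}\big(e^{-cs}D_0+N^{-\min(\cdot)}\big)\dd s .
\]
This yields the claim after possibly reducing $c$.

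For part (ii) the structure is identical with $Y^{i,N}$, but the drift of the particle $Y^{i,N}$ is $\frac1N\sum_jK(Y^i-Y^j)$ rather than $K\ast v^N_t(Y^i)$. I write the difference as $\frac1N\sum_j (K-K\ast V^N)(Y^i-Y^j)+(K\ast v^N_t-K\ast u_t)(Y^i)$. The second term is handled exactly as above using Theorem~\ref{thm:main-non-mollified}. The first term requires bounding $\E|(K-K\ast V^N)(Y^i_t-Y^j_t)|^2$ by $C(1\wedge t)^{-\gamma}\|K-K\ast V^N\|_{L^k}^2$ with $\gamma<2$. This is where the two-particle density estimates obtained via partial Girsanov in Section~\ref{sec:non-mollified} and Appendix~\ref{annex:part-girsanov} enter: they give $L^{(k/2)^*}$ bounds on the law of $Y^i-Y^j$, using $k>2$. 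The diagonal terms $j=i$ contribute $|K-K\ast V^N|(0)/N$, which I would treat with the convention $K(0)=0$ or absorb. I expect this term to be the delicate point, and I would take the needed bound directly from the corresponding step of the proof of Theorem~\ref{thm:main-non-mollified}.
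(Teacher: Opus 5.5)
Your proof is correct, but it is organised differently from the paper's. The paper returns to the one-window estimate \eqref{eq:W2-coupling-1}. It observes that \eqref{L2-diff-2} still holds when $\mathbb W_2^N(\mathcal L_{X_0^N},u_0^{\otimes N})$ is replaced by the initial error $D_0$ of the given (non-optimal) coupling. From this it derives $D_1\le \mathcal A D_0+CN^{-\min(\alpha,\frac12-\frac{d\alpha}2-\varepsilon)}$, with the contraction factor $\mathcal A<1$ from the proof of Theorem~\ref{thm:main}, and iterates over unit intervals using the Markov property. In that proof the contraction is carried by $\mathcal A$, which also absorbs the feedback of $D_0$ through $u^N-u$.

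You instead use the second conclusion of Theorem~\ref{thm:main} (resp.\ Theorem~\ref{thm:main-non-mollified}) as a black box. This is legitimate for three reasons:
\begin{itemize}
\item the bound depends only on the law of the particle system;
\item it is already uniform in time;
\item $\mathbb W_2^N(\mathcal L_{X_0^N},u_0^{\otimes N})\le D_0$ for any coupling.
\end{itemize}
The coupling then only needs the confinement to contract. For $t\ge1$, apply Proposition~\ref{prop:lip-bound} on $[t-1,t]$; this is allowed because its constant does not depend on the initial law and $u_{t-1}\in\mathcal P_2$. It gives $\mathrm{Lip}(K\ast u_t)\le C_1\|K\|_{L^k}$, with $C_1$ the constant of that proposition. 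A single Duhamel formula then concludes, with no iteration.

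You do not need any extra smallness. Since $\mathcal A\ge \exp\big(\frac{C_1\|K\|_{L^k}}{\frac12-\frac d{2k}}-\lambda\big)$, the condition $\mathcal A<1$ already forces $C_1\|K\|_{L^k}<\lambda(\frac12-\frac d{2k})<\lambda/2$.

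What each route buys: yours shows the corollary follows from the statements of the theorems plus the monotonicity of $F$, at the price of a decay rate $c'<\min(c,\lambda-C_1\|K\|_{L^k})$. The paper's route reuses its proof machinery directly and keeps the rate coming from $\mathcal A$.

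Two details for part (ii):
\begin{itemize}
\item Your factor $(1\wedge t)^{-\gamma}$ requires \eqref{eq:heat-like-estimate} for $t\ge1$. It extends there by the Markov property, since the estimate is uniform in the initial configuration.
\item With $K(0)=0$, the diagonal term is $N^{-1}|(K\ast V^N)(0)|\le N^{-1}\|K\|_{L^k}\|V^N\|_{L^{k^*}}\lesssim N^{-1+\frac{d\alpha}{k}}$, which is negligible.
\end{itemize}
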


\begin{remark}
As another,  immediate, consequence of Theorem~\ref{thm:main} (respectively
Theorem~\ref{thm:main-non-mollified}), 
we derive a bound for
\[\limsup_{t\to\infty}
\mathbb{W}_2^N\bigl(\mathcal{L}_{X_t^N},u_t^{\otimes N}\bigr)\quad
(\text{respectively for } \limsup_{t\to\infty}
\mathbb{W}_2^N\bigl(\mathcal{L}_{Y_t^N},u_t^{\otimes N}\bigr)).\]
Moreover, since $u_t\to u_\infty$ in $\mathbb{W}_2$ as $t\to\infty$ (see \cite{HW25}), if the mollified (respectively non-mollified) $N$-particle system  converges in $\mathbb{W}_2^N$
to an invariant law $\pi^N$, then we can derive an explicit bound for
\(\mathbb{W}_2^N\bigl(\pi^N,u_\infty^{\otimes N}\bigr).\)
\end{remark}

\begin{example}[Riesz-like singular kernels]
\label{ex:riesz}
Let $d\geq2$, $s\in(0,1)$, and let
$\chi\in C_c^\infty(\R^d)$ be such that $\chi=1$ in a neighborhood
of the origin and consider
\[K(x) \coloneqq a\,\chi(x)\frac{x}{|x|^{s+1}},.\]
Then, for every \(d<k<\frac{d}{s},\)
we have $K\in L^k(\R^d)$. Taking $|a|$ sufficiently small,
the smallness assumption of Theorem~\ref{thm:main-non-mollified}
is satisfied. For a fixed $h \in \R^d$,
splitting $\R^d$ into
$\{|\cdot|\leq |h|\}$ and $\{|\cdot|>|h|\}$, and applying the
mean value theorem on the latter region gives the 
following translation estimate
\(\|K(\cdot-h)-K\|_{L^k}\lesssim |h|^{\frac{d}{k}-s}\).
Consequently, by the definition of $V^N$ and Minkowski's inequality,
\[\|K-K*V^N\|_{L^k}\lesssim N^{-\alpha(\frac{d}{k}-s)}.\]
Plugging this rate into
 Theorem~\ref{thm:main-non-mollified} gives,
for any $\varepsilon>0$,
\[\mathbb W_2^N
\left(\mathcal L(Y_t^N),u_t^{\otimes N}\right)
\leq C\left(e^{-ct}\mathbb W_2^N
\left(\mathcal L(Y_0^N),u_0^{\otimes N}\right)
+ N^{-\min\left(\alpha,
\frac12-\frac{d\alpha}{2}-\varepsilon,
\alpha\left(\frac dk-s\right)\right)}\right).\]
The optimal $\alpha$ is given by
\(\alpha_*=\frac{1}{d+2(\frac dk-s)},\)
which leads to the rate
\(N^{-\frac{\frac dk-s}{d+2(\frac dk-s)}+\varepsilon}.\)
By letting $k$ be arbitrarly close to $d$,
$\frac dk-s$ can be taken arbitrarily close to $1-s$,
and we obtain the following best rate: 
\[N^{-\frac{1-s}{d+2-2s}+\varepsilon}.\]
\end{example}

\subsection{Scheme and organisation of the proof}\label{sec:strategy-proof}

The proof of Theorem~\ref{thm:main} proceeds in several steps.
First, we introduce a synchronous coupling between the particle 
system and the limiting nonlinear process. This coupling 
yields the estimate
\begin{equation}\label{coupling}
\mathbb{W}_2^N \bigl(\mathcal{L}_{X_t^N},u_t^{\otimes N}\bigr)^2
\leq
\frac1N\sum_{i=1}^N \E \big[|X_t^{i,N}- X_t^i|^2\big],
\end{equation}
where $(X_t^i)_{i \in \{1,\ldots,N\}}$ are independent copies of the
limiting process, and $X^i$ has the same driving noise as $X^{i,N}$.

Next, applying It\^o's formula to the coupled dynamics,
restricting to the time interval $t\in(0,1]$
and using the Lipschitz continuity of $K \ast u_t$, with Lipschitz constant $C \|K\|_{L^k} t^{-\frac12-\frac{d}{2k}}$ from Proposition~\ref{prop:lip-bound}, we 
derive for all $i \in \{1,\ldots,N\}$ an estimate for
\(\E \big[|X_t^{i,N}- X_t^i|^2\big]\)
in terms of 
\(\|u_t^N-u_t\|_{k^*}.\) Note that Proposition~\ref{prop:lip-bound} is a key ingredient and its proof uses 
results of \cite{HRW25}.
Precisely, we obtain
 \begin{align}
 \label{eq:scheme} 
 \frac{\dd}{\dd t} |X_t^{i,N} - X_t^i|^2 
 \leq 2(C \|K\|_{L^k} t^{-\frac12-\frac{d}{2k}} -  \lambda) |X_t^{i,N} -
  X_t^i|^2 +
 2 \|K\|_{L^k} \|u_t^N - u_t\|_{k^*} |X_t^{i,N} - X_t^i|.
 \end{align}

In order to control \(\|u_t^N-u_t\|_{k^*}\), we exploit the mild formulations 
 satisfied by $u_t^N$ and $u_t$, together with the 
regularization properties of the semigroup associated to \eqref{eq:ou-sde}. 
This leads to a quantitative a.s. bound on
\[\|u_t^N-u_t\|_{k^*}\] 
in terms of the initial error
\(\mathbb{W}_2^N \bigl(\mathcal{L}_{X_0^N},u_0^{\otimes N}\bigr)\),
the particle number $N$ and a stochastic integral term. This 
estimate is detailed in Proposition~\ref{prop:mild-formulation}.

Substituting the bounds of Proposition~\ref{prop:mild-formulation}
into the coupling Inequality~\eqref{coupling}, we observe that
$L^2(\PP)$ estimates for the initial error and for the stochastic integral term are needed.
The initial error is controlled in Lemma~\ref{W1-bound}, and the stochastic integral term is controlled in 
Proposition~\ref{lemma:stochastic-integral}.

Combining these estimates,
we obtain, for any $t\in(0,1]$,
\[\mathbb{W}_2^N \bigl(\mathcal{L}_{X_t^N},u_t^{\otimes N}\bigr)
\leq \rho_t(\|K\|_{L^k})
\,\mathbb{W}_2^N \bigl(\mathcal{L}_{X_0^N},u_0^{\otimes N}\bigr)
+C\,N^{-v},\]
for some explicit constant $v>0$.
The function $\rho_t$ satisfies $\rho_1(\|K\|_{L^k})<1$ whenever
$\|K\|_{L^k}$ is sufficiently small.
Therefore, the 
above estimate provides a strict contraction over a unit time 
interval. We emphasise that the term 
$\|K\|_{L^k}$ appearing in front of $\|u_t^N-u_t\|_{k^*}$ in \eqref{eq:scheme}
is crucial in order to obtain this contraction property when choosing $\|K\|_{L^k}$ 
small enough, similarly to \cite{HW25}.
By iterating this inequality through the 
semigroup property, we obtain the exponential-in-time contraction 
estimate stated in Theorem~\ref{thm:main}.\\

The proof of Theorem~\ref{thm:main-non-mollified} follows the same scheme the proof of Theorem~\ref{thm:main}. Indeed, the only 
difference is that, instead of the mollified empirical measure, we now have the non-mollified empirical measure $\mu_t^N$ in 
the drift term of the particles and hence 
we have the term $\big|(K\ast \mu_t^N - K \ast u_t)(X_t^{i,N})\big|$ in \eqref{eq:scheme}, instead of $\|K\|_{L^k} \|u_t^N - u_t\|_{k^*}.$
This new term is handdled
by introducing the pivot term $K*v_t^N(Y_t^{i,N})$, where $v_t^N$
is the mollified empirical measure
of the non-mollified system of particles.
This leads to controlling on the one hand,
\(\mathbb E \big[  |(K-K*V^N)(Y_t^{i,N}-Y_t^{j,N})|^2\big]\)
and on the other hand,
$\|K\|_{L^k} \left\|\|v_t^N-u_t\|_{k^*}\right\|_{L^2(\PP)}$.

In Step~1 of the proof of proof of Theorem~\ref{thm:main-non-mollified} (see Section~\ref{sec:non-mollified}), we control the first
term \(\mathbb E \left[  |(K-K*V^N)(Y_t^{i,N}-Y_t^{j,N})|^2\right]\).
Precisely, using a partial Girsanov argument, we derive heat-like estimates
for the density of the difference of two particles. This yields,
for $t\in(0,1]$,
\[\Biggl[\E\biggl(\frac1N\sum_{i=1}^N
\left|K\ast(\mu_t^N-v_t^N)(Y_t^{i,N})\right|^2\biggr)\Biggr]^{1/2}\leq
Ct^{-\frac{d}{2k}}\|K-K\ast V^N\|_{L^k}
+C\|K\|_{L^k}N^{-\frac12+\frac{d\alpha}{k}}.\]

In Step~2, we control the second term by adapting the mild
formulation of Section~4 to the non-mollified particle system.
This gives an estimate of $\|v_t^N-u_t\|_{k^*}$ similar to the one
obtained in the mollified case, with additional terms coming from
$K\ast(\mu_t^N-u_t^N)$. More precisely, for $t\in(0,1]$,
\begin{align*}
\left\|\|v_t^N-u_t\|_{k^*}\right\|_{L^2(\PP)}
&\leq
C\Theta(t)\mathbb W_2^N
\left(\mathcal L(Y_0^N),u_0^{\otimes N}\right)\\
&+C\left(\Theta(t)N^{-v}
+t^{\frac12-\frac{d}{k}}\|K-K\ast V^N\|_{L^k}
+\|K\|_{L^k}N^{-\frac12+\frac{d\alpha}{k}}\right),
\end{align*}
where $\Theta(t)$ is a negative power of $t$ and is defined in \eqref{theta}.
The uniform moment estimate needed to control the stochastic integral
is also recovered for the non-mollified particle system by a similar
partial Girsanov argument.

Combining the estimates obtained in Steps~1 and~2 with the coupling
estimate, we recover the same contraction mechanism as in the mollified case,
with the additional error $\|K-K\ast V^N\|_{L^k}$. Taking
$\|K\|_{L^k}$ sufficiently small and iterating the estimate on
successive time intervals of length $1$, we obtain
\[\mathbb W_2^N
\left(\mathcal L(Y_t^N),u_t^{\otimes N}\right)
\leq C\left(e^{-ct}\mathbb W_2^N
\left(\mathcal L(Y_0^N),u_0^{\otimes N}\right)
+N^{-v}+\|K-K\ast V^N\|_{L^k}\right),\]
which gives Theorem~\ref{thm:main-non-mollified}.
\section{Regularity of the nonlinear Fokker-Planck PDE}\label{sec:pde}

The purpose of this section is to establish the spatial regularity of the solution $u$ of the nonlinear Fokker-Planck equation~\eqref{nlfp-intro}
and of the drift $K\ast u_t$ it generates. The main result is the
time-integrable Lipschitz bound of Proposition~\ref{prop:lip-bound}, which is an
important regularity input in the proof of Theorem~\ref{thm:main}. To start with, the only information used on $u$ is that for every $T>0$, it belongs to the class $\mathcal{C}_{T}$, defined in~\eqref{def:C_T}. In particular no smoothness of $u$ is assumed \emph{a priori}, and the regularity is gained through the smoothing properties of the underlying semigroups. 
 
 The section is organised as follows. We first introduce the linear (decoupled) SDE obtained by freezing a general flow of measures $v$ in \eqref{mckean-vlasov}, together with its transition semigroup $P^v$ and the generalised Ornstein-Uhlenbeck semigroup $\hat{P}$, and recall the Duhamel 
formula relating them. We then gather classical smoothing bounds on $\hat{P}$ 
(Lemma~\ref{lem:OU-bounds}) and more recent bounds on $P^v$ from
    \cite{HRW25,HW25} (Lemma~\ref{lem:Pu}), which leads to deriving a mild formulation for $u$. We finally bootstrap these bounds first into Bessel regularity estimate for $u_t$ (Lemma~\ref{prop:lip-bound1}) 
and then into the time-integrable Lipschitz bound for $K\ast u_t$ (Proposition~\ref{prop:lip-bound}).

\subsection{Preliminary results on decoupled semigroups}

Let $(v_{t})_{t\geq 0}$ be a flow of probability measures on $\R^d$,
and consider the associated decoupled SDE
\begin{equation}\label{eq:decoupled-sde}
    \dd X^v_t = -F(X^v_t)\, \dd t- (K * v_t)(X^v_t)\, \dd t+ \sqrt{2}\, \dd W_t, \quad t>0, \  \mathcal{L}_{X^v_0} \in \mathcal{P}(\R^d),
\end{equation}
obtained by freezing the flow $v_t$ in the interaction term. Since
$(v_{t})_{t\geq 0}$ is fixed, \eqref{eq:decoupled-sde} is a linear SDE with a
time-dependent drift $b_t(x) \coloneqq  -F(x) - (K\ast v_t)(x)$.
We work with a general flow $v$, and because the estimates of this section
depend on $v$ only through $K\ast v$, they will be shown to hold for all
$v\in\mathcal{C}_{T}$. The case of interest is $(v_t)_{t\ge0}=(u_t)_{t\ge0}$, where
$u_t=\mathcal{L}_{X_t}$ is the flow of marginals of the unique global strong
solution of the McKean-Vlasov equation \eqref{mckean-vlasov} started from
$u_0\in\mathcal{P}(\R^d)$: for this choice, and with $X^v$ started from $u_0$,
the decoupled SDE \eqref{eq:decoupled-sde} coincides with \eqref{mckean-vlasov}, so
that $X^v=X$ is the McKean-Vlasov process itself. This is the flow to which the
lemmas below will be applied in the proof of Proposition~\ref{prop:lip-bound}.

For a generic $v\in \mathcal{C}_{T}$, the well-posedness of~\eqref{eq:decoupled-sde} follows from the fact that $(K\ast v_{t})_{t\geq 0}$ lies in a suitable Krylov-Röckner class, as we now make precise.

\begin{lemma}\label{lem:decoupled-wp}
Suppose that Assumption~\ref{assum:potentials} hold and
that $X_0 \in L^2(\PP)$. 
For $T>0$, let $v\in\mathcal{C}_{T}$, as defined in~\eqref{def:C_T}.
Then,  for any $q\in[1,\tfrac{2k}{d})$,
\begin{equation}\label{eq:Kastu-LqLinfty}
K\ast v \in L^q\bigl([0,T];L^\infty(\R^d)\bigr).
\end{equation}
In particular there exists $q>2$ for which \eqref{eq:Kastu-LqLinfty} holds, and
consequently the decoupled SDE \eqref{eq:decoupled-sde} admits a unique strong
solution.
\end{lemma}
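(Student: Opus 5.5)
The bound on $K\ast v$ comes from a Hölder-type duality estimate, and well-posedness then follows from the Krylov--Röckner framework.

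\emph{Pointwise bound.} For each $t\in(0,T]$ and $x\in\R^d$,
\[
|(K\ast v_t)(x)| = \Big|\int K(x-y)\,v_t(\dd y)\Big| \le \|K(x-\cdot)\|_{L^k}\,\|v_t\|_{k^*} = \|K\|_{L^k}\|v_t\|_{k^*}.
\]
This uses the definition of $\|\cdot\|_{k^*}$ as the dual norm of $L^k$, applied componentwise to $K$, together with the translation and reflection invariance of the $L^k$ norm. Since $v\in\mathcal C_T$, we have $\|v_t\|_{k^*}\le C_v t^{-\frac{d}{2k}}$, and therefore
\[
\|K\ast v_t\|_{L^\infty}\le C_v\|K\|_{L^k}\,t^{-\frac{d}{2k}}.
\]
For $k=\infty$ the bound is trivially $\|K\|_{L^\infty}$, because $v_t$ is a probability measure.

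Measurability in time of $t\mapsto \|K\ast v_t\|_{L^\infty}$ needs a small check. One can approximate $K$ in $L^k$ by continuous compactly supported functions $K_n$; then $t\mapsto K_n\ast v_t(x)$ is continuous by weak continuity of $v$, and the limit is measurable. I would treat this only briefly. Alternatively one can work with an upper integral, since only a bound is needed.

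\emph{Integrability in time.} From the bound above,
\[
\int_0^T\|K\ast v_t\|_{L^\infty}^q\,\dd t\le C\int_0^T t^{-\frac{qd}{2k}}\,\dd t<\infty
\quad\text{iff}\quad q<\tfrac{2k}{d},
\]
which gives \eqref{eq:Kastu-LqLinfty}. Since $k>d$, we have $\tfrac{2k}{d}>2$, so some $q\in(2,\tfrac{2k}{d})$ exists.

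\emph{Well-posedness.} Write the drift as $b_t=-F+\tilde b_t$ with $\tilde b:=-K\ast v\in L^q([0,T];L^\infty(\R^d))$ for some $q>2$. The pair $(p,q)=(\infty,q)$ satisfies the Krylov--Röckner condition $\frac d p+\frac 2 q<1$, since here it reads $\frac{2}{q}<1$. Hence the SDE with drift $\tilde b$ alone has a unique strong solution by \cite{KR05}.

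The Lipschitz part $-F$ must be added on top, since $F$ is unbounded. I would invoke the known extension of Krylov--Röckner to drifts that are a sum of a Lipschitz (hence linear-growth) part and a singular part, as in Xia--Zhang et al. or \cite{HRW25}. Alternatively, one can localize: stop at exit times of balls, where $F$ is bounded and Krylov--Röckner applies directly. Linear growth of $F$ combined with boundedness of the singular integral $\int_0^T\|\tilde b_t\|_\infty\,\dd t$ gives non-explosion via a Gronwall estimate on $|X_t|^2$, using $X_0\in L^2(\PP)$. Pathwise uniqueness on each ball then patches together into global strong existence and uniqueness.

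The main obstacle is this last step: justifying the combination of an unbounded Lipschitz drift with the $L^q_tL^\infty_x$ singular drift. I would first try to cite a reference directly covering this, and fall back on the localization argument otherwise.
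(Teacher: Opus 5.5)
Your proposal is correct and follows essentially the same route as the paper: the duality bound $\|K\ast v_t\|_{L^\infty}\le \|K\|_{L^k}\|v_t\|_{k^*}\lesssim t^{-\frac{d}{2k}}$, the integrability of $t^{-\frac{qd}{2k}}$ for $q<\frac{2k}{d}$, and then a local strong solution from \cite{KR05} together with a second-moment estimate ruling out explosion. The paper resolves your ``main obstacle'' the same way as your fallback, by observing that \cite{KR05} only needs the total drift $-F-K\ast v$ to be locally in $L^q_tL^p_x$, which the locally bounded $F$ does not spoil; it then uses the one-sided coercivity of $F$ rather than its linear growth in the It\^o estimate for $|X_t|^2$, and either choice works on a finite horizon.
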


\begin{proof}
For a.e.\ $x\in\R^d$ and every $t>0$, since $K\in L^k(\R^d)$ and $L^{k^*}$
is the dual of $L^k$,
\begin{equation*}
|K\ast v_t(x)|
= \bigl|\langle v_t, K(x-\cdot)\rangle\bigr|
\le  \|K\|_{L^k}\,\|v_t\|_{k^*},
\end{equation*}
so that $\|K\ast v_t\|_{L^\infty}\le \|K\|_{L^k}\,\|v_t\|_{k^*}$. As $v\in\mathcal{C}_{T}$, 
there is $C>0$ with $\|v_t\|_{k^*}\le C\,t^{-\frac{d}{2k}}$ on $(0,T]$. Thus, for
$q\ge1$,
\[
\int_0^T \|K\ast v_t\|_{L^\infty}^q \,\dd t
\le (C\|K\|_{L^k})^q \int_0^T t^{-\frac{qd}{2k}}\,\dd t,
\quad\text{which is finite if and only if }\ \frac{qd}{2k}<1,
\]
that is $q<\tfrac{2k}{d}$, which proves \eqref{eq:Kastu-LqLinfty}. Since $k>d$ we
have $\tfrac{2k}{d}>2$, so one may fix $q\in(2,\tfrac{2k}{d})$; the exponents
$(p,q)=(\infty,q)$ then satisfy the Ladyzhenskaya-Prodi-Serrin condition
$\frac{d}{p}+\frac{2}{q}<1$. The total drift $b_t=-F-K\ast v_t$ splits into the
singular part $-K\ast v\in L^q([0,T];L^\infty(\R^d))$ and the locally bounded
Lipschitz field $-F$. In particular $b$ is locally in
$L^q([0,T];L^p(\R^d))$. As \cite{KR05} only requires local $L^q_{t} L^p_{x}$ integrability of the drift under the above condition, it provides a
unique local strong solution. Finally, the one-sided
coercivity bound in Assumption~\ref{assum:potentials}(i)
rules out the possibility of finite-time blow-up through 
the same moment estimate.  Indeed, for
the local solution, It\^o's formula yields
\[\dd |X_t|^2
\le \Bigl(-2\lambda |X_t|^2
+2\bigl(|F(0)|+\|K\ast v_t\|_{L^\infty}\bigr)|X_t|
+2d
\Bigr)\,\dd t
+\dd M_t,\]
for some local martingale $M$. Since, by
\eqref{eq:Kastu-LqLinfty},
\(t\longmapsto \|K\ast v_t\|_{L^\infty}\in L^q(0,T)\) for some $q>2$, 
Young's inequality and Grönwall's lemma together with the fact that $X_0\in L^2(\PP)$ give
\(\sup_{t\in[0,T]}\E|X_t|^2<\infty.\)
Hence the local solution cannot explode before time $T$, and therefore extends
to a global strong solution.
\end{proof}

Denote by $P^v_{s,t}$ the semigroup associated to~\eqref{eq:decoupled-sde} and by
$P^{v,*}_{s,t}$ its dual acting on measures. As a reference diffusion, we also
introduce the SDE
\begin{equation}\label{eq:ou-sde}
    \dd Z_t = -F(Z_t)\, \dd t+ \sqrt{2}\, \dd W_t, \quad t>0, \, \mathcal{L}_{Z_0} \in \mathcal{P}(\R^d),
\end{equation}
whose semigroup and dual are denoted by $\hat{P}_{t}$ and $\hat{P}_{t}^*$. By a
slight abuse, we call the solution of~\eqref{eq:ou-sde} the
(generalised) Ornstein-Uhlenbeck process: its semigroup exhibits regularisation
properties similar to those of the classical Ornstein-Uhlenbeck semigroup, and
$F$ plays the role of the gradient of a confining potential. We denote by
$\hat{p}_t(x,y)$ the transition kernel of $\hat{P}_t$, defined by
$\hat{P}_tf(x)=\int_{\R^d}f(y)\hat{p}_t(x,y)\,\dd y$.

The two following lemmas gather estimates on the semigroups $\hat P$ and $P^v$ borrowed from \cite{HRW25,HW25,MPZ21}. We start with smoothing bounds on the (generalised) Ornstein-Uhlenbeck semigroup $\hat P$ and pointwise estimates on its kernel $\hat p$. For the latter, we need as in~\cite{MPZ21} the following Gaussian kernel:
\begin{equation}\label{eq:gaussian-kernel}
g^F_\kappa(t, x, y) \coloneqq  (\kappa^{-1}\pi t)^{-\frac{d}{2}} \exp
\left( \frac{-\kappa|y - \theta_t(x)|^2}{t} \right),
\quad t>0,\ x,y\in\R^d,
\end{equation}
where $\theta_t(x)$ solves $\dd\theta_t(x)=-F(\theta_t(x))\,\dd t$,
$\theta_0(x)=x$. Bounds on the associated semigroup, denoted by $G^{\kappa,F}$, will also be useful.

\begin{lemma}\label{lem:OU-bounds}
Let $(\hat P_t)_{t\ge 0}$ be the semigroup of \eqref{eq:ou-sde} and $\hat p_t$ its
transition kernel. For $T>0$,  the
following holds:
\begin{enumerate}[label=\textup{(\roman*)}]
\item\label{item:heat-bounds}
 there exist $C_1,C_2,C_3,\kappa_1,\kappa_2,\kappa_3>0$ such that  for all $0 < t \le T$,
\begin{equation*}
\left\{
\begin{aligned}
C_1^{-1} g^F_{\kappa_1^{-1}}(t,x,y)
&\leq \hat{p}_t(x,y)
\leq C_1 g^F_{\kappa_1}(t,x,y)\\
\left|\nabla_y \hat{p}_t(x,y)\right|
&\leq C_2 t^{-\frac{1}{2}} g^F_{\kappa_2}(t,x,y)\\
\left|\nabla_x \hat{p}_t(x,y)\right|
&\leq C_3 t^{-\frac{1}{2}} g^F_{\kappa_3}(t,x,y)\, ;
\end{aligned}
\right.
\end{equation*}

\item\label{item:heat-holder}
 there exist $C,\kappa>0$ such that for any $\beta\in(0,1)$, $j\in\{0,1\}$, any $x,y_1,y_2\in\R^d$ and any $0 < t \le T$,
\begin{equation*}
\bigl|\nabla_x^j \hat p_t(x,y_1) - \nabla_x^j \hat p_t(x,y_2)\bigr|
\le C\,\frac{|y_1-y_2|^{\beta}}{t^{\frac{\beta+j}{2}}}
\bigl(g^F_\kappa(t,x,y_1)+g^F_\kappa(t,x,y_2)\bigr)\, ;
\end{equation*}

\item\label{item:sgG-Lp}
 for any $1 \le p_1 \le p_2 \le \infty$ and any $\kappa>0$, there exists $C>0$ such that for all $0 < t \le T$,
\begin{equation*}%
\| G^{\kappa,F}_t\|_{L^{p_1}(\mathbb{R}^d)\to L^{p_2}(\mathbb{R}^d)}
\le C\,t^{-\frac{d}{2}\left(\frac1{p_1}-\frac1{p_2}\right)}
 ~\text{ and }~ \|\nabla G^{\kappa,F}_t\|_{L^{p_1}(\mathbb{R}^d)\to L^{p_2}(\mathbb{R}^d)}
\le C\,t^{-\frac12-\frac{d}{2}\left(\frac1{p_1}-\frac1{p_2}\right)} \, ;
\end{equation*}

\item\label{item:OU-Lp}  for any $1 \le p_1 \le p_2 \le \infty$, there exists $C>0$ such that for all $0 < t \le T$,
\begin{equation*}%
\|\hat P_t\|_{L^{p_1}(\mathbb{R}^d)\to L^{p_2}(\mathbb{R}^d)}
\le C\,t^{-\frac{d}{2}\left(\frac1{p_1}-\frac1{p_2}\right)}
  ~\text{ and }~ \|\nabla \hat P_t\|_{L^{p_1}(\mathbb{R}^d)\to L^{p_2}(\mathbb{R}^d)} 
\le C\,t^{-\frac12-\frac{d}{2}\left(\frac1{p_1}-\frac1{p_2}\right)} \, ;
\end{equation*}

\item\label{item:Phat-Wasserstein} 
for any $k>d$, there exists $C>0$ such that for all $0 < t \le T$ and $\mu,\nu \in \mathcal{P}_{1}$,
\begin{equation*}
\|\hat{P}_t^*(\mu - \nu)\|_{k^*} \leq C t^{-\frac{1}{2}- \frac{d}{2k}}
\mathbb{W}_1(\mu, \nu).
\end{equation*}
\end{enumerate}
\end{lemma}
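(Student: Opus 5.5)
Items (i) and (ii) are the Gaussian two-sided and gradient/Hölder estimates for the transition density of an SDE with Lipschitz drift and additive noise. I would import them from \cite{MPZ21}, where they are proved with the flow-adapted Gaussian $g^F_\kappa$ of \eqref{eq:gaussian-kernel}. Their argument is a parametrix around the frozen process $y\mapsto \theta_t(x)+\sqrt2 W_t$. The Lipschitz property of $F$ gives $|\theta_t(x)-\theta_t(x')|\le e^{|F|_{Lip}t}|x-x'|$ and the matching reverse bound. This is what allows the constants and the parameters $\kappa_i$ to be chosen uniformly for $t\le T$. The Hölder bound (ii) I would obtain by interpolation. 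If $|y_1-y_2|\ge \sqrt t$, the pointwise bounds of (i) are enough, since $|y_1-y_2|^\beta t^{-\beta/2}\ge1$. If $|y_1-y_2|\le\sqrt t$, I would apply the mean value theorem in $y$ using the $\nabla_y$ (resp.\ mixed $\nabla_x\nabla_y$) bound of order $t^{-(1+j)/2}$ from \cite{MPZ21}. On the segment the Gaussian $g^F_\kappa(t,x,\cdot)$ is comparable, after lowering $\kappa$, to its value at either endpoint, because the segment has length at most $\sqrt t$. This gives the factor $(|y_1-y_2|/\sqrt t)\le (|y_1-y_2|/\sqrt t)^\beta$.

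For (iii), write $G^{\kappa,F}_tf(x)=\int g^F_\kappa(t,x,y)f(y)\,\dd y$. First, $\int g^F_\kappa(t,x,y)\,\dd y=C$ for every $x$. Second, $x\mapsto\theta_t(x)$ is a bi-Lipschitz diffeomorphism with Jacobian bounded above and below uniformly for $t\le T$, since $\nabla\theta_t$ solves a linear ODE with bounded coefficient $\nabla F$. Changing variables $x\to\theta_t(x)$ therefore gives $\sup_y\int g^F_\kappa(t,x,y)\,\dd x\le C$. The sup of $g$ is $Ct^{-d/2}$. Schur's test and Riesz--Thorin then give the $L^{p_1}\to L^{p_2}$ bound. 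Equivalently, I would bound $\|g(t,x,\cdot)\|_{L^r}\lesssim t^{-\frac d2(1-\frac1r)}$ uniformly in $x$, and the same in $x$ for fixed $y$, with $1+\frac1{p_2}=\frac1r+\frac1{p_1}$, and apply Young's inequality in generalized (Schur) form. For the gradient, $|\nabla_x g^F_\kappa|\le C t^{-1/2}|\nabla\theta_t|\,\frac{|y-\theta_t(x)|}{\sqrt t}g^F_\kappa\le Ct^{-1/2}g^F_{\kappa/2}$, which reduces the gradient bound to the previous case. Item (iv) then follows by domination: (i) gives $|\hat P_tf|\le C_1G^{\kappa_1,F}_t|f|$ and $|\nabla\hat P_tf|\le C_3t^{-1/2}G^{\kappa_3,F}_t|f|$, and we apply (iii).

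Item (v) is the only genuinely new estimate and I expect it to be the main obstacle. By duality, $\|\hat P_t^*(\mu-\nu)\|_{k^*}=\sup_{\|f\|_{L^k}\le1}|\langle \mu-\nu,\hat P_tf\rangle|$. Let $\pi$ be an optimal $\mathbb W_1$-coupling. Then
\[
|\langle\mu-\nu,\hat P_tf\rangle|\le \int |\hat P_tf(x)-\hat P_tf(x')|\,\pi(\dd x,\dd x')\le \|\nabla\hat P_tf\|_{L^\infty}\,\mathbb W_1(\mu,\nu).
\]
By (iv) with $p_1=k$ and $p_2=\infty$, $\|\nabla \hat P_tf\|_{L^\infty}\le Ct^{-\frac12-\frac d{2k}}\|f\|_{L^k}$. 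Taking the supremum over $f$ gives (v). The delicate point is justifying that $\hat P_tf$ is Lipschitz with this constant for merely $L^k$ data, and that the pairing with $\mu-\nu$ is well defined. Both follow from the pointwise gradient bound (i), which makes $\hat P_tf$ a bounded $C^1$ function for $t>0$, combined with the $L^k\to L^\infty$ gradient bound; this is where $k<\infty$ or $k=\infty$ must both be handled uniformly.
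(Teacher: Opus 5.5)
Your proposal is correct. For items (i), (iii) and (iv) it follows the paper: (i) is imported from \cite{MPZ21}, (iii) is the classical Gaussian Young/Schur bound after the change of variables $w=\theta_t(x)$, and (iv) comes from dominating $\hat p_t$ and $\nabla_x\hat p_t$ by $g^F_\kappa$ and then applying (iii).

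The genuine difference is in (v). The paper quotes \cite[Theorem~2.3(1)]{HRW25}, a result built for semigroups with singular drifts. You instead derive (v) in two lines from (iv), by Kantorovich--Rubinstein duality $|\langle\mu-\nu,\hat P_tf\rangle|\le\|\nabla\hat P_tf\|_{L^\infty}\,\mathbb W_1(\mu,\nu)$ combined with $\|\nabla\hat P_t\|_{L^k\to L^\infty}\le Ct^{-\frac12-\frac d{2k}}$. This makes the lemma self-contained and shows that (v) is just a corollary of the gradient bound; the restriction $k>d$ plays no role in your argument. The generality the citation offers, namely singular drifts, is not needed for $\hat P$, whose drift is Lipschitz.

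You also deviate slightly in (ii) for $j=1$. The paper uses a direct H\"older-in-$y$ estimate on $\nabla_x\hat p_t$, namely \cite[Lemma~A.1(C1)]{MPZ21}. You instead apply the mean value theorem with a mixed bound $|\nabla_y\nabla_x\hat p_t(x,y)|\le Ct^{-1}g^F_\kappa(t,x,y)$, which you attribute to \cite{MPZ21} but which is not among the estimates collected in (i). It can be recovered from (i) as follows:
\begin{itemize}
\item write $\nabla_x\hat p_t(x,y)=\int\nabla_x\hat p_{t/2}(x,z)\,\hat p_{t/2}(z,y)\,\dd z$ and differentiate in $y$;
\item bound the result using the two gradient estimates of (i);
\item conclude with $\int g^F_{\kappa}(\tfrac t2,x,z)\,g^F_{\kappa}(\tfrac t2,z,y)\,\dd z\le Cg^F_{\kappa'}(t,x,y)$, which follows from $\theta_t=\theta_{t/2}\circ\theta_{t/2}$ and the bi-Lipschitz bounds on $\theta_{t/2}$.
\end{itemize}
This is the same semigroup device the paper uses in the proof of Proposition~\ref{prop:lip-bound}.

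A minor point on (iii): for merely Lipschitz $F$, the linearized ODE for $\nabla\theta_t$ is not literally available. The bounds $e^{-|F|_{Lip}t}|x-x'|\le|\theta_t(x)-\theta_t(x')|\le e^{|F|_{Lip}t}|x-x'|$ together with the area formula give the Jacobian control you need there.
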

\begin{proof}
The two-sided Gaussian comparison and the derivative bounds of item~\ref{item:heat-bounds} come directly from \cite[Theorem~1.2(i),(ii),(iv)]{MPZ21}.

In~\ref{item:heat-holder}, the bound for $j=0$ can be deduced by interpolation from \ref{item:heat-bounds} (as detailed in \cite[Lemma~3.3]{HW25}). For $j=1$, this is precisely \cite[Lemma~A.1.(C1)]{MPZ21}.

The estimates in~\ref{item:sgG-Lp} are direct consequences of classical Gaussian $L^p$ bounds, using the fact that $(\theta_{t})_{t\in [0,T]}$ is a family of diffeomorphisms on $\R^d$ verifying $\sup_{t\in [0,T]} \max\big(\lVert \nabla \theta_{t} \rVert_{\infty}, \lVert \nabla \theta_{t}^{-1} \rVert_{\infty} \big)<\infty$.
 
Item~\ref{item:OU-Lp} gives bounds for the (generalised) Ornstein-Uhlenbeck semigroup that can be deduced from the upper bound on $\hat{p}_{t}$ in~\ref{item:heat-bounds}; see the proof of \cite[Lemma~5.3]{HRW25} for a detailed argument in the localized spaces $\tilde L^{p}$. 

Finally, \ref{item:Phat-Wasserstein} can be taken for instance from \cite[Theorem~2.3(1)]{HRW25} with the choice of parameters $p=\infty$ and $q=1$ therein (note that \cite[Theorem~2.3]{HRW25} holds in greater generality for semigroups with singular drifts).
\end{proof}

We next present properties of the semigroup $P^v$ associated to~\eqref{eq:decoupled-sde}.
Recall first, from \cite[Proposition~5.5]{HRW25}, the Duhamel formula
relating $P^v$ to $\hat P$: for any bounded measurable $f \in \mathcal{B}_b(\R^d)$ and any
$0\le s\le t\le T$,
\begin{equation}\label{eq:duhamel}
    P^v_{s,t} f = \hat{P}_{t-s} f - \int_s^t P^v_{s,r} (K * v_r \cdot
    \nabla \hat{P}_{t-r} f)\, \dd r.
\end{equation}

\begin{lemma}\label{lem:Pu}
Suppose Assumption~\ref{assum:potentials} holds. Let $T>0$ and $v\in\mathcal{C}_{T}$. Then the following holds:
\begin{enumerate}[label=\textup{(\roman*)}]
\item\label{item:semigroup-v-bound} for any $1<p_1\le\infty$, there exists $C>0$ such that for any $0\le s<t\le T$,
\begin{equation}\label{eq:Pu-smoothing}
\|P^v_{s,t}\|_{L^{p_1}(\R^d)\to L^{\infty}(\R^d)}
\le C\,(t-s)^{-\frac{d}{2p_1}};
\end{equation}

\item\label{item:Duhamel-mu-v} for any $\mu_0\in\mathcal{P}(\R^d)$, the marginal flow
$\mu_t\coloneqq P^{v,*}_{0,t}\mu_0$ of \eqref{eq:decoupled-sde} started
from $\mu_0$ admits a density (still denoted by $\mu_t$) for any $t\in(0,T]$ that satisfies, for a.e.\
$y\in\R^d$, the Duhamel formula
\begin{equation}\label{eq:duhamel-u}
\mu_t(y) = \int_{\R^d} \hat{p}_{t}(x,y) \, \mu_0(\dd x) -
 \int_0^t \int_{\R^d} \bigl(K\ast v_r(x)\bigr) \cdot \nabla_x \hat{p}_{t-r}(x,y)
  \, \mu_r(x)\, \dd x\,\dd r,
\end{equation}
where $\hat p$ is the transition kernel of $\hat P$.
\end{enumerate}
\end{lemma}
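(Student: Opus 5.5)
Both items follow by perturbing around the generalised Ornstein--Uhlenbeck semigroup $\hat P$. We use the Duhamel formula \eqref{eq:duhamel}, the kernel bounds of Lemma~\ref{lem:OU-bounds}, and the estimate $\|K\ast v_r\|_{L^\infty}\le \|K\|_{L^k}\|v_r\|_{k^*}\le C r^{-\frac{d}{2k}}$ from the proof of Lemma~\ref{lem:decoupled-wp}.

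\emph{Item (i).} We first obtain an $L^\infty\to L^\infty$ bound and then the smoothing bound.

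Since $P^v_{s,t}$ is a Markov semigroup, $\|P^v_{s,t}\|_{L^\infty\to L^\infty}\le 1$. For $p_1<\infty$, apply \eqref{eq:duhamel} to $f\in\mathcal B_b\cap L^{p_1}$ and take sup norms. Lemma~\ref{lem:OU-bounds}\ref{item:OU-Lp} gives $\|\hat P_{t-s}f\|_\infty\le C(t-s)^{-\frac{d}{2p_1}}\|f\|_{L^{p_1}}$. The integral term is bounded by
\[
\int_s^t \|K\ast v_r\|_{\infty}\,\|\nabla\hat P_{t-r}f\|_\infty\,\dd r
\le C\|f\|_{L^{p_1}}\int_s^t r^{-\frac{d}{2k}}(t-r)^{-\frac12-\frac{d}{2p_1}}\,\dd r .
\]
This integral is finite only when $\frac12+\frac{d}{2p_1}<1$, i.e.\ $p_1>d$. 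When it is, a Beta-function computation shows it is $\lesssim (t-s)^{-\frac{d}{2p_1}}\,(t-s)^{\frac12-\frac{d}{2k}}$. The inequality $r^{-\frac{d}{2k}}\le (r-s)^{-\frac{d}{2k}}$ handles $s>0$. The extra factor is bounded on $[0,T]$ because $k>d$.

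For $1<p_1\le d$ the direct estimate fails. There are two ways to handle this range.

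The first option is to iterate Duhamel, or equivalently write $P^v$ as a Volterra series in which each term gains a factor $(t-s)^{\frac12-\frac{d}{2k}}$. In each term, the derivative is absorbed through the Gaussian kernel bound $|\nabla_x\hat p_{t-r}|\le C(t-r)^{-1/2}g^F_{\kappa_3}$ from Lemma~\ref{lem:OU-bounds}\ref{item:heat-bounds}. The singular $L^{p_1}\to L^\infty$ factor is placed on the first propagator, and the Gaussian semigroup $G^{\kappa,F}$ of Lemma~\ref{lem:OU-bounds}\ref{item:sgG-Lp} is used for the others.

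The simpler option, which I would try first, uses the semigroup property $P^v_{s,t}=P^v_{s,m}P^v_{m,t}$ with $m=\frac{s+t}{2}$. We need $P^v_{m,t}:L^{p_1}\to L^{p_2}$ for some $p_2>d$ with rate $(t-m)^{-\frac d2(\frac1{p_1}-\frac1{p_2})}$. This is obtained from the same Duhamel expansion, now measured in $L^{p_2}$ using \ref{item:OU-Lp}. The integral converges provided $\frac12+\frac d2(\frac1{p_1}-\frac1{p_2})<1$, so we step up $p_1\to p_2$ in finitely many increments with $\frac1{p_1}-\frac1{p_2}<\frac1d$. The exponents add up to $\frac{d}{2p_1}$. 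Each step must be done carefully because $P^v$ also appears inside the integral. I would close it by a Gronwall-type argument: define $\Phi(t)=\sup_f (t-s)^{\frac d2(\frac1{p_1}-\frac1{p_2})}\|P^v_{s,t}f\|_{L^{p_2}}/\|f\|_{L^{p_1}}$ (finite a priori after truncation) and obtain a Volterra inequality for it.

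Alternatively, this whole item is essentially \cite[Lemma~5.3 / Theorem~2.3]{HRW25} in localized spaces. Since $\|\cdot\|_{\tilde L^{p}}\le\|\cdot\|_{L^{p}}$ on the input side and the output is in $L^\infty$, a direct citation also works, and I would point to it.

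\emph{Item (ii).} Existence of a density follows from (i) by duality. For $f\in L^{k}\cap\mathcal B_b$, $|\langle\mu_t,f\rangle|=|\langle \mu_0,P^v_{0,t}f\rangle|\le Ct^{-\frac{d}{2k}}\|f\|_{L^k}$, so $\mu_t\in L^{k^*}$ by Riesz representation.

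To derive \eqref{eq:duhamel-u}, test \eqref{eq:duhamel} with $s=0$ against $\mu_0$. In the integral, $\langle\mu_0,P^v_{0,r}h\rangle=\langle\mu_r,h\rangle$ with $h=K\ast v_r\cdot\nabla\hat P_{t-r}f$. Then write $\hat P_{t-r}f(x)=\int f(y)\hat p_{t-r}(x,y)\,\dd y$ and use Fubini to get $\langle\mu_t,f\rangle=\int f(y)\,[\text{RHS}](y)\,\dd y$ for all $f\in C_c$, which identifies the density a.e.

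Fubini requires absolute integrability of
\[
\int_0^t\!\!\int\!\!\int |f(y)|\,\|K\ast v_r\|_\infty\, |\nabla_x\hat p_{t-r}(x,y)|\,\mu_r(x)\,\dd x\,\dd y\,\dd r .
\]
By the gradient bound in Lemma~\ref{lem:OU-bounds}\ref{item:heat-bounds} and $\int g^F_{\kappa}(t-r,x,y)\,\dd y\le C$, this is at most $C\|f\|_\infty\int_0^t r^{-\frac{d}{2k}}(t-r)^{-\frac12}\,\dd r<\infty$.

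The main obstacle is item (i) for $p_1\le d$, where the single Duhamel step is not integrable. It is handled by the bootstrapping in $p$ described above or by citing \cite{HRW25}.
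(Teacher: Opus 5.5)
Your item (ii) is the paper's argument essentially verbatim: density from (i) by duality, then \eqref{eq:duhamel} with $s=0$ paired with $\mu_0$, $\mu_0(P^v_{0,r}\,\cdot)=\mu_r$, the kernel representation, and Fubini. You also write out the integrability bound that the paper leaves implicit. For item (i), the paper's proof is exactly your fallback. It quotes the localized estimate $\|P^v_{s,t}\|_{\tilde L^{p_1}\to\tilde L^{p_2}}\le C(t-s)^{-\frac d2(\frac1{p_1}-\frac1{p_2})}$ from \cite[Proposition~5.4]{HRW25}, takes $p_2=\infty$, and uses $\tilde L^\infty=L^\infty$ and $\|g\|_{\tilde L^{p_1}}\le\|g\|_{L^{p_1}}$.

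Your primary route is genuinely different. For $p_1\in(d,\infty]$, the one-step Duhamel bound (using $\|P^v_{s,r}\|_{L^\infty\to L^\infty}\le 1$, Lemma~\ref{lem:OU-bounds}\ref{item:OU-Lp} and $\|K\ast v_r\|_{L^\infty}\lesssim r^{-\frac d{2k}}$) is a complete, self-contained proof. It also shows that $C$ depends on $v$ only through $\sup_{r\le T}r^{\frac d{2k}}\|v_r\|_{k^*}$, which is exactly the dependence the paper later invokes to get constants independent of $u_0$. This range covers what is used downstream: $p_1=k$ in \eqref{ut-time-reg} with $q=k^*$, feeding Proposition~\ref{prop:lip-bound}, and any $p_1$ for the existence of a density. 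The citation buys the remaining range $1<p_1\le d$ at no cost.

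Two points need fixing.

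First, for $1<p_1\le d$ your bootstrap does not close as written. Measuring the Duhamel integrand in $L^{p_2}$ presupposes a uniform bound on $\|P^v_{s,r}\|_{L^{p_2}\to L^{p_2}}$, and the Markov property does not give this. One would need, e.g., an $L^1\to L^1$ bound, i.e.\ control of $P^{v,*}_{s,r}$ applied to Lebesgue measure. Your quantity $\Phi$ does work if you instead keep $h_r\coloneqq K\ast v_r\cdot\nabla\hat P_{t-r}f$ in $L^{p_1}$, where $\|h_r\|_{L^{p_1}}\lesssim r^{-\frac d{2k}}(t-r)^{-\frac12}\|f\|_{L^{p_1}}$. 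Then bound $\|P^v_{s,r}h_r\|_{L^{p_2}}\le\Phi(r)(r-s)^{-\gamma}\|h_r\|_{L^{p_1}}$ with $\gamma=\frac d2(\frac1{p_1}-\frac1{p_2})$. The admissible step is then $\gamma+\frac d{2k}<1$, not $\gamma<\frac12$. You still need an approximation of $K$ that makes $\Phi$ finite a priori, with constants uniform in the approximation. Since you defer to \cite{HRW25} for this range, this is not fatal.

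Second, in (ii) the duality step with $p_1=k$ gives nothing when $k=\infty$, which Assumption~\ref{assum:potentials} allows: $|\langle\mu_t,f\rangle|\le\|f\|_{L^\infty}$ holds for every probability measure. Use any finite $p_1\in(d,\infty)$ instead, which your own argument covers; the paper uses an arbitrary $p_1>1$.
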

\begin{proof}
By Lemma~\ref{lem:decoupled-wp}, the drift $b_t=-F-K\ast v_t$ of
\eqref{eq:decoupled-sde} has its singular part $K\ast v$ in $L^q([0,T];L^\infty)$ for
some $q>2$, cf.\ \eqref{eq:Kastu-LqLinfty}. In this case the transition semigroup of \eqref{eq:decoupled-sde} inherits the same Gaussian upper bounds as the Ornstein-Uhlenbeck semigroup $\hat P$: by \cite[Proposition~5.4]{HRW25}, one has, in the localized spaces,
\begin{equation}\label{eq:Pu-localized}
\|P^v_{s,t}\|_{\tilde L^{p_1}\to \tilde L^{p_2}}
\le C\,(t-s)^{-\frac{d}{2}\left(\frac1{p_1}-\frac1{p_2}\right)},
\qquad 1<p_1\le p_2\le\infty .
\end{equation}
We only use this estimate with $p_2=\infty$, so in that
case the passage from the localized to the usual Lebesgue spaces is direct
since $\tilde L^{\infty}=L^{\infty}$ and $\|g\|_{\tilde L^{p_1}}\le\|g\|_{L^{p_1}}$,
so that \eqref{eq:Pu-localized} with $p_2=\infty$ gives~\eqref{eq:Pu-smoothing} with the same constant. This proves \ref{item:semigroup-v-bound}.

For \ref{item:Duhamel-mu-v}, set $\mu_t\coloneqq P^{v,*}_{0,t}\mu_0$. First, observe that thanks to the first part of 
the lemma, $ \lVert \mu_{t} \rVert_{p_{1}^*}<\infty$ for any $p_{1}>1$ and $t>0$. Hence $\mu_{t}$ admits a density. 
Applying the Duhamel formula \eqref{eq:duhamel} with $s=0$, testing against $f\in\mathcal{B}_b(\R^d)$ and pairing
with $\mu_0$ gives
\[\int_{\R^d} f(y)\, \mu_t(y)\,\dd y
= \int_{\R^d} \hat P_{t} f(x)\,\mu_0(\dd x)
- \int_0^t \int_{\R^d} \bigl(K\ast v_r(x)\bigr)
\cdot\nabla\hat P_{t-r}f(x)\, \mu_r(x)\,\dd x\,\dd r,\]
where we used $\mu_0(P^v_{0,r}\,\cdot)=\mu_r(\cdot)$ in the last term. Inserting the
kernel representation $\hat P_\tau f(x)=\int_{\R^d}\hat p_\tau(x,y)f(y)\,\dd y$,
so that $\nabla\hat P_\tau f(x)=\int_{\R^d}\nabla_x\hat p_\tau(x,y)f(y)\,\dd y$, and
using Fubini's theorem (justified by the bounds of Lemma~\ref{lem:OU-bounds} and
$v\in\mathcal{C}_T$), the right-hand side equals $\int_{\R^d} f(y)\,\Phi_t(y)\,\dd y$
with $\Phi_t(y)$ the right-hand side of \eqref{eq:duhamel-u}. As $f$ is arbitrary,
\eqref{eq:duhamel-u} holds for a.e.\ $y$. 
\end{proof}

\subsection{Bessel regularity of the PDE and Lipschitz bound on $K\ast u$}

First, we derive a Bessel regularity estimate for the solution $u$ of the Fokker-Planck
PDE \eqref{nlfp-intro}. This is a first step towards the time-integrable 
Lipschitz bound of Proposition~\ref{prop:lip-bound}. The proof is based on the regularity properties of the global strong solution 
of the McKean-Vlasov equation \eqref{mckean-vlasov} and the 
Duhamel formula \eqref{eq:duhamel}.
\begin{proposition}\label{prop:lip-bound1}
Suppose Assumption \ref{assum:potentials} holds and that $u_0\in\mathcal{P}_2(\R^d)$.
Let $(\theta, q) \in (0,1) \times (1,\infty)$ be
 such that $\frac{1}{q^*}+\frac{1}{k}<\frac{2}{d}$.
For any $\beta \in (\theta,1)$, there exists 
$C > 0$ such that any global strong 
solution $u$ of~\eqref{mckean-vlasov} satisfies for any $t \in (0,1]$,
\begin{equation*}
\|u_t\|_{H^\theta_q} \leq Ct^{-\frac{\beta}{2} - \frac{d}{2q^*}},
\end{equation*}
where $C$ does not depend on $u_0$.
Moreover, if $k=\infty$, then for any $\theta\in(0,1)$ and any $\beta \in (\theta,1)$, there exists $C>0$ such that for any $t\in(0,1]$,
\begin{equation}\label{eq:ut-Wtheta1}
    \|u_t\|_{W^\theta_1}
    \leq C
    t^{-\frac{\beta}{2}},
\end{equation}
where $C$ does not depend on $u_0$.
\end{proposition}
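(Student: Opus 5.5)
We plan to prove the bound by duality, using the Duhamel formula \eqref{eq:duhamel-u} of Lemma~\ref{lem:Pu}\ref{item:Duhamel-mu-v} applied with $v=u$ and $\mu_0=u_0$, so that $\mu_t=u_t$. The quantity $\|u_t\|_{H^\theta_q}$ is the supremum of $\langle u_t,(I-\Delta)^{\theta/2}f\rangle$ over $f$ smooth with $\|f\|_{L^{q^*}}\le1$. It is therefore enough to bound the two terms of \eqref{eq:duhamel-u} tested against $g\coloneqq(I-\Delta)^{\theta/2}f$. Since $\theta<\beta$, we first replace the Bessel derivative by Hölder regularity of order $\beta$ of the kernel in $y$. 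Concretely, we use $\|h\|_{H^\theta_q}\lesssim\|h\|_{B^{\beta}_{q,\infty}}$ for $\beta>\theta$, and we control the Besov norm through $\sup_{|z|\le1}|z|^{-\beta}\|h(\cdot+z)-h\|_{L^q}+\|h\|_{L^q}$. This reduces the problem to bounding $\|u_t\|_{L^q}$ and $\|u_t(\cdot+z)-u_t\|_{L^q}$ by $C t^{-\frac\beta2-\frac d{2q^*}}$, with a factor $|z|^\beta$ in the second case.

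\emph{Free term.} For $x\mapsto\int\hat p_t(x,y)u_0(\dd x)$, Lemma~\ref{lem:OU-bounds}\ref{item:heat-holder} with $j=0$ gives $|\hat p_t(x,y+z)-\hat p_t(x,y)|\le C|z|^\beta t^{-\beta/2}(g^F_\kappa(t,x,y+z)+g^F_\kappa(t,x,y))$. By Minkowski's inequality in $L^q_y$ and since $\|g^F_\kappa(t,x,\cdot)\|_{L^q}\lesssim t^{-\frac d2(1-\frac1q)}=t^{-\frac d{2q^*}}$ uniformly in $x$, this term is bounded by $C|z|^\beta t^{-\frac\beta2-\frac d{2q^*}}$. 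The constant is independent of $u_0$ because $u_0$ is a probability measure.

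\emph{Interaction term.} Here we use Lemma~\ref{lem:OU-bounds}\ref{item:heat-holder} with $j=1$, which gives a factor $|z|^\beta (t-r)^{-\frac{1+\beta}{2}}$ times Gaussians. We also need $\|K\ast u_r\|_{L^\infty}\le\|K\|_{L^k}\|u_r\|_{k^*}$ with $u\in\mathcal C_1$. A direct bound would give $\int_0^t (t-r)^{-\frac{1+\beta}2-\frac d{2q^*}} r^{-\frac d{2k}}\dd r$, which is not integrable near $r=t$ when $\frac{1+\beta}{2}+\frac d{2q^*}\ge1$. To fix this, we put the integrability on $u_r$ instead of on the Gaussian. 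We write $\int |K\ast u_r(x)||\nabla_x\hat p_{t-r}(x,\cdot)|u_r(x)\dd x$ and bound its $L^q_y$ norm using Young or Hölder with $u_r\in L^{k^*}$ (or a better norm). The gain is that the Gaussian now maps $L^{s}$ into $L^q$ with a smaller singularity. The hypothesis $\frac1{q^*}+\frac1k<\frac2d$ is exactly what makes the resulting time integral $\int_0^t (t-r)^{-\frac{1+\beta}2-\frac d2(\cdot)}r^{-(\cdot)}\dd r$ converge, and we expect it to produce the claimed power $t^{-\frac\beta2-\frac d{2q^*}}$ after combining exponents. If one pass is not enough, we would bootstrap: first obtain $\|u_r\|_{L^{p}}\lesssim r^{-\frac d{2p^*}}$ for all $p$ from the $\theta=0$ version (the same argument with $j=1$ and no Hölder increment), then reinsert this into the Hölder estimate. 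The exponent bookkeeping so that the singularities at $r=0$ and $r=t$ are both integrable is the main obstacle. To keep $C$ independent of $u_0$, we must use only the $\mathcal C_1$ bound, and we expect it to hold uniformly in $u_0$ by the contraction construction in Appendix~\ref{app:ex-uniq}/\cite{HRW25}.

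\emph{Case $k=\infty$.} We work with the $L^1$ norm in $y$. Now $\|K\ast u_r\|_\infty\le\|K\|_\infty$, and $\int\|\nabla_x\hat p_{t-r}(x,\cdot+z)-\nabla_x\hat p_{t-r}(x,\cdot)\|_{L^1}u_r(x)\dd x\lesssim|z|^\beta(t-r)^{-\frac{1+\beta}2}$. This is integrable in $r$ since $\beta<1$. The free term gives $|z|^\beta t^{-\beta/2}$, and the embedding of the $B^\beta_{1,\infty}$-type bound into $W^\theta_1$ for $\theta<\beta$ then yields \eqref{eq:ut-Wtheta1}.
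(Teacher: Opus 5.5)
Your overall architecture is the paper's. You dominate $\|u_t\|_{H^\theta_q}$ by an $L^q$ difference-quotient norm of order $\beta>\theta$ (the paper uses the Slobodeckij norm of order $\theta+\varepsilon<\beta$, to the same effect), insert the Duhamel formula \eqref{eq:duhamel-u}, and use Lemma~\ref{lem:OU-bounds}\ref{item:heat-holder} with $j=0$ and $j=1$. Your free term and your $k=\infty$ case are correct and coincide with the paper's.

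The gap is the interaction term, together with the zero-order term $\|u_t\|_{L^q}$, which you reduce to but never bound. Your first option fails in general: keeping $u_r$ in $L^{k^*}$ and letting the Gaussian map $L^{k^*}\to L^q$ makes the singularity at $r=t$ equal to $(t-r)^{-\frac{1+\beta}{2}-\frac d2(\frac1{q^*}-\frac1k)}$. For $d=2$, $k=3$, $q=2$ (admissible, since $\frac12+\frac13<1$) and $\theta=0.7$, you need $\beta>0.7$, whereas integrability requires $\beta<\frac23$. When $q<k^*$ there is no $L^{k^*}\to L^q$ bound at all.

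Your fallback points the right way, but the $\theta=0$ step does not close on its own. Keeping $u_r$ in $L^q$ yields the Volterra inequality $\|u_t\|_{L^q}\le Ct^{-\frac d{2q^*}}+C\int_0^t(t-r)^{-\frac12}r^{-\frac d{2k}}\|u_r\|_{L^q}\,\dd r$. Its Gr\"onwall closure presupposes that $r\mapsto r^{-\frac d{2k}}\|u_r\|_{L^q}$ is locally integrable. Membership in $\mathcal{C}_1$ gives this only for $q\le k^*$ (by interpolation between $L^1$ and $L^{k^*}$), while the hypothesis allows $q>k^*$, as in the example above. Keeping $u_r$ in $L^{k^*}$ instead works in one step only when $\frac1{q^*}<\frac1d+\frac1k$.

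The missing ingredient is already available, with no Gr\"onwall argument. By duality and the ultracontractivity of the decoupled semigroup, Lemma~\ref{lem:Pu}\ref{item:semigroup-v-bound} with $p_1=q^*$ gives $\|u_r\|_{L^q}\le\|P^u_{0,r}\|_{L^{q^*}\to L^\infty}\le Cr^{-\frac d{2q^*}}$ for every $r\in(0,1]$; this is \eqref{ut-time-reg}. It bounds the zero-order term directly.

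In the interaction term, put all of the $L^q$-integrability on $u_r$ and none on the Gaussian. The $L^q_y$-norm of the contribution at time $r$ is at most $C|z|^\beta(t-r)^{-\frac{1+\beta}2}\|K\ast u_r\|_{L^\infty}\|G^{\kappa,F,*}_{t-r}u_r\|_{L^q}$. By duality and Lemma~\ref{lem:OU-bounds}\ref{item:sgG-Lp}, $\|G^{\kappa,F,*}_{t-r}\|_{L^q\to L^q}\le C$. So the whole term is at most $C|z|^\beta\int_0^t(t-r)^{-\frac{1+\beta}2}r^{-\frac d{2k}-\frac d{2q^*}}\,\dd r$. The singularity at $r=t$ is integrable because $\beta<1$. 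The hypothesis $\frac1{q^*}+\frac1k<\frac2d$ is exactly integrability at $r=0$. The integral equals $Ct^{\frac{1-\beta}2-\frac d{2k}-\frac d{2q^*}}\le Ct^{-\frac\beta2-\frac d{2q^*}}$, since $k>d$ and $t\le1$.

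For independence from $u_0$, you do not need to appeal to the construction in \cite{HRW25}. The constant in Lemma~\ref{lem:Pu}\ref{item:semigroup-v-bound} depends on $u$ only through $\|K\ast u\|_{L^m([0,1];L^\infty)}$ for some $m>2$, hence only through $\sup_{r\le1}r^{\frac d{2k}}\|u_r\|_{k^*}$. Using \eqref{eq:duhamel-u} with $\|u_r\|_{L^1}=1$ gives the linear inequality $\|u_t\|_{k^*}\le Ct^{-\frac d{2k}}+C\|K\|_{L^k}\int_0^t(t-r)^{-\frac12-\frac d{2k}}\|u_r\|_{k^*}\,\dd r$. Volterra--Gr\"onwall, which is legitimate here since $u\in\mathcal{C}_1$, then bounds $\|u_t\|_{k^*}$ by $Ct^{-\frac d{2k}}$ with $C$ independent of $u_0$. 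With these steps supplied, your argument becomes the paper's.
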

\begin{proof}
We apply the lemmas of the previous subsection to the flow $v=u$, the marginals of the McKean-Vlasov solution \eqref{mckean-vlasov}; this is admissible since
$u\in\mathcal{C}_{T}$ for every $T>0$, and for this choice the marginal flow of
\eqref{eq:decoupled-sde} started from $u_0$ is $u$ itself, so that
$\mu_t=P^{u,*}_{0,t}u_0=u_t$ in Lemma~\ref{lem:Pu}\ref{item:Duhamel-mu-v}. Let
 $(\theta, q) \in (0,1) \times (1,\infty)$.
In order to justify that we can bound $\|u_t\|_{H^\theta_q}$ by the integral quantity~\eqref{eq:Bessel-Slobod} below, let us recall that $H^\theta_{q} = F^\theta_{q,2}$, where $F^\theta_{q,2}$ is a Triebel-Lizorkin space, see \cite[p.8]{RunstSickel}. For any $\varepsilon>0$, we have $F^{\theta+\varepsilon}_{q,q}\hookrightarrow F^\theta_{q,2} = H^\theta_{q}$, see \cite[Proposition, p.29]{RunstSickel}. 
Using this embedding and the fact that $F^{\theta+\varepsilon}_{q,q}$ is a Slobodejkij's space (\cite[p.12 and Proposition p.14]{RunstSickel}), it comes that
there exists $C>0$ (depending only on $q,\theta,d,\varepsilon$) such that
\begin{align}
	\label{eq:Bessel-Slobod}
    \|u_t\|_{H^\theta_q} & \leq C \|u_t\|_{L^q} + C
    \left( \int_{\R^d \times \R^d} 
    \frac{|u_t(x) - u_t(y)|^q}{|x-y|^{d + (\theta+\varepsilon) q}}  \dd x\, \dd y
    \right)^\frac{1}{q} \\ 
    & \leq C \|u_t\|_{L^q} + C \left(
        \int_{|x-y| \leq 1} 
        \frac{|u_t(x) - u_t(y)|^q}{|x-y|^{d + (\theta+\varepsilon) q}}  \dd x\, \dd y
    + \int_{|x-y| > 1} \frac{|u_t(x) -
     u_t(y)|^q}{|x-y|^{d + (\theta+\varepsilon) q}} \dd x\, \dd y 
      \right)^\frac{1}{q} .\nonumber
\end{align}
Then by the triangle inequality, by symmetry and using that $d+(\theta+\varepsilon) q>d$,
\begin{align}
	\label{eq:ut-hs}
        \|u_t\|_{H^\theta_q}& \leq C \|u_t\|_{L^q} + C
    \left( \int_{|x-y| \leq 1} 
        \frac{|u_t(x) - u_t(y)|^q}{|x-y|^{d + (\theta+\varepsilon) q}}  \dd x\, \dd y
    + \int_{\R^d} |u_t(x)|^{ q} \int_{ |x-y| > 1} 
    \frac{1}{|x-y|^{d + (\theta+\varepsilon) q}} \dd y\,  \dd x\right)^\frac{1}{q} \nonumber \\
    & \leq C \|u_t\|_{L^q} + C \left(
        \int_{|x-y| \leq 1} 
        \frac{|u_t(x) - u_t(y)|^q}{|x-y|^{d + (\theta+\varepsilon) q}}  \dd x\, \dd y
    \right)^{\frac{1}{q}}.
\end{align}
We first bound $\|u_t\|_{L^q}$ using the ultracontractivity of the decoupled
semigroup.
Indeed, $u_t=P^{u,*}_{0,t}u_0$, so by duality, 
Lemma~\ref{lem:Pu}\ref{item:semigroup-v-bound} (estimate \eqref{eq:Pu-smoothing} with $p_1=q^*$) and 
using that $\|u_0\|_{TV}=1$, there
exists $C>0$ such that for any $t\in(0,1]$,
 \begin{equation}\label{ut-time-reg}
 \|u_t\|_{L^q} = \sup_{ \|f\|_{L^{q^*}} \leq 1}
 |\langle P^{u,*}_{0,t} u_0, f \rangle|
 \leq \|u_0\|_{TV} \|P^u_{0,t}\|_{L^{q^*}\to L^{\infty}}
 \leq C \|u_0\|_{TV} t^{-\frac{d}{2q^*}} = C
 t^{-\frac{d}{2q^*}},
 \end{equation}
 where $C$ does not depend on $u_0$. Indeed, 
 the constant $C$ in \eqref{eq:Pu-smoothing} depends on $u$ via the $L^q([0,T];L^\infty)$ norm of $K\ast u$, which at its turn is bounded by 
 $\|u_t\|_{L^{k^*}}$. Duhamel formula \eqref{eq:duhamel-u}, Lemma~\ref{lem:OU-bounds}\ref{item:sgG-Lp}, together with Grönwall's inequality give that $\|u_t\|_{L^{k^*}}$ is bounded by a constant depending on $\|u_0\|_{TV}=1$.

Now to bound the integral term in~\eqref{eq:ut-hs}, we use that for $t>0$, the density $u_t$ satisfies the Duhamel formula \eqref{eq:duhamel-u}, see Lemma~\ref{lem:Pu}\ref{item:Duhamel-mu-v}.
Besides, Lemma~\ref{lem:OU-bounds}\ref{item:heat-holder} (with $j=0$ for $\hat p$ and $j=1$ for $\nabla_x\hat p$) gives that for almost any $y_1,y_2 \in \R^d$ and any
 $\beta \in (0,1)$, we have
\begin{align*}
    &|u_t(y_1) - u_t(y_2)|\\
    &\quad \leq \int |\hat{p}_{t}(x,y_1) - \hat{p}_{t}(x,y_2)|\, u_0(\dd x) +
    \int_0^t \int u_r(x) |K\ast u_r(x)||\nabla_x \hat{p}_{t-r}(x,y_1) - 
    \nabla_x \hat{p}_{t-r}(x,y_2)|\, \dd x\, \dd r\\
    &\quad \leq C |y_1-y_2|^{\beta} t^{-\frac{1}{2}\beta} \int 
     (g_k^F(t,x,y_1)+ g_\kappa^F(t,x,y_2))\, u_0(\dd x)\\
    &\qquad + C |y_1-y_2|^{\beta} \int_0^t 
    (t-r)^{-\frac{1}{2}(1+\beta)} \int u_r(x) |K\ast u_r(x)| 
    (g_\kappa^F(t-r,x,y_1) + g_\kappa^F(t-r,x,y_2))\, \dd x\, \dd r\\
    &\quad\leq C |y_1-y_2|^{\beta} t^{-\frac{1}{2}\beta}
     \int  (g_\kappa^F(t,x,y_1)+ g_\kappa^F(t,x,y_2))\, u_0(\dd x)\\
    & \qquad+ C |y_1-y_2|^{\beta} \int_0^t 
    (t-r)^{-\frac{1}{2}(1+\beta)} \|K\|_{L^k} \|u_r\|_{L^{k^*}}  
    \int u_r(x) (g_\kappa^F(t-r,x,y_1) + g_\kappa^F(t-r,x,y_2))\,  \dd x\, \dd r.
\end{align*}
Using the above inequality, inequality \eqref{ut-time-reg} and 
for $1>\beta>\theta+\varepsilon >0$,
 it comes that
\begin{align*}
    &\left(\int_{|x-y| \leq 1} 
        \frac{|u_t(x) - u_t(y)|^q}{|x-y|^{d + (\theta+\varepsilon) q}} \dd x\,
         \dd y  \right)^\frac{1}{q}\\
    & \leq C t^{-\frac{1}{2}\beta} \left(\int_{|x-y| \leq 1} \frac{|x-y|^{\beta q}}{|x-y|^{d + (\theta+\varepsilon) q}}  \Big( \int (g_\kappa^F(t,z,x)+ g_\kappa^F(t,z,y))\, u_0(\dd z) \Big)^q  \dd x\, \dd y\right)^\frac{1}{q} \\
&\, + C \bigg(\int_{|x-y| \leq 1} \frac{|x-y|^{\beta q}}{|x-y|^{d + (\theta+\varepsilon) q}} 
	\Big(\int_0^t \frac{\|K\|_{L^k} \|u_r\|_{L^{k^*}}}{(t-r)^{\frac{1}{2}(1+\beta)}} \int u_r(z) (g_\kappa^F(t-r,z,x) + g_\kappa^F(t-r,z,y)) \dd z\, \dd r\Big)^q \dd x\, \dd y \bigg)^\frac{1}{q}.
\end{align*}
Denote by $G^{F,\kappa,*}$ the dual of the semigroup $G^{F,\kappa}$
 defined in~\eqref{eq:gaussian-kernel}, which acts on measures. Using the symmetry in $x$ and $y$,
  that $\int_{|z| \leq 1} |z|^{\beta q - d - (\theta+\varepsilon) q} \dd z < \infty$ and
  \eqref{ut-time-reg} with $q=k^*$, we obtain
\begin{align*}
    \left(\int_{|x-y| \leq 1} 
        \frac{|u_t(x) - u_t(y)|^q}{|x-y|^{d + (\theta+\varepsilon) q}} \dd x\,
         \dd y  \right)^\frac{1}{q}
&\leq C t^{-\frac{1}{2}\beta} \left(\int \Big(\int g_\kappa^F(t,z,x)\, u_0(\dd z) \Big)^q \dd x\right)^{\frac{1}{q}} \\
&\quad + C \int_0^t \frac{\|u_r\|_{L^{k^*}}}{(t-r)^{\frac{1}{2}(1+\beta)}}  
    \left(\int \left(\int u_r(z) g_\kappa^F(t-r,z,x)\, \dd z\right)^q \dd x\right)^{\frac{1}{q}} \dd r\\
& \leq C t^{-\frac{1}{2}\beta} \| G^{F,\kappa,*}_{t} u_0\|_{L^q} +
 C \int_0^t (t-r)^{-\frac{1}{2}(1+\beta)} r^{-\frac{d}{2k}} 
    \| G^{F,\kappa,*}_{t-r} u_r\|_{L^q} \,\dd r .
\end{align*}
By duality, we have $\| G^{F,\kappa,*}_{t} u_0\|_{L^q} \leq \|u_0\|_{TV} 
\|G^{F,\kappa}_{t}\|_{L^{q^*} \to L^{\infty}}$ and 
similarly $\| G^{F,\kappa,*}_{t-r} u_r\|_{L^q} 
\leq \lVert u_{r} \rVert_{L^q}  \|G^{F,
\kappa}_{t-r}\|_{L^{q^*} \to L^{q^*}}$. 
Hence, choosing $q$ such that $\frac{1}{q^*}+\frac{1}{k}<\frac{2}{d}$ and 
applying
 Lemma~\ref{lem:OU-bounds}\ref{item:sgG-Lp} gives
\begin{align}\nonumber
\left(\int_{|x-y| \leq 1} 
        \frac{|u_t(x) - u_t(y)|^q}{|x-y|^{d + (\theta+\varepsilon) q}} \dd x\,
         \dd y  \right)^\frac{1}{q}
\nonumber& \leq C t^{-\frac{1}{2}\beta-\frac{d}{2 q^*}} + 
C \int_0^t (t-r)^{-\frac{1}{2}(1+\beta)} r^{-\frac{d}{2q^*}-
    \frac{d}{2k}} \dd r  \\
  \nonumber  &\leq C t^{-\frac{1}{2}\beta-\frac{d}{2 q^*}} + 
C t^{\frac{1}{2}(1-\beta)-\frac{d}{2q^*}-
    \frac{d}{2k}} \\ \label{ut-time-reg2}
    &\leq C t^{-\frac{1}{2}\beta-\frac{d}{2 q^*}}.  
\end{align}
Plugging~\eqref{ut-time-reg} and~\eqref{ut-time-reg2} into~\eqref{eq:ut-hs}, we obtain that for any $(\theta, q) \in (0,1) \times (1,\infty)$ which satisfy $\frac{1}{q^*}+\frac{1}{k}<\frac{2}{d}$ and 
 for any $\beta \in (\theta,1)$, there exists $C>0$ such that for any $t \in (0,1]$,
\begin{equation}\label{ut-time-reg3}
\|u_t\|_{H^\theta_q} \leq C
 t^{-\frac{1}{2}\beta-\frac{d}{2 q^*}}.
\end{equation}

Assume now that $k=\infty$, and fix $\theta\in(0,1)$
and $\beta \in (\theta,1)$.
We prove \eqref{eq:ut-Wtheta1} directly by estimating
the Slobodeckij seminorm.
 First, since $u_r$ is a probability density, and $K \in L^\infty$, Young's inequality gives
\(\|u_r(K\ast u_r)\|_{L^1} \leq \|K\|_{L^\infty}.\)
Repeating the computation leading to \eqref{ut-time-reg2}
with $q=1$, with denominator $|x-y|^{d+\theta}$,
and with exponent $\beta$ in the kernel
difference estimates, we obtain
\begin{align*}
    \iint_{|x-y|\leq 1}
    \frac{|u_t(x)-u_t(y)|}{|x-y|^{d+\theta}}
    \,\dd x\,\dd y
    &\leq C t^{-\frac{\beta}{2}}
    + C\|K\|_{L^\infty}
    \int_0^t
    (t-r)^{-\frac{1+\beta}{2}}
    \,\dd r \\
    &\leq C t^{-\frac{\beta}{2}}
    + C
    t^{\frac{1-\beta}{2}}.
\end{align*}
Here the spatial integral is finite because
$\int_{|h|\leq 1}|h|^{-d+(\beta-\theta)}\,\dd h<\infty$,
and the time integral is finite because
$\beta<1$.
Moreover,
\[\iint_{|x-y|>1}
    \frac{|u_t(x)-u_t(y)|}{|x-y|^{d+\theta}}
    \,\dd x\,\dd y
    \leq
    2\|u_t\|_{L^1}
    \int_{|h|>1}|h|^{-d-\theta}\,\dd h
    \leq C.\]
Summing the obtained bound on the Slobodeckij seminorm and the $L^1$ norm which is equal to $1$ and using $t\leq 1$ proves
\eqref{eq:ut-Wtheta1}.
\end{proof}

\begin{remark}
\label{rem:track-K}
    For latter use, tracking the dependency 
    in $\|K\|_{L^k}$ in the constants denoted by $C$ is important.
    In doing so for \eqref{ut-time-reg3}, we see that the constant
    $C$ is of form $C_0(1+\|K\|_{L^k})$, where 
    $C_0$ is independent of $K$.
\end{remark}

Now let us state the Lipschitz regularity of $K \ast u_t$. 
The proof is based on Proposition~\ref{prop:lip-bound1} 
together with Schauder-type estimates for $\nabla \hat{P}_t$. This result 
is a key ingredient in the proof of Theorem~\ref{thm:main}.
In particular, it is essential that the bound in \eqref{ut-lip} does not
depend on the initial condition $u_0$, so that, in the proof of Theorem~\ref{thm:main},  it can be iterated on any time interval of length $1$.

\begin{proposition}\label{prop:lip-bound} 
Suppose Assumption~\ref{assum:potentials} holds and that $u_0\in\mathcal{P}_2(\R^d)$. Then there exists $C>0$ such that for any $t \in (0,1]$, any global strong 
solution $u$ of~\eqref{mckean-vlasov} satisfies
    \begin{equation}\label{ut-lip}
    \|K\ast u_t\|_{W^1_\infty} \leq C \|K\|_{L^k} t^{-\frac12 -\frac{d}{2k}},
    \end{equation}
where $C$ does not depend on $u_0$.
\end{proposition}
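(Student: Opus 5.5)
The plan is to reduce the $W^1_\infty$ bound on $K\ast u_t$ to an $L^{k^*}$ translation (or gradient) estimate on $u_t$, so that the factor $\|K\|_{L^k}$ comes out linearly by duality. For the sup part, Young/duality gives
\[
\|K\ast u_t\|_{L^\infty}\le \|K\|_{L^k}\|u_t\|_{k^*}\le C\|K\|_{L^k}t^{-\frac d{2k}},
\]
using \eqref{ut-time-reg} with $q=k^*$, whose constant does not depend on $u_0$. Since $t\le1$, this is bounded by $C\|K\|_{L^k}t^{-\frac12-\frac d{2k}}$. For the Lipschitz part, for $h\in\R^d$,
\[
|K\ast u_t(x+h)-K\ast u_t(x)|\le \|K\|_{L^k}\,\|u_t(\cdot+h)-u_t\|_{L^{k^*}},
\]
so it suffices to prove $\|u_t(\cdot+h)-u_t\|_{L^{k^*}}\le C|h|\,t^{-\frac12-\frac d{2k}}$, equivalently $\|\nabla u_t\|_{L^{k^*}}\le Ct^{-\frac12-\frac d{2k}}$, with $C$ independent of $u_0$. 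When $k=\infty$ the same reduction holds with $k^*=1$.

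To estimate $\nabla u_t$, I restart the dynamics at time $t/2$. The flow $(u_{t/2+s})_s$ is the marginal flow of \eqref{eq:decoupled-sde} started from $u_{t/2}$, so Lemma~\ref{lem:Pu}\ref{item:Duhamel-mu-v} gives
\[
u_t(y)=\int \hat p_{t/2}(x,y)u_{t/2}(x)\,\dd x-\int_{t/2}^t\!\!\int f_r(x)\cdot\nabla_x\hat p_{t-r}(x,y)\,\dd x\,\dd r,\qquad f_r:=u_r\,(K\ast u_r).
\]
For the first term, differentiate in $y$ and use $|\nabla_y\hat p|\le C t^{-1/2}g^F_{\kappa_2}$ from Lemma~\ref{lem:OU-bounds}\ref{item:heat-bounds}. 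Together with the $L^{k^*}\to L^{k^*}$ bound for $G^{\kappa,F,*}$ from Lemma~\ref{lem:OU-bounds}\ref{item:sgG-Lp}, this gives the bound $Ct^{-1/2}\|u_{t/2}\|_{L^{k^*}}\le Ct^{-\frac12-\frac d{2k}}$.

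The second term is the main obstacle. A naive bound on $\nabla_y\nabla_x\hat p_{t-r}$ gives $(t-r)^{-1}$, which is not integrable near $r=t$. I would resolve this by a Schauder-type argument that trades spatial regularity of $f_r$ for time integrability.

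First, by Proposition~\ref{prop:lip-bound1} with $q=k^*$ (the condition $\frac1{q^*}+\frac1k=\frac2k<\frac2d$ holds since $k>d$), $u_r\in H^\theta_{k^*}$ with norm $\lesssim r^{-\frac\beta2-\frac d{2k}}$. Hence $u_r$ is $\theta$-Hölder in the $L^{k^*}$-translation sense. Next, $K\ast u_r$ is bounded by $\|K\|_{L^k}\|u_r\|_{k^*}$ and is $\theta$-Hölder with constant $\|K\|_{L^k}[u_r]_{\theta,k^*}$. So $f_r$ has an $L^{k^*}$ modulus of continuity $\lesssim |h|^\theta$, with a negative power of $r$ that is harmless on $r\in[t/2,t]$.

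I would then write the $y$-derivative of the Duhamel integral, or more simply the $y$-translation difference, as an integral of $f_r(x)-f_r(\cdot)$ against $\nabla_y\nabla_x\hat p_{t-r}$. This uses a cancellation of the form $\int \nabla_y\nabla_x \hat p_{t-r}(x,y)\,\dd x\approx 0$ up to a lower-order term coming from $F$; the relevant second-derivative and Hölder kernel estimates are those of \cite{MPZ21} underlying Lemma~\ref{lem:OU-bounds}. Since the map $x\mapsto\theta_{t-r}(x)$ is a bi-Lipschitz diffeomorphism close to the identity, the non-translation-invariance only produces bounded extra factors.

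This gives a factor $(t-r)^{-1+\theta/2}$, which is integrable. The contribution is therefore bounded by
\[
C\|K\|_{L^k}\,t^{\theta/2}\,t^{-\frac\beta2-\frac dk}\le Ct^{-\frac12-\frac d{2k}}
\]
for $\beta$ close to $\theta$, using $\frac d{2k}<\frac12$. If this cancellation proves delicate, a fallback is to interpolate: use Lemma~\ref{lem:OU-bounds}\ref{item:heat-holder} with $j=1$ and exponent $\beta'<1$ to get $u_t$ in a Bessel space of order just below $1$, and then iterate the same Duhamel argument once more with the improved regularity of $f_r$.

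Finally, all constants come from Lemma~\ref{lem:OU-bounds}, Lemma~\ref{lem:Pu} and Proposition~\ref{prop:lip-bound1}. By Remark~\ref{rem:track-K} these depend on $K$ only through a factor $(1+\|K\|_{L^k})$ and not on $u_0$. The overall factor $\|K\|_{L^k}$ comes from the duality step at the start of the argument. The case $k=\infty$ is handled identically, with \eqref{eq:ut-Wtheta1} replacing the $H^\theta_q$ bound.
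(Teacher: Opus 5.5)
Several parts of your plan are sound: the reduction to $\|\nabla u_t\|_{L^{k^*}}$ by duality, the restart at $t/2$, the bound on the free term, and the final bookkeeping $t^{\theta/2}t^{-\beta/2-d/k}\lesssim t^{-\frac12-\frac d{2k}}$. The argument breaks at the decisive smoothing estimate. Writing $[f]_{\theta,k^*}\coloneqq\sup_h|h|^{-\theta}\|f(\cdot+h)-f\|_{L^{k^*}}$, you need
\[
\Big\|\nabla_y\int_{\R^d}\nabla_x\hat p_\tau(x,\cdot)\cdot f(x)\,\dd x\Big\|_{L^{k^*}}\lesssim \tau^{-1+\frac\theta2}\,[f]_{\theta,k^*}.
\]
You assert this rather than prove it, and the mechanism you sketch does not work as stated. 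You subtract $f_r(y)$ and claim the non-translation-invariance only costs bounded factors because $\theta_{t-r}$ is close to the identity. On $\R^d$ with a merely Lipschitz, hence unbounded, $F$ this is false. As a function of $x$, $g^F_\kappa(\tau,x,y)$ is concentrated in a $\sqrt\tau$-ball around $\theta_\tau^{-1}(y)$, and $|\theta_\tau^{-1}(y)-y|\approx\tau|F(y)|$ is unbounded in $y$. So $\int g^F_\kappa(\tau,x,y)|f(x)-f(y)|\,\dd x$ involves $y$-dependent displacements much larger than $\sqrt\tau$. An $L^{k^*}$-translation modulus controls only constant shifts, and cannot bound this by $\tau^{\theta/2}[f]_{\theta,k^*}$; a narrow bump placed far out is a counterexample. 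In addition, the mixed bound $|\nabla_y\nabla_x\hat p_\tau|\lesssim\tau^{-1}g^F_\kappa$ you rely on is not among the estimates of Lemma~\ref{lem:OU-bounds}. Your fallback is circular. Proposition~\ref{prop:lip-bound1} already gives every $\theta<1$, and Lemma~\ref{lem:OU-bounds}(ii) requires $\beta<1$. Iterating therefore never reaches $W^1_{k^*}$ without this same estimate.

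The step can be repaired, because the cancellation is exact rather than approximate. For every $y$, $\int\nabla_x\hat p_\tau(x,y)\,\dd x=0$, since it is the integral of the gradient of a Gaussian-dominated function, and this persists after applying $\nabla_y$. You may therefore subtract $f(\theta_\tau^{-1}(y))$ instead of $f(y)$. Next, bound $|\nabla_y\nabla_x\hat p_\tau(x,y)|\le C\tau^{-1}g^F_{\kappa'}(\tau,x,y)$ using Chapman--Kolmogorov and Lemma~\ref{lem:OU-bounds}(i). Then substitute $y=\theta_\tau(w)$ and use $|\theta_\tau(w)-\theta_\tau(x)|\ge c|w-x|$. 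Minkowski's inequality gives
\[
\tau^{-1}\int\tau^{-d/2}e^{-c|h|^2/\tau}\,\|f(\cdot+h)-f\|_{L^{k^*}}\,\dd h\;\lesssim\;\tau^{-1+\theta/2}[f]_{\theta,k^*}.
\]

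The paper avoids kernel cancellation altogether. For $T_{t-r}f(y)=\int\nabla_x\hat p_{t-r}(x,y)\cdot f(x)\,\dd x$ it proves two operator bounds:
\begin{itemize}
\item $L^{k^*}\to W^1_{k^*}$ with norm $(t-r)^{-1}$, via Chapman--Kolmogorov;
\item $W^1_{k^*}\to W^1_{k^*}$ with norm $(t-r)^{-1/2}$, via integration by parts in $x$.
\end{itemize}
Complex interpolation then gives $H^\theta_{k^*}\to W^1_{k^*}$ with norm $(t-r)^{-1+\theta/2}$, and Kato--Ponce handles the product $u_r\,K\ast u_r$. The paper also integrates Duhamel from $0$ rather than restarting at $t/2$. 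Once repaired, your kernel route needs only Besov-type regularity of $f_r$, which your direct product estimate supplies without Kato--Ponce. It also covers $k=\infty$ verbatim, whereas the paper must switch to real interpolation and a different Leibniz rule there.
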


\begin{proof}  
Young's convolution inequality  gives 
 \(\|  K \ast u_t\|_{W^1_\infty} \lesssim  \|K\|_{L^k}
  \| u_t\|_{W^1_{k^*}}.\) Hence it is enough 
  to bound \(\| u_t\|_{W^1_{k^*}}\).\\
\textbf{
Case $k \in (1,\infty)$: 
}Using Duhamel's formula \eqref{eq:duhamel-u}, 
\begin{align}
\label{ut-W1k}
\|u_{t}\|_{W^1_{k^*}} &\leq \|\hat{P}^*_t u_0\|_{W^1_{k^*}} 
+ \int_0^t \left\|\int_{\R^d} \nabla_x \hat{p}_{t-r}(x,\cdot) 
\cdot(u_r(x) K\ast u_r(x))\, \dd x
\right\|_{W^1_{k^*}} \dd r .
\end{align}
For the initial condition part of \eqref{ut-W1k}, we proceed as follows: using Lemma~\ref{lem:OU-bounds}\ref{item:OU-Lp},
\begin{equation*}
\|\hat{P}^*_t u_0\|_{L^{k^*}} \leq C \|\hat{P}_t\|_{L^{k} \to L^{\infty}} \|u_0\|_{TV} 
\leq C t^{-\frac{d}{2k}} \|u_0\|_{TV} = C t^{-\frac{d}{2k}}.
\end{equation*}
Then, using Lemma~\ref{lem:OU-bounds}\ref{item:heat-bounds} and
Lemma~\ref{lem:OU-bounds}\ref{item:sgG-Lp}, 
we have 
\begin{equation*}
\|\nabla \hat{P}^*_t u_0\|_{L^{k^*}} = \left\|\int\nabla_y\hat p_t(x,\cdot)\, u_0(\dd x)
\right\|_{L^{k^*}}\le Ct^{-\frac12}\|G^{F,\kappa,*}_t u_0\|_{L^{k^*}}\le Ct^{-\frac12-\frac d{2k}}.
\end{equation*}
Hence, combining both bounds, there exists $C>0$ such that for any $t \in (0,1]$,
\begin{equation}\label{hatP-bound-initial}
\|\hat{P}^*_t u_0\|_{W^1_{k^*}} \leq C t^{-\frac12-\frac d{2k}}.
\end{equation}

For the second term in the right-hand side of~\eqref{ut-W1k}, we will use 
Proposition~\ref{prop:lip-bound1} together with bounds 
on the semigroup $\hat{P}^*_t$ from Lemma~\ref{lem:OU-bounds}.
Take $f \in L^{k^*}$, and define 
$T_{t-r}f(y)\coloneqq\int_{\R^d}\nabla_x\hat p_{t-r}(x,y)\cdot f(x)\,\dd x$.
Using Lemma~\ref{lem:OU-bounds}\ref{item:heat-bounds} and
Lemma~\ref{lem:OU-bounds}\ref{item:sgG-Lp}, it comes
\begin{equation}\label{reg1Lk} 
\|T_{t-r}f\|_{L^{k^*}} =
\left\| \int_{\R^d} \nabla_x 
\hat{p}_{t-r}(x,\cdot) \cdot f(x)\, \dd x\right\|_{L^{k^*}} 
\leq C (t-r)^{-\frac12} \|G^{F,\kappa,*}_{t-r} f\|_{L^{k^*}} 
\leq C (t-r)^{-\frac12} \|f\|_{L^{k^*}}.
\end{equation}
For $f \in W^1_{k^*}$,
integrating by parts in the $x$-variable and using Lemma~\ref{lem:OU-bounds}\ref{item:sgG-Lp}, we have 
\begin{equation}\label{reg1W1k}
\|T_{t-r}f\|_{L^{k^*}} =
\left\| \int_{\R^d} \nabla_x \hat{p}_{t-r}(x,\cdot) \cdot f(x)\, \dd x\right\|_{L^{k^*}}
\leq C \|G^{F,\kappa,*}_{t-r} f\|_{W^1_{k^*}} \leq C \|f\|_{W^1_{k^*}}.
\end{equation}
For the derivative term,
using the semigroup property,
$\hat p_{t-r}(x,y)
=\int_{\R^d}\hat p_{\frac{t-r}{2}}(x,z)
\hat p_{\frac{t-r}{2}}(z,y)\,\dd z$ 
and taking now the gradient with respect to the $y$-variable gives
\[\nabla_yT_{t-r}f(y)
=\int_{\R^d}
\nabla_y\hat p_{\frac{t-r}{2}}(z,y)
\left(
\int_{\R^d}
\nabla_x\hat p_{\frac{t-r}{2}}(x,z)\cdot f(x)\,\dd x
\right)\dd z.\]
Therefore, for $f \in L^{k^*}$,
using Lemma~\ref{lem:OU-bounds}\ref{item:heat-bounds},
Lemma~\ref{lem:OU-bounds}\ref{item:sgG-Lp}, and then
\eqref{reg1Lk} with time $\frac{t-r}{2}$, we obtain
\begin{align}
\|\nabla_yT_{t-r}f\|_{L^{k^*}}
&\leq C (t-r)^{-\frac12}
\left\|G^{F,\kappa,*}_{\frac{t-r}{2}}
\left(\int_{\R^d}
\nabla_x\hat p_{\frac{t-r}{2}}(x,\cdot)
\cdot f(x)\,\dd x
\right)\right\|_{L^{k^*}}
\nonumber\\
&\leq
C(t-r)^{-\frac12} \|G^{F,\kappa,*}_{\frac{t-r}{2}}\|_{L^{k^*}\to L^{k^*}}
\left\|\int_{\R^d}\nabla_x\hat p_{\frac{t-r}{2}}(x,\cdot)
\cdot f(x)\,\dd x\right\|_{L^{k^*}}
\nonumber\\
&\leq
C(t-r)^{-1}\|f\|_{L^{k^*}}.
\label{reg2Lk}
\end{align}
Combining \eqref{reg2Lk} with \eqref{reg1Lk} yields
\[\|T_{t-r}f\|_{W^1_{k^*}}
\leq
C(t-r)^{-1}\|f\|_{L^{k^*}}.\]
On the other hand, integrating by parts in the $x$-variable gives
\[\nabla_yT_{t-r}f(y)
=-\int_{\R^d}
\nabla_y\hat p_{\frac{t-r}{2}}(z,y)
\left(\int_{\R^d}\hat p_{\frac{t-r}{2}}(x,z)\nabla_x\cdot f(x)\,\dd x
\right)\dd z.\]
Thus, for $f \in W^1_{k^*}$,
using Lemma~\ref{lem:OU-bounds}\ref{item:heat-bounds},
 Lemma~\ref{lem:OU-bounds}\ref{item:sgG-Lp} and 
 Lemma~\ref{lem:OU-bounds}\ref{item:OU-Lp}
\begin{align}
\|\nabla_yT_{t-r}f\|_{L^{k^*}}
&\leq C (t-r)^{-\frac12}
\left\|G^{F,\kappa,*}_{\frac{t-r}{2}}
\left(
\int_{\R^d}
\hat p_{\frac{t-r}{2}}(x,\cdot)\nabla_x
\cdot f(x)\,\dd x
\right)\right\|_{L^{k^*}}
\nonumber\\
&\leq
C(t-r)^{-\frac12} \|G^{F,\kappa,*}_{\frac{t-r}{2}}\|_{L^{k^*}\to L^{k^*}}
\left\| \int_{\R^d}
\hat p_{\frac{t-r}{2}}(x,\cdot)\nabla_x
\cdot f(x)\,\dd x \right\|_{L^{k^*}}
\nonumber\\
&\leq
C(t-r)^{-\frac12} \|\hat{P}^*_{\frac{t-r}{2}}\|_{L^{k^*}\to L^{k^*}} \|f\|_{W^1_{k^*}} \leq
C(t-r)^{-\frac12}\|f\|_{W^1_{k^*}}.
\label{reg2W1k}
\end{align}
Combining \eqref{reg2W1k} with \eqref{reg1W1k}, we get
\[\|T_{t-r}f\|_{W^1_{k^*}}
\leq
C(t-r)^{-\frac12}\|f\|_{W^1_{k^*}}.\]
To sum up, we have obtained the following two bounds for all $f \in W^1_{k^*}$, $g \in L^{k^*}$ and all  $0<r<t\le 1$:
\[\|T_{t-r}g\|_{W^1_{k^*}}
\leq C(t-r)^{-1}\|g \|_{L^{k^*}},
\qquad
\|T_{t-r}f\|_{W^1_{k^*}}
\leq C(t-r)^{-\frac12}\|f\|_{W^1_{k^*}}.\]
These two bounds allow us to use
 the complex interpolation theorem, see \cite[Section 2.5.2]{RunstSickel},
 and obtain, for all $\theta \in (0,1)$ and all $f \in H^\theta_{k^*}$,
\begin{align} \nonumber
    \left\| T_{t-r} f
    \right\|_{W^1_{k^*}} &\leq
    \|T_{t-r}\|_{H^\theta_{k^*} \rightarrow W^1_{k^*}} \|f\|_{H^\theta_{k^*}}
 \leq C \|T_{t-r}\|_{L^{k^*} \rightarrow W^1_{k^*}}^{1-\theta} \|T_{t-r}\|_{W^1_{k^*}\rightarrow
  W^1_{k^*}}^\theta \|f\|_{H^\theta_{k^*}} \\\label{interpolated}
   &\leq C (t-r)^{-1 + \frac{\theta}{2}} \|f\|_{H^\theta_{k^*}}.
\end{align}
Let us now fix $\theta < 1 \wedge 2(1-\frac{d}{k})$.
Hence, returning to \eqref{ut-W1k}
for which we use~\eqref{hatP-bound-initial} and~\eqref{interpolated} with $f = u_r K\ast u_r$, we obtain
\begin{align}\label{eq:mild-reg}
    \|u_{t}\|_{W^1_{k^*}} &\leq C t^{-\frac{1}{2}-\frac{d}{2k}}
    + \int_0^t (t-r)^{-1 + \frac{\theta}{2}} \|u_r K\ast u_r\|_{H^\theta_{k^*}} \dd r .
\end{align}
The Kato-Ponce inequality (see \cite[Theorem 1, eq. (3)]{GO14}) 
reads
\begin{align*}
    \|u_r K\ast u_r\|_{H^\theta_{k^*}} &\leq C(\|u_r\|_{H^\theta_{k^*}}
 \|K\ast u_r\|_{L^\infty} + \|u_r\|_{L^{k^*}} \|K\ast u_r\|_{H^\theta_\infty}).
\end{align*}
By the definition of the Bessel operator~\eqref{eq:defBessel}, $(I-\Delta)^\frac{\theta}{2}(K\ast u_{r}) = K\ast (I-\Delta)^\frac{\theta}{2} u_{r}$, hence the previous inequality combined with Hölder's inequality yields
\begin{align*}
    \|u_r K\ast u_r\|_{H^\theta_{k^*}} 
 & \leq C(\|u_r\|_{H^\theta_{k^*}} \|K\|_{L^k} \|u_r\|_{L^{k^*}} + \|u_r\|_{L^{k^*}} \|K\|_{L^k}
  \|u_r\|_{H^\theta_{k^*}}).
\end{align*}
Hence, using the above estimate, together with Proposition~\ref{prop:lip-bound1} for $q=k^*$ and $\beta \coloneqq \theta + \varepsilon < 1 \wedge 2(1-\frac{d}{k})$, and~\eqref{ut-time-reg} for $q=k^*$, we obtain
\begin{equation}
\label{eq:use-Prop3.4}
\begin{split}
    \|u_{t}\|_{W^1_{k^*}} &\leq C t^{-\frac{1}{2}-\frac{d}{2k}}
    + C \int_0^t (t-r)^{-1 + \frac{\theta}{2}} \|u_r\|_{H^\theta_{k^*}} \|u_r\|_{L^{k^*}} \dd r\\
    & \leq C t^{-\frac{1}{2}-\frac{d}{2k}}
    + C \int_0^t (t-r)^{-1 + \frac{\theta}{2}} r^{-\frac{1}{2}\beta-\frac{d}{2 k}} r^{-\frac{d}{2k}}\, \dd r\\
    & \leq C (t^{-\frac{1}{2}-\frac{d}{2k}}+t^{-\frac{d}{k}-\frac{\varepsilon}{2}}) .
\end{split}
\end{equation}
Finally, taking $0<\varepsilon <1-\frac{d}{k}$, we obtain that there exists $C>0$ such that for any $t \in (0,1]$,
\begin{equation*}
\|u_{t}\|_{W^1_{k^*}} \leq C t^{-\frac{1}{2}-\frac{d}{2k}}.
\end{equation*}

\textbf{Case $k = \infty$:} In this case, going back to \eqref{interpolated}, where instead of the complex interpolation theorem, we use the real interpolation theorem (see \cite[Section 2.5.1] {RunstSickel}), we obtain that
for any $\theta \in (0,1)$ and any $f \in W^\theta_{1}$,
\begin{align*}
    \left\| T_{t-r} f
    \right\|_{W^1_{1}}
   &\leq C (t-r)^{-1 + \frac{\theta}{2}} \|f\|_{W^\theta_{1}}.
\end{align*}
Now \eqref{eq:mild-reg} writes as 
\begin{align}\label{eq:mild-reg2}
    \|u_{t}\|_{W^1_{1}} &\leq C t^{-\frac{1}{2}}
     +C \int_0^t (t-r)^{-1 + \frac{\theta}{2}} \|u_r K\ast u_r\|_{W^\theta_{1}}\, \dd r .
\end{align}
Using $W^\theta_1=B^\theta_{1,1}$, the fractional Leibniz estimate in \cite[Lemma 2.2]{CS22} with $r=p=1$, $q=\infty$ 
and $\alpha = \theta$ with the notations therein,
combined with Young's convolution inequality, gives
\begin{align*}
    \|u_r(K\ast u_r)\|_{W^\theta_1}&
    \leq \|u_r\|_{W^\theta_1}\|K\ast u_r\|_{L^\infty}
    + \|u_r\|_{L^1}\|K\ast u_r\|_{B^\theta_{\infty,1}}\\
    &
    \leq C\|K\|_{L^\infty} \|u_r\|_{L^1} \|u_r\|_{W^\theta_1}.
\end{align*}
Fix $\varepsilon \in (0,1-\theta)$. Replacing the above inequality in \eqref{eq:mild-reg2} and using \eqref{eq:ut-Wtheta1} from Lemma~\ref{prop:lip-bound1} (with $\beta = \theta + \varepsilon$),
we obtain
\begin{align*}
    \|u_{t}\|_{W^1_{1}} &\leq C t^{-\frac{1}{2}}
    + C\|K\|_{L^\infty} \int_0^t (t-r)^{-1 + \frac{\theta}{2}} r^{-\frac{\theta+\varepsilon}{2}}\, \dd r\\
    &\leq C t^{-\frac{1}{2}} + C\|K\|_{L^\infty} t^{-\frac{\varepsilon}{2}} \leq C t^{-\frac{1}{2}},
\end{align*}
where we used $\varepsilon <1$ in the last inequality.
\end{proof}

\begin{remark}
\label{rem:track-K-2}
Again keeping track of the dependence of $C$ in $ \lVert K\rVert_{L^k}$, we see from Equation~\eqref{eq:use-Prop3.4}, where Proposition~\ref{prop:lip-bound1} is used, and from Remark~\ref{rem:track-K}, that in Proposition~\ref{prop:lip-bound}, $C$ is now of the form $C_0(1+\|K\|_{L^k}^2)$, for some $C_0$ which is independent of $K$.
\end{remark}

\section{Bound on the error between microscopic and macroscopic
 mild formulations}\label{sec:micro-macro}
Define the  empirical measure by 
\begin{equation*}
\mu_t^N \coloneqq \frac{1}{N} \sum_{i=1}^N \delta_{X_t^{i,N}},
\end{equation*}
and the mollified empirical measure $u_t^N$ by
\begin{equation}
\label{eq:initial-empirical-measure}
u_t^N(\cdot) \coloneqq V^N \ast \mu_t^N(\cdot) =
\frac{1}{N} \sum_{i=1}^N V^N(\cdot - X_t^{i,N}).
\end{equation}
The following proposition 
gives an a.s. error estimate for the difference between the
 mollified empirical measure $u_t^N$ and the 
 solution $u_t$ of the nonlinear Fokker-Planck equation 
 \eqref{nlfp-intro}. This a.s. bound will be used to derive an $L^2(\PP)$ bound
 for $\|u_t^N - u_t\|_{k^*}$ in the proof of Theorem \ref{thm:main}.
\begin{proposition}
\label{prop:mild-formulation}
Under Assumption~\ref{assum:potentials}, there exists $C > 0$ such that for any $t \in (0,1]$,
\begin{align*}
\|u_t^N - u_t\|_{k^*}
    &\leq C (t^{-\frac{1}{2}- \frac{d}{2k}} +t^{-\frac{d}{k}})
    \left( \mathbb{W}_1(u_0^N,u_0) + N^{-\alpha} \right)
    + \|M_t^N\|_{k^*} + C \int_0^t (t-s)^{-\frac{1}{2}-\frac{d}{2k}} \|M_s^N\|_{k^*} \dd s\quad a.s.,
\end{align*}
where 
$M_t^N(x) \coloneqq  
    \frac{\sqrt{2}}{N}\sum_{i=1}^N \int_0^t 
    \big( \hat{P}^*_{t-s} \,\nabla V^N(\cdot- X_s^{i,N})(x) \,\big)\cdot \dd W_s^i,$\,  for a.e. $x \in \R^d$.
\end{proposition}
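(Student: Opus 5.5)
We write the mild (Duhamel) formulation of $u^N_t$ with respect to the Ornstein--Uhlenbeck semigroup $\hat P$, compare it with the Duhamel formula \eqref{eq:duhamel-u} for $u_t$ (with $v=u$, written as $u_t=\hat P^*_tu_0-\int_0^t\nabla\cdot\hat P^*_{t-s}(u_s\,K\ast u_s)\,\dd s$ in distributional form), and then close with a singular Grönwall argument on $[0,1]$.

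\textbf{Step 1: mild formulation of $u^N$.} For fixed $x$, we apply Itô's formula to $s\mapsto \hat P_{t-s}\big(V^N(x-\cdot)\big)(X^{i,N}_s)$ on $[0,t]$. The generator of \eqref{eq:ou-sde} cancels the time derivative of $\hat P_{t-s}$. What remains is the interaction drift and the martingale part. Averaging over $i$ gives
\[
u^N_t=\hat P^*_t u^N_0-\int_0^t \nabla\cdot\hat P^*_{t-s}\big(V^N\ast(\mu^N_s\,K\ast u^N_s)\big)\,\dd s + M^N_t ,
\]
where we use that $K\ast V^N\ast\mu^N_s=K\ast u^N_s$, and where $M^N_t$ is exactly the stochastic term in the statement (up to sign conventions for $\nabla V^N$). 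This identity should be checked in the weak sense against $f\in L^k$, using Fubini and Lemma~\ref{lem:OU-bounds}.

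\textbf{Step 2: splitting the difference.} Subtracting the two formulas gives three terms. The first is the initial term $\hat P^*_t(u^N_0-u_0)$. We write it as $\hat P^*_t(V^N\ast\mu^N_0-\mu^N_0)+\hat P^*_t(\mu^N_0-u_0)$ and apply Lemma~\ref{lem:OU-bounds}\ref{item:Phat-Wasserstein}. Since $\mathbb W_1(V^N\ast\mu^N_0,\mu^N_0)\le CN^{-\alpha}$ (the support of $V$ is compact), this yields $Ct^{-\frac12-\frac d{2k}}\big(\mathbb W_1(\mu^N_0,u_0)+N^{-\alpha}\right)$. Here $\mathbb W_1(u^N_0,u_0)$ in the statement is read as the Wasserstein distance between the empirical and limit initial data, and the two readings differ by at most $N^{-\alpha}$.

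The second term is the drift difference $\int_0^t\nabla\cdot\hat P^*_{t-s}\big[V^N\ast(\mu^N_sK\ast u^N_s)-u_sK\ast u_s\big]\dd s$. We test against $f\in L^k$ and use $\|\nabla\hat P_{t-s}f\|_{L^\infty}\le C(t-s)^{-\frac12-\frac d{2k}}\|f\|_{L^k}$, which follows from Lemma~\ref{lem:OU-bounds}\ref{item:OU-Lp} with $p_1=k$, $p_2=\infty$. We then split the bracket as
\[
\big(V^N\ast(\mu^N_s g_s)-\mu^N_s g_s\big)+\mu^N_s\big(K\ast(u^N_s-u_s)\big)+\big(\mu^N_s-u_s\big)K\ast u_s ,\qquad g_s\coloneqq K\ast u^N_s .
\]
The middle piece has total variation at most $\|K\|_{L^k}\|u^N_s-u_s\|_{k^*}$, which is bounded and gives the Grönwall term.

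\textbf{Step 3: the main obstacle.} The first and third pieces are where the difficulty lies. The empirical measure $\mu^N_s$ is not in $L^{k^*}$, so the third piece cannot be bounded directly in total variation.

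For the third piece we replace $\mu^N_s$ by $u^N_s$ up to a commutator. The pairing $\langle\mu^N_s-u^N_s,\ (K\ast u_s)\,\nabla\hat P_{t-s}f\rangle$ is controlled by $N^{-\alpha}$ times the Lipschitz norm of $(K\ast u_s)\nabla\hat P_{t-s}f$. This Lipschitz norm is bounded using Proposition~\ref{prop:lip-bound}, which gives the factor $s^{-\frac12-\frac d{2k}}$, together with a second derivative of $\hat P_{t-s}f$. The second derivative produces a $(t-s)^{-1-\frac d{2k}}$ singularity, which is not integrable. To avoid it, we first move one derivative onto the density via the semigroup splitting used in \eqref{reg2Lk}, or interpolate as in \eqref{interpolated}. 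This is where I expect the $t^{-\frac dk}$ factor to come from, through $\int_0^t (t-s)^{-\frac12-\frac d{2k}}s^{-\frac d{2k}}\dots$. The remaining part $\langle u^N_s-u_s,\ (K\ast u_s)\nabla\hat P_{t-s}f\rangle$ is bounded by $\|K\|_{L^k}s^{-\frac d{2k}}\|u^N_s-u_s\|_{k^*}$ times the semigroup factor.

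For the first piece, the mollification commutator is again of order $N^{-\alpha}$ times a Lipschitz norm of $\nabla\hat P_{t-s}f$ against $\mu^N_s K\ast u^N_s$. Here we would like to use $|K\ast u^N_s|\le\|K\|_{L^k}\|u^N_s\|_{k^*}$ and bound $\|u^N_s\|_{k^*}\le \|u_s\|_{k^*}+\|u^N_s-u_s\|_{k^*}$. This produces a nonlinear term; I would try to absorb it into the Grönwall argument by working with $\|u^N_s-u_s\|_{k^*}$ on a good event, or by moving the derivative differently as above.

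\textbf{Step 4: closing.} Collecting the pieces gives
\[
\|u^N_t-u_t\|_{k^*}\le A(t)+\|M^N_t\|_{k^*}+C\int_0^t (t-s)^{-\frac12-\frac d{2k}}(1+s^{-\frac d{2k}})\|u^N_s-u_s\|_{k^*}\dd s .
\]
A singular (Volterra-type) Grönwall lemma on $(0,1]$ then yields the stated bound, with the kernel $(t-s)^{-\frac12-\frac d{2k}}$ acting on $\|M^N_s\|_{k^*}$.
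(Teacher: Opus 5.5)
\textbf{There is a genuine gap in Step~3, caused by your choice of pivot in Step~2.}

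Your Step~1 drift is essentially the paper's: $V^N\ast(\mu^N_s\,K\ast u^N_s)$ is the field $h_s(x)=\langle\mu^N_s,V^N(x-\cdot)\,K\ast u^N_s(\cdot)\rangle$. The only time-integrable semigroup bound available is $\|\nabla\hat P_{t-s}f\|_{L^\infty}\le C(t-s)^{-\frac12-\frac d{2k}}\|f\|_{L^k}$, so the difference of drifts must be estimated in $L^1$.

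Your first and third pieces insert the raw empirical measure $\mu^N_s$. This makes the mollification commutator $\mu^N_s-V^N\ast\mu^N_s$ act on the test function $\nabla\hat P_{t-s}f$, weighted by $K\ast u^N_s$ or $K\ast u_s$. Extracting $N^{-\alpha}$ then needs $\|\nabla^2\hat P_{t-s}f\|_{L^\infty}$, which is of order $(t-s)^{-1-\frac d{2k}}\|f\|_{L^k}$ and is not integrable at $s=t$.

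None of your remedies repairs this:
\begin{itemize}
\item \eqref{reg2Lk} and \eqref{interpolated} need the transported object to be a function in $L^{k^*}$, resp.\ $H^\theta_{k^*}$. Here $\mu^N_s\,K\ast u^N_s$ is a weighted sum of Dirac masses, and \eqref{reg2Lk} still carries the factor $(t-r)^{-1}$.
\item Using a $C^\beta$ seminorm of $\nabla\hat P_{t-s}f$ only yields $N^{-\alpha\beta}$ with $\beta<1-\frac dk$, strictly worse than the claimed $N^{-\alpha}$.
\item Restricting to a ``good event'' to handle the nonlinear factor $\|u^N_s\|_{k^*}$ is incompatible with an almost sure bound with a deterministic constant.
\end{itemize}

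\textbf{The missing idea} is to keep the mollifier attached to the particles and regroup your three pieces as
\[
h_s-u_s\,K\ast u_s
= V^N\ast\big(\mu^N_s\,K\ast(u^N_s-u_s)\big)
+\int_{\R^d}V^N(\cdot-y)\big(K\ast u_s(y)-K\ast u_s(\cdot)\big)\,\mu^N_s(\dd y)
+(u^N_s-u_s)\,K\ast u_s .
\]
The first and last terms have $L^1$ norm at most $\|K\|_{L^k}\|u^N_s-u_s\|_{k^*}$; for the last one, use $\|K\ast u_s\|_{L^k}\le\|K\|_{L^k}$. The middle term has $L^1$ norm at most $\mathrm{Lip}(K\ast u_s)\int V^N(z)|z|\,\dd z\le Cs^{-\frac12-\frac d{2k}}N^{-\alpha}$ by Proposition~\ref{prop:lip-bound}.

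The commutator now falls on the drift $K\ast u_s$, whose Lipschitz constant is integrable in time, rather than on $\nabla\hat P_{t-s}f$. No nonlinear term appears. Since $\int_0^t(t-s)^{-\frac12-\frac d{2k}}s^{-\frac12-\frac d{2k}}\,\dd s\le Ct^{-\frac dk}$, the Volterra--Gr\"onwall lemma then gives the statement.

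\textbf{A second, easily fixed, inaccuracy is in Step~1.} Testing $u^N_s$ against $\hat P_{t-s}f$ is what produces $\hat P^*_tu^N_0$ and the $M^N_t$ of the statement. In that case the generator of \eqref{eq:ou-sde} acts at the mollifier point $x$, while the particle drift is $F(X^{i,N}_s)$. Itô's formula therefore leaves the extra term
\[
\int_0^t\hat P^*_{t-s}\nabla\cdot\langle\mu^N_s,V^N(x-\cdot)(F(\cdot)-F(x))\rangle\,\dd s ,
\]
which is of size $CN^{-\alpha}$ by the Lipschitz continuity of $F$.

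The exact cancellation you describe holds only for $s\mapsto\hat P_{t-s}(V^N(x-\cdot))(X^{i,N}_s)$. That choice, however, produces $V^N\ast\hat P^*_t\mu^N_0$ and a different martingale from $M^N_t$. The reason is that $\hat P^*_t$ does not commute with convolution by $V^N$ in general; already for linear $F$ it involves a dilation.
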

\begin{proof}
Let $f$ in $L^{k}(\R^d)$. Since $u_t = P^{u,*}_{0,t} u_0$ and
using \eqref{eq:duhamel}, we have the following mild formulation for $u_t$: 
\begin{equation}\label{eq:ut-mild}
    u_t(f) = \hat{P}_t^*u_0(f) + \int_0^t 
    \hat{P}_{t-s}^* \nabla \left(u_s (K * u_s)\right)(f)\, \dd s.
\end{equation}
In order to derive a similar formulation for $u_t^N$, we apply Itô's formula to
\begin{equation*}
s \mapsto g(s,X_s^N) \coloneqq \hat{P}_{t-s}^*u_s^N(f) 
=\frac1N\sum_{i=1}^N
\int_{\R^d}
\widehat P_{t-s}f(x)
V^N(x-X_s^{i,N})\,\dd x.
\end{equation*}
Although $f$ is only assumed to belong to $L^k(\R^d)$,
the convolution with $V^N$ provides enough regularity
to apply It\^o's formula to $g(s,X_s^N)$. Recall that 
$\hat{P}$ satisfies
\(-\partial_s\hat{P}_{t-s}f=(\Delta-F\cdot\nabla)\hat{P}_{t-s}f.\)
Hence, in It\^o's formula, the term
$-\Delta\hat P_{t-s}f$ arising from the time derivative
cancels with the Laplacian term in the generator, while the term   
$F\cdot\nabla\hat P_{t-s}f$ combines with the drift term
$-F(X_s^{i,N})\cdot\nabla\hat P_{t-s}f$, leading to
$F(X_s^{i,N})-F(x)$. Rewriting this in terms of
$\mu_s^N$ and integrating by parts gives a contribution
involving $F(\cdot)-F(x)$. Namely, we obtain the following
mild formulation for $u_t^N$:
\begin{align}\nonumber
u_t^N(f) &= \hat{P}_t^*u_0^N(f)
+ \int_{\R^d}\int_0^t  f(x) \hat{P}_{t-s}^*\nabla \cdot
 \left\langle \mu_s^N, V^N(x-\cdot)\, K\ast 
u_s^N(\cdot) \right\rangle \,\dd s\, \dd x  \\\nonumber
&\quad
+ \int_{\R^d}\int_0^t f(x)  \hat{P}_{t-s}^*\nabla \cdot
\langle \mu_s^N, V^N(x-\cdot)\,\left(F(\cdot)-
F(x) \right) \rangle \,\dd s \, \dd x \\\label{eq:utn-mild}
&\quad
- \frac{\sqrt{2}}{N}\sum_{i=1}^N \int_0^t 
\left(\int_{\mathbb{R}^d} \hat{P}_{t-s} f(x)\, \nabla
V^N(x - X_s^{i,N}) \, \dd x \right)\cdot \dd W_s^i.
\end{align}
Subtracting \eqref{eq:utn-mild} from \eqref{eq:ut-mild} gives 
\begin{equation*}
\|u_t^N - u_t\|_{k^*} 
 \leq \|\hat{P}_t^*(u_0^N - u_0)\|_{k^*} + \|E_t^{1,N}\|_{k^*} + 
 \|E_t^{2,N}\|_{k^*} + \|M_t^N\|_{k^*},
\end{equation*}
where, for a.e. $x \in \mathbb{R}^d$,
\begin{equation*}
    E_t^{1,N}(x) \coloneqq 
    \int_0^t  \hat{P}_{t-s}^* \nabla \cdot
    \langle \mu_s^N, V^N(x-\cdot)\,\left(F(\cdot)-F(x) 
    \right) \rangle \,\dd s
\end{equation*}
and
\begin{equation*}
E_t^{2,N}(x) \coloneqq 
    \int_0^t \hat{P}_{t-s}^* 
    \nabla \cdot \left(
    \left\langle \mu_s^N, V^N(x-\cdot)\, K\ast u_s^N(\cdot) \right\rangle
   - u_s (K * u_s)(x) \right)\dd s.
\end{equation*}

\textbf{Step 1: Bound for $\|\hat P_t^*(u_0^N - u_0)\|_{k^*}$}

Using Lemma~\ref{lem:OU-bounds}\ref{item:Phat-Wasserstein}, there exists a constant $C >0$ such that for any $t \in (0,1]$,
\begin{equation*}
\|\hat{P}_t^*(u_0^N - u_0)\|_{k^*} \leq C t^{-\frac{1}{2}- \frac{d}{2k}}
\mathbb{W}_1(u_0^N, u_0).
\end{equation*}

\textbf{Step 2: Bound for $\|E_t^{1,N}\|_{k^*}$}

We first record the following immediate consequence of duality that will be used in Step~2 and Step~3: for any
$h\in L^1(\mathbb{R}^d;\mathbb{R}^d)$,
\begin{align}
\label{eq:beginning-Step2} 
\bigl\|\hat{P}_{t-s}^*
         \nabla\cdot h\bigr\|_{k^*} &= \sup_{\|f\|_k \leq 1} \left| 
            \int_{\mathbb{R}^d} h(x)\cdot \nabla \hat{P}_{t-s}f(x)\,\dd x \right| 
            \leq \|\nabla \hat{P}_{t-s}\|_{L^k \to L^\infty} \|h\|_{L^1}.
 \end{align}
To estimate $\|E_t^{1,N}\|_{k^*}$, perform the chain of inequalities below, using first~\eqref{eq:beginning-Step2}, then Lemma~\ref{lem:OU-bounds}\ref{item:OU-Lp} in the second inequality, then the Lipschitz continuity of $F$ in the third inequality,
 and finally that $\int V^N(z) |z| \dd z \lesssim N^{-\alpha}$ in the last inequality, to deduce that there exists $C >0$ such that for any $t \in (0,1]$,
\begin{align*}
\|E_t^{1,N}\|_{k^*} &\leq 
    \int_0^t \| \nabla \hat{P}_{t-s}\|_{L^k \to L^\infty} \int_{\mathbb{R}^d} \int_{\mathbb{R}^d}
      \mu_s^N(\dd y) V^N(x-y)\,
     \left|F(x)-F(y) \right| \dd x \, \dd s
     \\
     &\leq C \int_0^t (t-s)^{-\frac{1}{2}-\frac{d}{2k}}
     \int_{\mathbb{R}^d} \int_{\mathbb{R}^d}
      \mu_s^N(\dd y) V^N(x-y)\,
     \left|F(x)-F(y) \right| \dd x \, \dd s\\
     & \leq C \int_0^t (t-s)^{-\frac{1}{2}-\frac{d}{2k}} \int_{\mathbb{R}^d}  \mu_s^N(\dd y) 
        \int_{\mathbb{R}^d} V^N(x-y)\, |x-y| \, \dd x \, \dd s
      \\
        &\leq C N^{-\alpha }.
\end{align*}

\textbf{Step 3: Bound for $\|E_t^{2,N}\|_{k^*}$}

We decompose, for a.e. $x \in \mathbb{R}^d$, $E_t^{2,N}(x)$ in three parts as follows:
\begin{align*}
E_t^{2,N}(x) &=
    \int_0^t  \hat{P}_{t-s}^* \nabla \cdot 
    \left\langle \mu_s^N, V^N(x-\cdot)\, (K\ast u_s^N(\cdot) - 
    K \ast u_s(\cdot))\right\rangle \dd s
  \\
    &\quad +\int_0^t 
    \hat{P}_{t-s}^*  \nabla\cdot
    \left\langle \mu_s^N, V^N(x-\cdot)\, (K\ast u_s(\cdot) - 
    K \ast u_s(x)) \right\rangle \dd s
  \\
    & \quad + 
    \int_0^t \hat{P}_{t-s}^* \nabla \cdot \left( u_s^N(x) - u_s(x) \right)    
    (K \ast u_s(x)) \, \dd s 
    \eqqcolon F_t^{1,N}(x) + F_t^{2,N}(x) +
     F_t^{3,N}(x).
\end{align*}
For $F^{1,N}$, using~\eqref{eq:beginning-Step2}, and then using Lemma~\ref{lem:OU-bounds}\ref{item:OU-Lp}, we deduce that there exists $C >0$ such that for any $t \in (0,1]$, 
\begin{align*}
    \| F_t^{1,N}\|_{k^*} &\leq
    \int_0^t \|\nabla \hat{P}_{t-s} \|_{L^k \to L^\infty} 
    \int_{\mathbb{R}^d}  \int
     \mu_s^N(\dd y) V^N(x-y)\, \left| K\ast u_s^N(y) - 
    K \ast u_s(y) \right| \dd x \, \dd s\\
    &\leq C \|K\|_{L^k} \int_0^t (t-s)^{-\frac{1}{2}-
    \frac{d}{2k}} \|u_s^N - u_s\|_{k^*}\, \dd s. 
\end{align*}
For $F^{2,N}$, using Lemma~\ref{lem:OU-bounds}\ref{item:OU-Lp}, 
 Proposition~\ref{prop:lip-bound} and
 that $\int V^N(z) |z| \dd z \lesssim N^{-\alpha}$, we deduce 
 that there exists $C >0$ 
such that for any $t \in (0,1]$,
\begin{align*}
   \| F_t^{2,N}\|_{k^*}
    &\leq 
    \int_0^t \| \nabla \hat{P}_{t-s}\|_{L^k \to L^\infty} \int_{\mathbb{R}^d} \int_{\mathbb{R}^d}
      \mu_s^N(\dd y) V^N(x-y)
     \left|K \ast u_s (x)-K \ast u_s(y) \right| \dd x \, \dd s\\
     &\leq C \int_0^t
      (t-s)^{-\frac{1}{2}-\frac{d}{2k}} \, s^{
       -\frac{1}{2}-\frac{d}{2k}}
     \int_{\mathbb{R}^d} \int_{\mathbb{R}^d}
      \mu_s^N(\dd y) V^N(x-y)
     \left|x-y \right| \dd x \, \dd s\\
     &\leq
     C t^{-\frac{1}{2}-\frac{d}{2k}} N^{-\alpha}.
\end{align*}
Finally, for $F^{3,N}$, using Lemma~\ref{lem:OU-bounds}\ref{item:OU-Lp}, we derive a bound similar to the first term, 
\begin{align*}
    \| F_t^{3,N}\|_{k^*}
    &\leq \int_0^t \| \nabla \hat{P}_{t-s} \|_{L^k \to L^\infty} 
    \|K \ast u_s\|_{L^k} \|u_s^N - u_s\|_{k^*}\, \dd s\\
    & \leq C  \int_0^t (t-s)^{-\frac{1}{2}-\frac{d}{2k}} \|K\|_{L^k} 
    \|u_s\|_{L^1} \|u_s^N - u_s\|_{k^*}\, \dd s\\
    & \leq C \int_0^t (t-s)^{-\frac{1}{2}-\frac{d}{2k}} 
    \|u_s^N - u_s\|_{k^*}\, \dd s.
\end{align*}
Combining the previous bounds, we obtain the following estimate:
\begin{equation*}
\|E_t^{2,N}\|_{k^*} \leq C \int_0^t (t-s)^{-\frac{1}{2}-\frac{d}{2k}} 
\|u_s^N - u_s\|_{k^*}\, \dd s+ C t^{-\frac{1}{2}-\frac{d}{2k}} N^{-\alpha}.
\end{equation*}

\textbf{Step 4: Gr\"onwall's Lemma.}

Combining the above estimates, we obtain that for any $t \in (0,1]$, almost surely,
\begin{align*}
\|u_t^N - u_t\|_{k^*} &\leq C t^{-\frac{1}{2}- \frac{d}{2k}}
\mathbb{W}_1(u_0^N,u_0) + C t^{-\frac{1}{2}-\frac{d}{2k}} N^{-\alpha} + \|M_t^N\|_{k^*} + C \int_0^t 
(t-s)^{-\frac{1}{2}-\frac{d}{2k}} 
\|u_s^N - u_s\|_{k^*}\, \dd s.
\end{align*}
Using a Gr\"onwall's inequality of Volterra type (see \cite[Lemma 7.1.1]{Henry}), for any $t \in (0,1]$, almost surely,
\begin{align}
    \nonumber
\|u_t^N - u_t\|_{k^*}&\leq C t^{-\frac{1}{2}- \frac{d}{2k}}
    \mathbb{W}_1(u_0^N,u_0) + C t^{-\frac{1}{2}-\frac{d}{2k}} N^{-\alpha} + \|M_t^N\|_{k^*}  \\ \nonumber
    &\quad + C \int_0^t (t-s)^{-\frac{1}{2}-\frac{d}{2k}}
    \Big(s^{-\frac{1}{2}-\frac{d}{2k}}
    \mathbb{W}_1(u_0^N,u_0) + s^{-\frac{1}{2}-\frac{d}{2k}}N^{-\alpha} +
     \|M_s^N\|_{k^*}\Big)\, \dd s\\ \label{Gronwall}
    &\leq C (t^{-\frac{1}{2}- \frac{d}{2k}} +
    t^{-\frac{d}{k}}) \left( \mathbb{W}_1(u_0^N,u_0) + 
   N^{-\alpha}\right) + \|M_t^N\|_{k^*} + C \int_0^t (t-
    s)^{-\frac{1}{2}-\frac{d}{2k}} \|M_s^N\|_{k^*}\, \dd s.         
\end{align}
\end{proof}

\begin{remark}
\label{rem:track-K-2}
    As mentioned in Remark~\ref{rem:track-K}, the dependence 
    on $\|K\|_{L^k}$ 
    of the constant $C$ is important.
    In \eqref{Gronwall}, 
    the Grönwall argument
    leads to $C$ of the form 
$\exp(C_0 \|K\|_{L^k})$,
    where $C_0$ is independent of $\|K\|_{L^k}$.
\end{remark}

\section{Control of the moments of particles and of the stochastic integral}
\label{sec:int-sto}
We begin with a lemma which is a key technical result in the proof 
of the uniform-in-time bounds of the moments of the particle system, given
in Lemma~\ref{lem:uniform_bounds} and is a direct consequence of the 
 Burkholder-Davis-Gundy inequality for stochastic 
 integrals with values in Banach spaces of martingale type $2$.
Note that, in Lemma~\ref{lem:uniform_convolution}, we only need to consider the case $k\in(d,+\infty)$, since 
the case $k = \infty$ is easier and is treated separately in the 
proof of Lemma~\ref{lem:uniform_bounds}.

\begin{lemma}
\label{lem:uniform_convolution} 
Define, for any $s \in (0,1]$,
\begin{equation*}
S_s^N \coloneqq  N^{-1} \int_0^s \sum_{i=1}^N  K \ast \hat{P}^*_{s-r} 
\nabla V^N(\cdot - X_r^{i,N}) \cdot \dd W_r^i.
\end{equation*} 
Suppose that $k\in(d,+\infty)$ and that Assumption~\ref{assum:potentials} hold and that Assumption~\ref{assum:alpha} holds. 
Then for any $q\ge2$, there exists a constant $C>0$ such 
that
\begin{equation*}
\sup_{s\in (0,1]} \sup_{N \in \N}
 \left\|\|S_s^N\|_{L^\infty}\right\|_{L^q(\Omega)} \leq C,
\end{equation*}
where $C$ does not depend on the initial condition of the particle system.
\end{lemma}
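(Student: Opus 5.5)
The plan is to view $S^N_s$ as a stochastic integral with values in a Banach space of martingale type $2$ that embeds continuously into $L^\infty(\R^d)$. I would then apply the Banach-valued Burkholder--Davis--Gundy inequality and bound the integrand deterministically, with a bound independent of $N$, of the positions $X^{i,N}_r$, and hence of the initial condition.

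\textbf{Choice of space.} Fix $p\in(2,\infty)$ large and $\delta\in(0,1)$ with $\delta p>d$. The Bessel space $H^\delta_p(\R^d)$ is isometric to $L^p(\R^d)$ through $(I-\Delta)^{\delta/2}$, so it has martingale type $2$. Moreover $H^\delta_p\hookrightarrow L^\infty$ because $\delta p>d$. Write $\Phi^{i}_{s,r}\coloneqq K\ast\hat P^*_{s-r}\nabla V^N(\cdot-X^{i,N}_r)$, an $\R^d$-valued element of $H^\delta_p$. The $N$ Brownian motions are independent, so BDG gives
\[
\E\|S^N_s\|_{L^\infty}^q\le C\,\E\|S^N_s\|_{H^\delta_p}^q\le C\,\E\Big(\int_0^s N^{-2}\sum_{i=1}^N\|\Phi^i_{s,r}\|_{H^\delta_p}^2\,\dd r\Big)^{q/2}.
\]
It therefore suffices to show a deterministic bound $\|\Phi^i_{s,r}\|_{H^\delta_p}\le C\|K\|_{L^k}(s-r)^{-\gamma}$ with $\gamma<\tfrac12$, uniformly in $y=X^{i,N}_r$ and in $N$. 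The right-hand side is then at most $C(N^{-1}\int_0^1 r^{-2\gamma}\dd r)^{q/2}\le C$, uniformly in $s\in(0,1]$ and $N$. The factor $N^{-1}$ is not even needed.

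\textbf{Deterministic bound.} Bessel operators commute with convolution, so Young's inequality with $\frac1k+\frac1m=1+\frac1p$ gives
\[
\|\Phi^i_{s,r}\|_{H^\delta_p}\le\|K\|_{L^k}\,\big\|(I-\Delta)^{\delta/2}\hat P^*_{s-r}\nabla V^N(\cdot-y)\big\|_{L^m}.
\]
Next I move the derivative onto the kernel:
\[
\hat P^*_{\tau}\nabla V^N(\cdot-y)(z)=-\int V^N(w-y)\,\nabla_w\hat p_\tau(w,z)\,\dd w.
\]
By Lemma~\ref{lem:OU-bounds}\ref{item:heat-bounds},\ref{item:sgG-Lp}, and since $\|V^N\|_{L^1}=1$, this gives an $L^1\to L^m$ bound of order $\tau^{-\frac12-\frac d2(1-\frac1m)}$. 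The same argument after splitting $\hat p_\tau$ at $\tau/2$, exactly as in the proof of Proposition~\ref{prop:lip-bound}, gives a $W^1_m$ bound of order $\tau^{-1-\frac d2(1-\frac1m)}$. Complex interpolation of these two bounds yields
\[
\|(I-\Delta)^{\delta/2}\hat P^*_\tau\nabla V^N(\cdot-y)\|_{L^m}\le C\tau^{-\frac12-\frac\delta2-\frac d2(\frac1k-\frac1p)},
\]
using $1-\frac1m=\frac1k-\frac1p$.

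\textbf{Parameter choice.} We need $\gamma=\frac12\big(1+\delta+\frac dk-\frac dp\big)$, so $\gamma<\frac12$ requires $\delta+\frac dk-\frac dp<0$. This is incompatible with $\delta>\frac dp$, so the naive choice must be fixed. The fix is to use the time integrability more carefully: BDG only needs $\int_0^1\tau^{-2\gamma}\dd\tau<\infty$, that is $\gamma<\frac12$ in the square. The correct requirement is therefore $2\gamma<1$, i.e. $1+\delta+\frac dk-\frac dp<1$, which still fails.

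So instead I will place all the derivative on $K$'s side only partially. Take $m$ with $\frac1m$ close to $1$, i.e. $\frac1p=\frac1k-\eta$ for small $\eta>0$. Then $1-\frac1m=\eta$ and the exponent becomes $\frac12+\frac\delta2+\frac{d\eta}2$. For square integrability one needs $\delta+d\eta<0$, which is again impossible. The honest count is this: the gradient costs $(s-r)^{-1/2}$, which is exactly borderline, and the $L^\infty$ embedding costs a little more.

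The remedy I would try first is to avoid placing the gradient on the semigroup and to use the moment of $V^N$ instead. Write $\nabla V^N=\nabla\cdot$ of something, or use $K\in L^k$ with $k>d$ to gain regularity. Concretely, bound $\|K\ast\nabla G_\tau\|_{L^\infty}$ via $\|K\|_{L^k}\|\nabla G_\tau\|_{L^{k^*}}\sim\tau^{-\frac12-\frac d{2k}}$. Then $2\gamma=1+\frac dk<2$, which does \emph{not} require $\gamma<\frac12$ if one uses the BDG bound in the form $\E\sup$ only at a fixed $s$ with $\int_0^s\|\Phi\|^2\dd r$: this needs $2\gamma<1$. It fails, so instead I would exploit the factor $N^{-1}$ together with $\|\nabla V^N\|$-type bounds at small $s-r$, splitting $\int_0^s=\int_{s-N^{-a}}^s+\int_0^{s-N^{-a}}$. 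On the near part the bound $\|K\ast\hat P^*\nabla V^N\|_{H^\delta_p}\lesssim N^{\alpha(1+\delta+d/k)}$ times $N^{-1}$ from the quadratic variation is small, thanks to $\alpha<1/(d\vee2)$. On the far part the singularity is integrable up to a logarithm compensated by $N^{-1}$.

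\textbf{Main obstacle.} Balancing this split is the step I expect to be the main obstacle. I would first try $a=2\alpha$, using Assumption~\ref{assum:alpha}.
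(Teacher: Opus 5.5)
Your proposal does not reach a proof. The step you single out as the main obstacle is the whole content of the lemma, and you leave it unresolved.

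The route you spend most of your effort on cannot work, and the problem is not bookkeeping. That route integrates by parts onto $\nabla_x\hat p_{s-r}$ to get a bound uniform in $N$ and in the positions. But no such uniform bound can be square integrable in $s-r$. A scaling check with $K=\chi\,|\cdot|^{-a}$, $0<a<\frac dk$ (which lies in $L^k$), shows that the $N\to\infty$ limit of $\Phi^i_{s,r}$ has $L^\infty$ norm of order $(s-r)^{-\frac{1+a}{2}}$, whose square is not integrable. So your remark that ``the factor $N^{-1}$ is not even needed'' is backwards: that factor is indispensable.

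Your final time-splitting is also misdescribed.
\begin{itemize}
\item On the far part, $2\gamma-1=\delta+\frac dk-\frac dp>0$ strictly (since $\delta>\frac dp$). Hence $\int_{N^{-a}}^1\tau^{-2\gamma}\,\dd\tau\asymp N^{a(2\gamma-1)}$ is a power of $N$, not a logarithm.
\item On the near part, Lemma~\ref{lem:OU-bounds} does not provide the $\tau$-uniform $H^\delta$ bound you quote.
\end{itemize}
The split can be made to close with $a=2\alpha$ once both points are corrected. You have not done this, and it is in any case unnecessary.

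The missing idea is to keep the derivative on the mollifier and pay for it in $N$ rather than in time, via $\|\nabla V^N\|_{L^1}\lesssim N^\alpha$. In your own setting, take $p\ge k\vee 2$ and $\frac1k+\frac1m=1+\frac1p$. Lemma~\ref{lem:OU-bounds} then gives
$\|\hat P^*_\tau\nabla V^N(\cdot-y)\|_{L^m}\lesssim\tau^{-\frac d2(\frac1k-\frac1p)}N^\alpha$ and $\|\nabla\hat P^*_\tau\nabla V^N(\cdot-y)\|_{L^m}\lesssim\tau^{-\frac12-\frac d2(\frac1k-\frac1p)}N^\alpha$.
Interpolate $\Phi^i_{s,r}$ between $L^p$ and $W^1_p$ and apply Young's inequality at each endpoint. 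This yields, uniformly in $y$,
\[
\|\Phi^i_{s,r}\|_{H^\delta_p}\lesssim\|K\|_{L^k}\,N^{\alpha}\,(s-r)^{-\frac\delta2-\frac d2(\frac1k-\frac1p)}.
\]
Its square is integrable once $\frac dp<\delta<1-\frac dk+\frac dp$, a nonempty range because $k>d$. Your BDG estimate then gives $\big(N^{-2}\cdot N\cdot N^{2\alpha}\big)^{1/2}=N^{-\frac12+\alpha}$. This is bounded exactly because $\alpha<\frac12$ under Assumption~\ref{assum:alpha}: the $N^{-1/2}$ gained from averaging $N$ independent stochastic integrals absorbs the $N^\alpha$ cost of $\nabla V^N$.

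This is the paper's argument. The paper takes $p=k$, $m=1$ and $\beta\in(\frac dk,1)$, and handles $k<2$ separately through the embedding $H^\beta_k\hookrightarrow H^{\beta_0}_2$. Your choice of $p\ge k\vee2$ with a Young exponent $m$ would, once corrected, treat all $k\in(d,\infty)$ at once.
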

\begin{proof}
\textbf{Case $k \in [2,\infty)$:}
Using the embedding $H^\beta_k \hookrightarrow L^{\infty}$ 
for $\beta \in (\frac{d}{k},1)$, see~\cite[Theorem~1(a) p.32]{RunstSickel}, it comes
\begin{equation*}
\left\|\|S_s^N\|_{L^\infty}\right\|_{L^q(\Omega)} \leq 
C \left\|\|S_s^N\|_{H^{\beta}_k}\right\|_{L^q(\Omega)}.
\end{equation*}
Since $S_s^N$ is a stochastic convolution integral and as such not a martingale, we freeze time $s$ in the semigroup  and consider the $H^\beta_k$-valued
martingale
\[S^{s,N}_u \coloneqq N^{-1} \sum_{i=1}^N \int_0^u K \ast 
\hat{P}^*_{s-r} \nabla V^N(\cdot - X_r^{i,N}) \cdot \dd W_r^i,\]
for $u \in [0,s]$. Hence, we have 
\begin{align*}
    \left\|\|S_s^N\|_{H^{\beta}_k}\right\|_{L^q(\Omega)} 
&= \left\|\|S^{s,N}_s\|_{H^{\beta}_k}\right\|_{L^q(\Omega)}
\leq \sup_{u \in [0,s]} \left\|\|S^{s,N}_u\|_{H^{\beta}_k}\right\|_{L^q(\Omega)} \\
&=
C N^{-1} \sup_{u \in [0,s]} \bigg\| \Big\| \sum_{i=1}^N 
\int_0^u K \ast \hat{P}^*_{s-r} \nabla V^N(\cdot - X_r^{i,N}) \cdot
\dd W_r^i \Big\|_{H^{\beta}_k}\bigg\|_{L^q(\Omega)} .
\end{align*}
Bessel potential spaces $H_k^\beta(\mathbb{R}^d)$, for $\beta\in\mathbb{R}$ and
 $2<k<\infty$ have martingale type $2$, see for e.g. \cite[Definition 12 and discussion
 afterwards]{GG22}. Therefore, we can apply the 
 BDG inequality in Banach spaces of martingale type $2$, see e.g. \cite[Theorem 4.7]{NVW15}, and obtain
\begin{align}
\label{eq:boundS-kLarge}
\left\|\|S_s^N\|_{H^{\beta}_k}\right\|_{L^q(\Omega)} 
&\leq C N^{-1} \sup_{u \in [0,s]} \bigg\|  \Big(\sum_{i=1}^N 
\int_0^u \Big\|K \ast \hat{P}^*_{s-r} \nabla V^N(\cdot - X_r^{i,N})\Big\|_{H^{\beta}_k}^2 
\dd r\Big)^{\frac{1}{2}}
 \bigg\|_{L^q(\Omega)} \nonumber\\
 &\leq C N^{-1}
\left(
\sum_{i=1}^N
\int_0^s
\left\|
\left\|
K\ast\hat P^*_{s-r}
\nabla V^N(\cdot-X_r^{i,N})
\right\|_{H_k^\beta}
\right\|_{L^q(\Omega)}^2
\,\dd r
\right)^{1/2},
\end{align}
where we used Minkowski's inequality in the last step. The final step of this proof will be carried out after we have reached an inequality similar to~\eqref{eq:boundS-kLarge} in the case $k<2$.

\textbf{Case $k \in (1,2)$:} In this case, we have $d=1$ 
and 
$H^{\beta}_k \hookrightarrow H^{\beta_0}_2 \hookrightarrow L^{\infty}$ for $\beta_0>1/2$ and $\beta-1/k \geq \beta_0 - 1/2$, see \cite[Theorem p.31, (2)]{RunstSickel} and \cite[Theorem~1(a) p.32]{RunstSickel}.
Therefore, 
we use BDG's inequality in $H^{\beta_0}_2$ and 
then the embedding to bound $H^{\beta_0}_2$-norm by $H^{\beta}_k$-norm, 

\begin{align}
\label{eq:boundS-kSmall}
\left\|\|S_s^N\|_{L^\infty}\right\|_{L^q(\Omega)} 
&\leq \left\|\|S_s^N\|_{H^{\beta_0}_2}\right\|_{L^q(\Omega)} \nonumber \\
&\leq N^{-1} \sup_{u \in [0,s]} \bigg\| \Big\| \Big(\sum_{i=1}^N 
\int_0^u |K \ast \hat{P}^*_{s-r} \nabla V^N(\cdot - X_r^{i,N})|^2 
\dd r \Big)^{\frac{1}{2}}
 \Big\|_{H^{\beta_0}_2}\bigg\|_{L^q(\Omega)}\nonumber \\
& \leq N^{-1}
\bigg(
\sum_{i=1}^N
\int_0^s
\Big\|
\big\|
K\ast\hat P^*_{s-r}
\nabla V^N(\cdot-X_r^{i,N})
\big\|_{H^{\beta_0}_2}
\Big\|_{L^q(\Omega)}^2
\,\dd r
\bigg)^{1/2}\nonumber \\
&\leq N^{-1}
\bigg(
\sum_{i=1}^N
\int_0^s
\Big\|
\big\|
K\ast\hat P^*_{s-r}
\nabla V^N(\cdot-X_r^{i,N})
\big\|_{H_k^\beta}
\Big\|_{L^q(\Omega)}^2
\,\dd r
\bigg)^{1/2}.
\end{align}

Hence in both case $k>2$ and $k<2$, we have obtained respectively from~\eqref{eq:boundS-kLarge} an~\eqref{eq:boundS-kSmall} that for all $k>d$, there exists $\beta \in (0,1)$ such that: 
\begin{equation}\label{intermediate_bound}
\left\|\|S_s^N\|_{L^\infty}\right\|_{L^q(\Omega)} \leq N^{-1}
\bigg(
\sum_{i=1}^N
\int_0^s
\Big\|
\big\|
K\ast\hat P^*_{s-r}
\nabla V^N(\cdot-X_r^{i,N})
\big\|_{H_k^\beta}
\Big\|_{L^q(\Omega)}^2
\,\dd r
\bigg)^{1/2}.
\end{equation}
Hence, in \eqref{intermediate_bound}, we need to derive integrable in time bounds
for $\big\|
\|K\ast\hat P^*_{s-r} \nabla V^N(\cdot-X_r^{i,N})\|_{H_k^\beta}\big\|_{L^q(\Omega)}^2$. 
Using the interpolation inequality between $L^k$ and $H^1_k = W^1_k$, see e.g.
 \cite[Proposition 6.6.4]{Haase06}, with $A = I- \Delta$, we have
\begin{equation*}
\big\| K \ast \hat{P}^*_{s-r}\nabla V^N(\cdot-X_r^{i,N}) \big\|_{H^{\beta}_k}
 \leq \big\| K \ast \hat{P}^*_{s-r}\nabla
V^N(\cdot-X_r^{i,N})\big\|_{L^k}^{1-\beta} \big\| 
K \ast \hat{P}^*_{s-r}\nabla V^N(\cdot-X_r^{i,N})\big\|_{W^{1}_k}^{\beta}. 
\end{equation*}
Hence, Young's convolution inequality gives
\begin{equation}\label{eq:interpolation}
\big\| K \ast \hat{P}^*_{s-r}\nabla V^N(\cdot-X_r^{i,N})\big\|_{H^{\beta}_k} 
\leq \|K\|_{L^k} \big\| \hat{P}^*_{s-r}\nabla V^N(\cdot-X_r^{i,N})\big\|_{L^1}^{1-\beta}
 \big\| \hat{P}^*_{s-r}\nabla V^N(\cdot-X_r^{i,N})\big\|_{W^{1}_1}^{\beta}.
\end{equation}
Using Lemma~\ref{lem:OU-bounds}\ref{item:heat-bounds} for the second inequality and $\int_{\R^d}g_\kappa^F(s-r, x, y)\, \dd y = 1$ together with
$\|\nabla V^N\|_{L^1} \leq C N^{\alpha}$ for the last inequality,
 there exist $C, \kappa > 0$ such that 
for all $1>s > r>0$,
\begin{align}\nonumber
\big\| \nabla(\hat{P}^*_{s-r}\nabla V^N(\cdot-X_r^{i,N})) \big\|_{L^1} &= \int_{\R^d}
 \left|\int_{\R^d} \nabla_y \hat{p}_{s-r}(x, y) \nabla V^N(x-X_r^{i,N}) 
 \dd x\right| \dd y\\ \nonumber
& \leq C (s-r)^{-\frac{1}{2}} \int_{\R^d} \int_{\R^d}
 g^F_\kappa(s-r, x, y) |\nabla V^N(x-X_r^{i,N})|  \dd x\, \dd y\\ \label{one}
& \leq C (s-r)^{-\frac{1}{2}} N^{\alpha}.
\end{align}
Similarly, using Lemma \ref{lem:OU-bounds}\ref{item:OU-Lp}, there 
exists a constant $C>0$ 
such that for all $1>s > r>0$,
\begin{equation}\label{two}
   \big\| \hat{P}^*_{s-r} 
        \nabla V^N(\cdot-X_r^{i,N})\big\|_{L^{1}} 
        \leq \|P_{s-r}\|_{L^{\infty} \to L^{\infty}} 
        \| \nabla V^N(\cdot-X_r^{i,N})\|_{L^1}
        \leq C N^{\alpha}.
\end{equation}
Hence, by 
plugging \eqref{one} and \eqref{two} in \eqref{eq:interpolation} 
and taking the $L^q(\Omega)$ norm, we have
\begin{equation*}
\Big\|
\big\|K\ast\hat P^*_{s-r}
\nabla V^N(\cdot-X_r^{i,N})
\big\|_{H_k^\beta}
\Big\|_{L^q(\Omega)}^2 \leq C (s-r)^{-\beta} N^{2 \alpha},
\end{equation*}
which is integrable in time since $\beta < 1$. Hence, in the end
$\big\|\|S_s^N\|_{L^\infty}\big\|_{L^q(\Omega)} \leq 
C N^{-\frac{1}{2}+ \alpha}$, which is uniformly bounded in $N$, see Assumption \ref{assum:alpha}.
\end{proof}

The following lemma gives uniform bounds in $N$ for 
the moments of the particles. This bound will be essential
in the proof of Proposition~\ref{lemma:stochastic-integral}.

\begin{lemma}
\label{lem:uniform_bounds} 
Under Assumptions~\ref{assum:potentials} and~\ref{assum:alpha}, let $q \in [2, \infty)$, and assume that  \( Q = \sup_{N \in \mathbb{N}} \max_{1\le i\le N}
 \E[|X_0^{i,N}|^q] \) is finite. Then the moments of the particles are finite, uniformly in time and $N$, that is
\[ f(Q) \coloneqq \sup_{t >0} \sup_{N \in \mathbb{N}} \max_{1\le i\le N}
\mathbb{E} \big[  |X_t^{i,N}|^q \big] <\infty.
\]  
\end{lemma}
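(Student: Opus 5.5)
The plan is to combine an Itô computation on $|X_t^{i,N}|^q$ with an almost sure, $N$-uniform bound on the drift $\|K\ast u_t^N\|_{L^\infty}$. The trivial bound $\|K\ast u_t^N\|_{L^\infty}\le \|K\|_{L^k}\|V^N\|_{L^{k^*}}\lesssim N^{d\alpha/k}$ is useless here. Instead I will use the mild formulation \eqref{eq:utn-mild} convolved with $K$, and control the resulting stochastic convolution by Lemma~\ref{lem:uniform_convolution}.

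\textbf{Case $k=\infty$.} This case is immediate. Since $u_t^N$ is a probability density, $\|K\ast u_t^N\|_{L^\infty}\le \|K\|_{L^\infty}$, and only the Itô step below is needed.

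\textbf{Step 1: drift bound on $[0,1]$ for $k\in(d,\infty)$.} Take $t\in(0,1]$. Apply $K\ast$ to \eqref{eq:utn-mild} (tested against $f=K(x-\cdot)$) and use $\|K\ast \rho\|_{L^\infty}\le \|K\|_{L^k}\|\rho\|_{k^*}$. By Lemma~\ref{lem:OU-bounds}\ref{item:OU-Lp}, the initial term is bounded by $C t^{-\frac{d}{2k}}$, uniformly in $u_0^N$.
\begin{itemize}
\item The $F$-term is handled exactly as $E^{1,N}$ in Proposition~\ref{prop:mild-formulation} and contributes $CN^{-\alpha}$.
\item The interaction term is handled through \eqref{eq:beginning-Step2} with $h(x)=\langle \mu_s^N, V^N(x-\cdot)\,K\ast u_s^N\rangle$. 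Since $\|h\|_{L^1}\le \|K\ast u_s^N\|_{L^\infty}$, this term is at most $C\int_0^t (t-s)^{-\frac12-\frac{d}{2k}}\|K\ast u_s^N\|_{L^\infty}\,\dd s$.
\item The martingale term is exactly $\sqrt2\,S_t^N$ (up to sign).
\end{itemize}
The Volterra–Grönwall lemma \cite[Lemma 7.1.1]{Henry} then gives, almost surely,
\[
\|K\ast u_t^N\|_{L^\infty}\le C\Big(t^{-\frac{d}{2k}}+1+\|S_t^N\|_{L^\infty}+\int_0^t (t-s)^{-\frac12-\frac{d}{2k}}\|S_s^N\|_{L^\infty}\,\dd s\Big)\eqqcolon C t^{-\frac{d}{2k}}+R_t^N .
\]
By Lemma~\ref{lem:uniform_convolution} and Minkowski's inequality, $\sup_{t\le 1}\sup_N\|R_t^N\|_{L^p(\Omega)}<\infty$ for every $p\ge 2$. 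None of these constants depend on the initial condition.

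\textbf{Step 2: Itô and Grönwall on unit intervals.} By Itô's formula, the coercivity of $F$ and Young's inequality,
\[
\frac{\dd}{\dd t}\E|X_t^{i,N}|^q\le -q\lambda\,\E|X_t^{i,N}|^q + C+ q\,\E\big[|X_t^{i,N}|^{q-1}\|K\ast u_t^N\|_{L^\infty}\big].
\]
I will split the last term according to Step 1, because the deterministic singularity and the random part must be treated differently.
\begin{itemize}
\item For the deterministic part, use $|x|^{q-1}\le 1+|x|^q$. This gives $Ct^{-\frac{d}{2k}}(1+\E|X_t|^q)$, with a coefficient integrable in $t$.
\item For the random part, use Young's inequality: $|x|^{q-1}R\le \frac{\lambda}{2}|x|^q+C R^q$, and $\E (R_t^N)^q$ is bounded uniformly.
\end{itemize}
Putting $m(t)=\max_i\E|X_t^{i,N}|^q$, we get $m'(t)\le(-\frac{q\lambda}{2}+Ct^{-\frac{d}{2k}})m(t)+C(1+t^{-\frac{d}{2k}})$ on $(0,1]$. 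This yields $m(1)\le \rho\, m(0)+C$ with $\rho=\exp(-\frac{q\lambda}{2}+C')$, and $\sup_{t\le1}m(t)\le C(1+m(0))$.

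The main obstacle is that $\rho$ need not be $<1$, because the singular factor $t^{-\frac{d}{2k}}$ reappears if one naively restarts at each integer. To remove it, for $t\in[n,n+1]$ with $n\ge1$ I will restart the mild formulation at time $n-1$, which is legitimate by the Markov property. The elapsed time $t-(n-1)$ then lies in $[1,2]$, so the initial term is bounded by a constant, and the argument of Step 1 on horizon $2$ gives $\|K\ast u_t^N\|_{L^\infty}\le C+R_t^N$ with no singularity. This requires Lemma~\ref{lem:uniform_convolution} on $(0,2]$, whose proof carries over verbatim.

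The differential inequality on $[1,\infty)$ then becomes $m'\le -\frac{q\lambda}{2}m+C$, which gives $\sup_{t\ge1}m(t)\le m(1)+C$. Combined with the bound on $[0,1]$, this gives $f(Q)\le C(1+Q)<\infty$.
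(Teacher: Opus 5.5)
Your argument is correct, but it closes the moment estimate by a different route than the paper.

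\textbf{What is shared.} Both proofs bound $\|K\ast u_t^N\|_{L^\infty}$ through the mild formulation \eqref{eq:utn-mild}, the Volterra--Gr\"onwall lemma and Lemma~\ref{lem:uniform_convolution}. Both then insert this bound into It\^o's formula for $|X_t^{i,N}|^q$.

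\textbf{How the paper closes the inequality.} The paper uses H\"older's inequality,
$\E\big[|X_t^{i,N}|^{q-1}\|K\ast u_t^N\|_{L^\infty}\big]\le \E\big[|X_t^{i,N}|^q\big]^{\frac{q-1}{q}}\,\E\big[\|K\ast u_t^N\|_{L^\infty}^q\big]^{\frac1q}$,
and then passes to $g(t)=(1+\E|X_t^{i,N}|^q)^{1/q}$. This makes the inequality linear: $g'\le-\lambda g+\E[\|K\ast u_t^N\|_{L^\infty}^q]^{1/q}+C$. The singular factor $t^{-\frac{d}{2k}}$ then enters only as an additive, integrable source term, and the decay rate is untouched. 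Restarting at every integer time immediately gives $g(n+1)\le e^{-\lambda}g(n)+C$.

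\textbf{Where your obstacle comes from.} Your bound $|x|^{q-1}\le 1+|x|^q$ turns the deterministic part of the drift into a multiplicative perturbation $Ct^{-\frac{d}{2k}}$ of the decay rate. That is what produces your one-step factor $\rho$ that need not be $<1$. It is an artefact of that step, not of the problem. Had you kept $\E|X_t^{i,N}|^{q-1}\le \big(\E|X_t^{i,N}|^{q}\big)^{\frac{q-1}{q}}$, the substitution $g=(1+m)^{1/q}$ would have removed it.

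\textbf{Your fix is valid.} Restarting the mild formulation at time $n-1$, so that the elapsed time lies in $[1,2]$, is legitimate. The Volterra--Gr\"onwall step and Lemma~\ref{lem:uniform_convolution} hold on horizon $2$ with constants independent of the initial configuration. Young's inequality with a small parameter then gives $m'\le-\frac{q\lambda}{2}m+C$ on $[1,\infty)$.

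\textbf{Comparison.} Both proofs reach the same conclusion; yours has decay rate $q\lambda/2$ instead of $q\lambda$, which is irrelevant here. The paper's change of unknown is the cheaper fix: it avoids the overlapping windows and the extension of Lemma~\ref{lem:uniform_convolution} to horizon $2$ altogether.
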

\begin{proof}
Using Itô's formula, the inequality $(F(x) - F(0)) \cdot x \geq \lambda |x|^2$ and Hölder's inequality, we have for all $q \in [2, \infty)$, 
\begin{align*}
\frac{\dd}{\dd t} \mathbb{E}\big[|X_t^{i,N}|^q\big] =&
-q\mathbb{E}\left[|X_t^{i,N}|^{q-2}X_t^{i,N}\cdot F(X_t^{i,N})\right] -
q \mathbb{E}\left[|X_t^{i,N}|^{q-2} X_t^{i,N} \cdot K \ast 
u_t^N(X_t^{i,N})\right]\\
&
+ q(d+q-2) \mathbb{E}\big[|X_t^{i,N}|^{q-2}\big]\\
 \leq & -\lambda q \mathbb{E}\big[|X_t^{i,N}|^q\big]  
 + q |F(0)| \E\big[|X_t^{i,N}|^{q-1}\big] +
 q  \mathbb{E}\big[|X_t^{i,N}|^{q-1} \|K \ast u_t^N\|_{L^\infty}\big]\\
&+ q(d+q-2) \mathbb{E}\big[|X_t^{i,N}|^{q}\big]^{\frac{q-2}{q}}\\
 \leq & -\lambda q \mathbb{E}\big[|X_t^{i,N}|^q\big] + 
 q |F(0)| \E\big[|X_t^{i,N}|^{q}\big]^{\frac{q-1}{q}} +
q  \mathbb{E}\big[|X_t^{i,N}|^{q}\big]^{\frac{q-1}{q}} 
\E\big[ \|K \ast u_t^N\|_{L^\infty}^q\big]^\frac{1}{q}\\
&+ q(d+q-2) \mathbb{E}\big[|X_t^{i,N}|^{q}\big]^{\frac{q-2}{q}} .
\end{align*}
Now consider
\begin{equation*}
g(t) \coloneqq \big(1+\E\big[|X^{i,N}_t|^{q}\big]\big)^{\frac{1}{q}}.
\end{equation*}
Then, using the previous inequality and since $\E\big[|X^{i,N}_t|^{q}\big]^{\frac{q-1}{q}} \leq g(t)^{q-1}$ and $\E\big[|X^{i,N}_t|^{q}\big]^{\frac{q-2}{q}} \leq g(t)^{q-2}$, we have
\begin{align*}
\frac{\dd}{\dd t}g(t) &= \frac{1}{q}(1+\E\big[|X^{i,
N}_t|^{q}\big])^{\frac{1}{q}-1}\frac{\dd}{\dd t}
\E\big[|X^{i,N}_t|^{q}\big] = \frac{1}{q}g(t)^{1-q}\frac{\dd}{\dd t}
\E\big[|X^{i,N}_t|^{q}\big]\\
& \leq g(t)^{1-q}\Big(-\lambda (g(t)^{q}-1) +  |F(0)| g(t)^{q-1} +
\E\big[ \|K \ast u_t^N\|_{L^\infty}^q\big]^\frac{1}{q} g(t)^{q-1}
+ (d+q-2) g(t)^{q-2}\Big).
\end{align*}
Developping the parenthesis and since 
$g(t)^{1-q}\leq 1$ and $g(t)^{-1} \leq 1$, we have
\begin{align*}
\frac{\dd}{\dd t}g(t) &\leq -\lambda g(t) + \lambda g(t)^{1-q}+ |F(0)| + \E\big[ \|K \ast u_t^N\|_{L^\infty}^q\big]^\frac{1}{q}
+ (d+q-2) g(t)^{-1}  \\
&\leq -\lambda g(t) + \E\big[ \|K \ast u_t^N\|_{L^\infty}^q\big]^\frac{1}{q} + (d+q-2) + \lambda+ |F(0)| \\
&\leq -\lambda g(t) + \E\big[\|K \ast u_t^N\|_{L^\infty}^q\big]^\frac{1}{q} + C.
\end{align*}
Using Grönwall's differential inequality, there exists $C >0$ such that
for all $t>0$,
\begin{equation}
\label{eq:moment_bound_g}
g(t) \leq e^{-\lambda t} g(0) + C \int_0^t e^{-\lambda(t-s)}\left(1 + \E[ \|K \ast u_s^N\|_{L^\infty}^q]^\frac{1}{q}\right) \dd s.
\end{equation}  
Now let us bound $\E[ \|K \ast u_s^N\|_{L^\infty}^q]^\frac{1}{q}$. Using
the mild equation~\eqref{eq:utn-mild} for $u_s^N$,
Lemma~\ref{lem:OU-bounds}\ref{item:OU-Lp} and similar computations as in the proof of Proposition~\ref{prop:mild-formulation}, 
there exists $C >0$ such that for all $s \in (0,1]$,
\begin{align*}
 \|K \ast u_s^N\|_{L^\infty} &
 \leq \| K \ast \hat{P}^*_s u_0^N\|_{L^\infty}
  + \int_0^s \left\|K \ast \hat{P}^*_{s-r} \nabla \cdot
\left(\int_{\R^d}
V^N(\cdot-y)\bigl(F(\cdot)-F(y)\bigr)\,
\mu_r^N(\dd y)\right)
\right\|_{L^\infty}\dd r\\
& \quad + \int_0^s
\left\|K \ast \hat{P}^*_{s-r} \nabla \cdot
\left(\int_{\R^d}
V^N(\cdot-y)\,(K\ast u_r^N)(y)\,
\mu_r^N(\dd y)\right)
\right\|_{L^\infty} \dd r\\
& \quad + N^{-1} \bigg\|
         \int_0^s \sum_{i=1}^N  K \ast \hat{P}^*_{s-r}
          \nabla V^N(\cdot - X_r^{i,N}) \cdot 
          \dd W_r^i \bigg\|_{L^\infty}
\\
  &\leq \| K \ast \hat{P}^*_s u_0^N\|_{L^\infty} 
+ C \int_0^s (s-r)^{-\frac{1}{2}-\frac{d}{2k}} N^{-\alpha} \dd r+
C  \int_0^s (s-r)^{-\frac{1}{2}-\frac{d}{2k}} \| K \ast 
u_r^N\|_{L^\infty} \dd r\\
& \quad + C N^{-1} \bigg\| \int_0^s \sum_{i=1}^N  K \ast \hat{P}^*_{s-r} 
\nabla V^N(\cdot - X_r^{i,N}) \cdot \dd W_r^i\bigg\|_{L^\infty}\\
& \leq \|K\|_{L^k} \|\hat{P}_s\|_{L^k \to L^\infty}
 \|u_0^N\|_{TV}
+ C s^{\frac{1}{2}-\frac{d}{2k}} N^{-\alpha} 
+ C \int_0^s (s-r)^{-\frac{1}{2}-\frac{d}{2k}} \| K 
\ast u_r^N\|_{L^\infty} \dd r\\
&\quad + C  N^{-1} \bigg\| \int_0^s \sum_{i=1}^N  K \ast \hat{P}^*_{s-r} 
\nabla V^N(\cdot - X_r^{i,N}) \cdot \dd W_r^i\bigg\|_{L^\infty}\\
& \leq C s^{-\frac{d}{2k}} + C s^{\frac{1}{2}-\frac{d}{2k}}
 N^{-\alpha}
+ C \| S_s^N\|_{L^\infty} + C \int_0^s (s-r)^{-\frac{1}{2}-\frac{d}{2k}} \| K 
\ast u_r^N\|_{L^\infty} \dd r, 
\end{align*}
where $S^N_{s}$ is defined in Lemma~\ref{lem:uniform_convolution}. 
Using Gr\"onwall-Volterra's 
inequality, see \cite[Lemma 7.1.1]{Henry}, 
we have 
\begin{align*}
\|K \ast u_s^N\|_{L^\infty} 
&\leq C s^{-\frac{d}{2k}} + C s^{\frac{1}{2}-\frac{d}{2k}} N^{-\alpha}
+ C  \|S_s^N\|_{L^\infty}\\
&\quad  +C \int_0^s (s-r)^{-\frac{1}{2}-\frac{d}{2k}} 
\left(r^{-\frac{d}{2k}}+r^{\frac{1}{2}-\frac{d}{2k}} N^{-\alpha} +
 \|S_r^N\|_{L^\infty}\right) \dd r .
\end{align*}
Hence, 
\begin{align*}
\left\|\|K \ast u_s^N\|_{L^\infty}\right\|_{L^q(\Omega)} 
&\leq    C s^{-\frac{d}{2k}} +
     C s^{\frac{1}{2}-\frac{d}{2k}}(N^{-\alpha} +1)+
     C \left\|\|S_s^N\|_{L^\infty}\right\|_{L^q(\Omega)} \\
     &\quad +        C \int_0^s (s-r)^{-\frac{1}{2}-\frac{d}{2k}}
        \left\|\|S_r^N\|_{L^\infty}\right\|_{L^q(\Omega)} \dd r.
\end{align*}

If $k < \infty$, using Lemma~\ref{lem:uniform_convolution}, we have
a uniform bound in $N$ and in $r$ for $\left\| \| S_r^N\|_{L^\infty}\right\|_{L^q(\Omega)}$ and hence
\begin{align*}
\left\|\|K \ast u_s^N\|_{L^\infty}\right\|_{L^q(\Omega)} &\leq
    C s^{-\frac{d}{2k}} +
     C \int_0^s (s-r)^{-\frac{1}{2}-\frac{d}{2k}} \dd r
     \leq C s^{-\frac{d}{2k}} + C s^{\frac{1}{2}-\frac{d}{2k}} \leq C s^{-\frac{d}{2k}},
\end{align*}
which is integrable on $(0,1)$ since $k>d$.
Note that if $k = \infty$, $\|K \ast u_s^N\|_{L^\infty} \leq \|K\|_{L^\infty} \|u_s^N\|_{TV}$, 
and Equation~\eqref{eq:moment_bound_g} is then bounded by a 
constant $C>0$ independent of $N$ and $t$.
By plugging the bound on $\big\|\|K \ast u_s^N\|_{L^\infty}\big\|_{L^q(\Omega)}$ in~\eqref{eq:moment_bound_g}, we have for any $t \in (0,1]$,
\begin{align*}
g(t) \leq e^{-\lambda t} g(0) + C,
\end{align*}
where $C$ does not depend on the initial conditions. 
Using the Markov property of the $N$-particle system and since the
constant $C$ above is uniform with respect to the initial condition, the same
estimate can be obtained at any time-interval of length $1$: for every
$n \in \mathbb{N}$,
\[
g(n+1) \leq e^{-\lambda} g(n) + C.
\]
Iterating this inequality yields that there exists a constant $C>0$ such that for all
$t>0$,
\[
g(t)
\leq C e^{-\lambda t} g(0) + C.\]
Hence, for any $t>0$,
\[
\E\bigl[|X_t^{i,N}|^q\bigr]
\leq
C e^{-\lambda q t}
\E\bigl[|X_0^{i,N}|^q\bigr]
+ C.
\]
The result follows by taking $\sup_{N \in \mathbb{N}}
 \max_{1\le i\le N}$ of the previous inequality.
\end{proof}

The following lemma is a technical result that will be used in 
the proof of Proposition~\ref{lemma:stochastic-integral}. For the sake of readability, we postpone its proof to the Appendix.

\begin{lemma}\label{some-bounds}
For any $\varepsilon\in (0,4\alpha)$ and any $q>1$, there exists a constant
$C>0$ such that, for any $t\in(0,1]$ and any measurable function
$a:[0,1]\to\mathbb R^d$,
\begin{align*}
&\int_0^t
\left\|
(1+|\cdot|)^{\frac{d+1}{q}}
\hat P_{t-s}^*
\big(\nabla V^N(\cdot-a_s)\big)
\right\|_{L^2}^2\,\dd s
 \leq
C N^{d\alpha+\frac{\varepsilon}{2}}
\int_0^t
(t-s)^{-1+\frac{\varepsilon}{4\alpha}}
(1+|a_s|)^{2\frac{d+1}{q}}\,\dd s .
\end{align*}
\end{lemma}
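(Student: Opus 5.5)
The plan is to prove the bound pointwise in $s$ and then integrate. Fix $s\in(0,t)$ and set $\tau\coloneqq t-s\in(0,1]$, $a\coloneqq a_s$ and $w(y)\coloneqq(1+|y|)^{\frac{d+1}{q}}$. By the kernel representation,
\[
h(y)\coloneqq \hat P_\tau^*\big(\nabla V^N(\cdot-a)\big)(y)=\int_{\R^d}\hat p_\tau(x,y)\,\nabla V^N(x-a)\,\dd x .
\]
It then suffices to show
\begin{equation*}
\|w\,h\|_{L^2}^2\le C\,N^{d\alpha+\frac{\varepsilon}{2}}\,\tau^{-1+\frac{\varepsilon}{4\alpha}}\,(1+|a|)^{2\frac{d+1}{q}} .
\end{equation*}
Integrating this over $s\in(0,t)$ gives the lemma. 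I will get it by interpolating between two elementary estimates.

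\textbf{Step 1 (removing the weight).} On the support of $V^N(\cdot-a)$ we have $|x-a|\le C N^{-\alpha}\le C$. Since $F$ is Lipschitz and $\tau\le1$, the flow satisfies $|\theta_\tau(x)|\le C(1+|x|)$, hence $w(y)\le C(1+|a|)^{\frac{d+1}{q}}(1+|y-\theta_\tau(x)|)^{\frac{d+1}{q}}$. Because $\tau\le 1$, the polynomial factor is absorbed into the Gaussian:
\[
(1+|y-\theta_\tau(x)|)^{\frac{d+1}{q}}\,g^F_{\kappa}(\tau,x,y)\le C\,g^F_{\kappa/2}(\tau,x,y).
\]
So, in every pointwise bound on $h$ coming from Lemma~\ref{lem:OU-bounds}\ref{item:heat-bounds}, the weight only costs a factor $C(1+|a|)^{\frac{d+1}{q}}$ and a smaller $\kappa$. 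This bookkeeping is the step I expect to need the most care. In particular I must check that the constants stay uniform in $a$, which should follow from $\sup_{\tau\le1}\|\nabla\theta_\tau\|_\infty<\infty$.

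\textbf{Step 2 (two bounds).} (A) Integrate by parts in $x$: $h(y)=-\int V^N(x-a)\,\nabla_x\hat p_\tau(x,y)\,\dd x$. The gradient bound in Lemma~\ref{lem:OU-bounds}\ref{item:heat-bounds} gives $|w h|(y)\le C(1+|a|)^{\frac{d+1}{q}}\tau^{-1/2}\int V^N(x-a)\,g^F_{\kappa'}(\tau,x,y)\,\dd x$. By Minkowski, $\sup_x\|g^F_{\kappa'}(\tau,x,\cdot)\|_{L^2}\le C\tau^{-d/4}$, and $V^N$ is a probability density, so
\[
\|wh\|_{L^2}^2\le C(1+|a|)^{2\frac{d+1}{q}}\,\tau^{-1-\frac d2}.
\]
(B) Without integrating by parts, $|wh|(y)\le C(1+|a|)^{\frac{d+1}{q}}\int|\nabla V^N(x-a)|\,g^F_{\kappa'}(\tau,x,y)\,\dd x$. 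This is bounded in $L^2$ by $\|\nabla V^N\|_{L^2}$, using Lemma~\ref{lem:OU-bounds}\ref{item:sgG-Lp} with $p_1=p_2=2$ applied to the dual. Since $\|\nabla V^N\|_{L^2}^2\le CN^{\alpha(d+2)}$, this gives
\[
\|wh\|_{L^2}^2\le C(1+|a|)^{2\frac{d+1}{q}}\,N^{\alpha(d+2)}.
\]

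\textbf{Step 3 (interpolation).} For $\gamma\in[0,1]$, $\min(A,B)\le A^{1-\gamma}B^\gamma$. Choose
\[
\gamma=\frac{d}{d+2}+\frac{\varepsilon}{2\alpha(d+2)} ,
\]
which lies in $[0,1]$ precisely because $\varepsilon\le4\alpha$; this is the role of the hypothesis $\varepsilon\in(0,4\alpha)$. The $N$-exponent is then $\alpha(d+2)\gamma=d\alpha+\frac{\varepsilon}{2}$. The $\tau$-exponent is
\[
-(1+\tfrac d2)(1-\gamma)=-(1+\tfrac d2)\Big(\tfrac{2}{d+2}-\tfrac{\varepsilon}{2\alpha(d+2)}\Big)=-1+\tfrac{\varepsilon}{4\alpha}.
\]
Substituting $\tau=t-s$, $a=a_s$ and integrating over $s\in(0,t)$ yields the claim. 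Here $\tau^{-1+\varepsilon/(4\alpha)}$ is integrable on $(0,t)$, although the lemma keeps the integral in its statement anyway.
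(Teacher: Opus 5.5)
Your proof is correct and follows essentially the same route as the paper. Both arguments control the polynomial weight through the linear growth of $\theta_\tau$ and the compact support of $V^N$, prove one bound $\lesssim N^{(d+2)\alpha}$ via $\|\nabla V^N\|_{L^2}$ and another by integrating by parts onto $\nabla_x\hat p_\tau$, and then interpolate.

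The only difference is in the integration-by-parts endpoint. You put $V^N$ in $L^1$ and the kernel in $L^2$, which gives $\tau^{-1-d/2}$, whereas the paper puts $V^N$ in $L^2$, which gives $\tau^{-1}N^{d\alpha}$. The paper's endpoint is exactly your interpolation at $\gamma=\frac{d}{d+2}$, so the two families of interpolated bounds coincide on the range that matters here.
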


The following proposition gives an $L^2(\PP)$ bound 
on $\|M_t^N\|_{k^*}$, where the stochastic convolution integral $M^N$ was defined in 
Proposition~\ref{prop:mild-formulation}. This is
this term which induces the restriction on the 
mollifier parameter $\alpha$, stated in Assumption~\ref{assum:alpha}. Its proof uses the result of Lemma~\ref{lem:uniform_bounds} and of
 Lemma~\ref{some-bounds}. In fact, a careful inspection of the proof of 
 Proposition~\ref{lemma:stochastic-integral} shows that Lemma \ref{lem:uniform_bounds} is only applied when
$k \geq 2$ and in such case, for the moment of order
\(q=(d+1)\frac{k-2}{k}.\)
\begin{proposition}
\label{lemma:stochastic-integral}
Suppose that Assumptions~\ref{assum:potentials}, \ref{assum:alpha} 
and \ref{assum:initial} hold. For any $\varepsilon > 0$, there exists a constant $C>0$
such that for any $t \in (0,1]$,
\[
\bigl\|\|M_t^N\|_{k^*}\bigr\|_{L^2(\PP)}
\leq C N^{-\min(\alpha, \frac{1}{2}-
\frac{d \alpha}{2}-\varepsilon)}
\Big(1+\mathbbm{1}_{k \in (1,2)}\sqrt{ t^{-\frac12-\frac{d}{2k}}+t^{-\frac{d}{k}}}\big\lVert\mathbb{W}_1(u_0^N,u_0) \big\rVert_{L^2(\PP)} \Big),
\]
where $C$ depends on the initial conditions only 
through $f(Q)$, defined in Lemma~\ref{lem:uniform_bounds}.
\end{proposition}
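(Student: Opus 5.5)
The plan is to exploit the linear structure of $M_t^N$: it is a stochastic convolution of the deterministic kernels $\hat P^*_{t-s}\nabla V^N(\cdot-y)$ evaluated at $y=X_s^{i,N}$. First I freeze $t$, exactly as in the proof of Lemma~\ref{lem:uniform_convolution}, so that $u\mapsto M^{t,N}_u$, $u\in[0,t]$, is a genuine martingale with $M^{t,N}_t=M^N_t$. The argument then splits according to whether $L^{k^*}$ is of martingale type $2$.

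\textbf{Case $k\ge 2$.} Here $k^*\le 2$, so I cannot apply BDG directly in $L^{k^*}$. Instead I embed a weighted $L^2$ space into $L^{k^*}$. Hölder's inequality with $\frac1{k^*}=\frac12+\frac1r$, $r=\frac{2k}{k-2}$, gives
\[
\|h\|_{L^{k^*}}\le \big\|(1+|\cdot|)^{-\frac{d+1}{q}}\big\|_{L^r}\,\big\|(1+|\cdot|)^{\frac{d+1}{q}}h\big\|_{L^2}.
\]
The first factor is finite as soon as $r\frac{d+1}{q}>d$. Up to an arbitrarily small adjustment, this is where the choice $q=2\vee(d+1)(1-\frac2k)$ of Assumption~\ref{assum:initial} enters. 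The weighted $L^2$ space is a Hilbert space, so Itô's isometry and the independence of the $W^i$ give
\[
\E\|M_t^N\|_{k^*}^2\le \frac{C}{N^2}\sum_{i=1}^N\E\int_0^t\big\|(1+|\cdot|)^{\frac{d+1}{q}}\hat P^*_{t-s}\nabla V^N(\cdot-X_s^{i,N})\big\|_{L^2}^2\,\dd s .
\]
I then apply Lemma~\ref{some-bounds} pathwise with $a_s=X_s^{i,N}$. The time singularity $(t-s)^{-1+\frac{\varepsilon}{4\alpha}}$ is integrable. The factor $\E(1+|X_s^{i,N}|)^{2\frac{d+1}{q}}$ is bounded uniformly through Lemma~\ref{lem:uniform_bounds}, since $2\frac{d+1}{q}\le q$ for the chosen $q$. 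This yields $\E\|M_t^N\|_{k^*}^2\le C N^{-1+d\alpha+\varepsilon/2}$, which is the rate $N^{-\frac12+\frac{d\alpha}{2}+\varepsilon}$ after renaming $\varepsilon$.

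\textbf{Case $k<2$ (so $d=1$).} Now $k^*>2$, and $L^{k^*}$ has martingale type $2$; better, the Burkholder square-function inequality holds in $L^{k^*}$. This gives
\[
\E\|M_t^N\|_{k^*}^2\lesssim \E\Big\|\frac1{N}\int_0^t\!\!\int |\hat P^*_{t-s}\nabla V^N(\cdot-y)|^2\,\mu_s^N(\dd y)\,\dd s\Big\|_{L^{k^*/2}}\cdot\frac1N .
\]
A naive bound, using only $\|\hat P^*_{t-s}\nabla V^N\|_{k^*}\lesssim (t-s)^{-1/2}N^{\alpha/k}$, loses integrability in time and gives the weaker rate $N^{-\frac12+\frac{d\alpha}{k}}$. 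I would therefore exploit the averaging in $\mu^N_s$. I replace $\mu_s^N$ by its mollified version, or by $u_s$ itself. The main term is then $\frac1N\int_0^t\| (\text{kernel}^2)\ast u_s\|_{L^{k^*/2}}\,\dd s$. Using $\|u_s\|_{L^{k^*}}\lesssim s^{-\frac d{2k}}$ from \eqref{ut-time-reg} and Young's inequality with the $L^1$-norm of the squared kernel, this is $O(N^{-1+d\alpha+\varepsilon})$. The remainder is controlled by $\|u_s^N-u_s\|_{k^*}$ together with a factor $N^{-2\alpha}$ to a power.

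\textbf{Main obstacle.} I expect the difficulty to be this remainder. The natural tool for it is Proposition~\ref{prop:mild-formulation}, but that bound contains $M^N$ itself. I plan to close the estimate by absorbing or bootstrapping: the remainder enters under a square root. That is why the statement carries the factor
\[
\sqrt{t^{-\frac12-\frac dk}+t^{-\frac dk}}\,\big\|\mathbb W_1(u_0^N,u_0)\big\|_{L^2(\PP)} ,
\]
and why a Young-type inequality $\sqrt{ab}\le \delta a+b/\delta$ should let the contributions of $\|M_s^N\|_{k^*}$ be reabsorbed after a Volterra–Grönwall argument. Getting the time singularities to stay integrable in this bootstrap is the step I regard as hardest.
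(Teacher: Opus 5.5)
\textbf{The case $k\ge 2$.} Your argument here is the paper's: H\"older into a weighted $L^2$ space, the It\^o isometry there, Lemma~\ref{some-bounds} applied pathwise, and uniform moments. There is one bookkeeping slip. You use the same letter $q$ both for the weight exponent and for the moment order of Assumption~\ref{assum:initial}. The weight should be $(1+|\cdot|)^{(d+1)/r}$ with your $r=\frac{2k}{k-2}$, i.e.\ Lemma~\ref{some-bounds} is applied with its $q$ equal to $r$. Then $\|(1+|\cdot|)^{-(d+1)/r}\|_{L^r}<\infty$ automatically. The moment needed has order $2\frac{d+1}{r}=(d+1)(1-\frac2k)$, which is exactly the quantity in Assumption~\ref{assum:initial}; use H\"older and the second moment when it is below $2$. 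Your claim ``$2\frac{d+1}{q}\le q$'' is false as written: for $d=3$, $k=4$ one gets $q=2$ but $2\frac{d+1}{q}=4$.

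\textbf{The gap, in the case $k<2$.} After the square-function inequality you have a quantity that is quadratic in $\hat P^*_{t-s}\nabla V^N(\cdot-y)$ and integrated against the raw empirical measure $\mu^N_s$. You propose to replace $\mu^N_s$ by its mollification or by $u_s$, with an additive remainder controlled by $\|u^N_s-u_s\|_{k^*}$, but you give no mechanism, and an additive comparison does not work. When $t-s\lesssim N^{-2\alpha}$, the map $y\mapsto|\hat P^*_{t-s}\nabla V^N(\cdot-y)(x)|^2$ varies at scale $N^{-\alpha}$. So trading $\mu^N_s$ for $V^N\ast\mu^N_s$ produces an error of the same size as the main term, not a small one. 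Likewise, integrals against $\mu^N_s-u_s$ are not controlled by $\|u^N_s-u_s\|_{k^*}$.

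\textbf{The missing idea.} What is needed is a pointwise domination under which $u^N_s$ appears with no remainder at all. Write $h_i=\hat P^*_{t-s}\nabla V^N(\cdot-X^{i,N}_s)$. First bound $|h_i|^2\le\|h_i\|_{L^\infty}|h_i|$, with $\|h_i\|_{L^\infty}\lesssim\min(N^{2\alpha},(t-s)^{-1})$. Next, integrate by parts in $|h_i|$ and use the gradient bound of Lemma~\ref{lem:OU-bounds}\ref{item:heat-bounds}. This gives $\sum_i|h_i|\le C(t-s)^{-1/2}N\,(G^{\kappa,F}_{t-s})^*u^N_s$, hence
\[
\E\|M_t^N\|_{k^*}^2\lesssim N^{-1}\int_0^t (t-s)^{-\frac12}\min\big(N^{2\alpha},(t-s)^{-1}\big)\,\E\|u^N_s\|_{L^{k^*/2}}\,\dd s .
\]
After this, only $\|u^N_s\|_{L^{k^*/2}}\le 1+\|u_s\|_{L^{k^*}}+\|u^N_s-u_s\|_{L^{k^*}}$ is needed. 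The weight integrates to $O(N^\alpha)$, also against $s^{-\gamma}$ for $\gamma<1$. So the error term enters with the factor $N^{-1+\alpha}$, not a power of $N^{-2\alpha}$.

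\textbf{Closing the estimate.} With this in hand, the step you regard as hardest is routine. Set $\mathcal S\coloneqq\sup_{s\in(0,1]}(s^{-\frac12-\frac{d}{2k}}+s^{-\frac dk})^{-1}\big\|\|u^N_s-u_s\|_{k^*}\big\|_{L^2(\PP)}$, which is finite for fixed $N$. Insert the bound on $M^N$ into Proposition~\ref{prop:mild-formulation} to get $\mathcal S\le C\big(\|\mathbb W_1(u_0^N,u_0)\|_{L^2(\PP)}+N^{-\alpha}+N^{-\frac12+\frac\alpha2}+N^{-\frac12+\frac\alpha2}\mathcal S^{1/2}\big)$. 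Then absorb the last term by Young's inequality. No Volterra--Gr\"onwall iteration is required.
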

\begin{proof}
Let us assume first that $k \in (2, \infty]$. Let $q$ be such that $\frac{1}{k^*}=\frac{1}{2}+\frac{1}{q}$. 
Applying Hölder's inequality with weight
$(1+|x|)^{-\frac{d+1}{q}}(1+|x|)^{\frac{d+1}{q}}$ and BDG's inequality in 
$L^{2}(\R^d)$, we obtain
\begin{align*}
\big\|\|M_t^N\|_{k^*}\big\|_{L^2(\PP)} &= \bigg\| 
    \Big\| \frac{\sqrt{2}}{N} \sum_{i=1}^N \int_0^t 
    \hat{P}^*_{t-s} \nabla V^N(\cdot - X_s^{i,N}) \cdot \dd W_s^i \Big\|_{L^{k^*}}
    \bigg\|_{L^2(\PP)}\\
    &\leq \frac{\sqrt{2}}{N} \big\| (1+|\cdot|)^{-\frac{d+1}{q}}\big\|_{L^{q}}
     \sup_{u \in [0,t]} 
    \bigg\|\Big\| \sum_{i=1}^N \int_0^u
    (1+|\cdot|)^{\frac{d+1}{q}} \, \hat{P}^*_{t-s} \nabla 
    V^N(\cdot - X_s^{i,N}) \cdot \dd W_s^i \Big\|_{L^{2}} \bigg\|_{L^2(\PP)} \\
    & \leq \frac{\sqrt{2}}{N} \sup_{u \in [0,t]} \bigg\| 
        \Big\| \Big( (1+|\cdot|)^{{2\frac{(d+1)}{q}}} 
        \sum_{i=1}^N \int_0^u 
        |\hat{P}^*_{t-s} \nabla V^N(\cdot - X_s^{i,N})|^2 \dd s
        \Big)^{\frac{1}{2}} \Big\|_{L^{2}} \bigg\|_{L^2(\PP)}.
    \end{align*}
Hence
\begin{align*}
        \left\|\|M_t^N\|_{k^*}\right\|_{L^2(\PP)} 
        & \leq  \frac{\sqrt{2}}{N} \sup_{u \in [0,t]} \bigg\|
            \Big\| \int_0^u \sum_{i=1}^N (1+|\cdot|)^{2\frac{d+1}{q}}
            \big|\hat{P}^*_{t-s} \big(\nabla V^N(\cdot - 
            X_s^{i,N})\big)\big|^2 \dd s\Big\|_{L^{1}}^{\frac{1}{2}} 
            \bigg\|_{L^2(\PP)}
            \\
        & = \frac{\sqrt{2}}{N} \sup_{u \in [0,t]} \left(\E \bigg\| \int_0^u 
            \sum_{i=1}^N (1+|\cdot|)^{2\frac{d+1}{q}}
            \big|\hat{P}^*_{t-s} \big(\nabla V^N(\cdot - 
            X_s^{i,N})\big)\big|^2 \dd s
            \bigg\|_{L^{1}}\right)^{\frac{1}{2}}
            \\
        & \leq C N^{-1} \bigg(\sum_{i=1}^N
            \E  \int_0^t\Big\| (1+|\cdot|)^{\frac{d+1}{q}}
            \hat{P}^*_{t-s} \big(\nabla V^N(\cdot - 
            X_s^{i,N})\big) \Big\|^2_{L^2} \dd s 
            \bigg)^{\frac{1}{2}}.
\end{align*}
Lemma~\ref{some-bounds}, applied pathwise with 
$(a_s)_{s \in [0,t]} = (X_s^{i,N})_{s \in [0,t]}$, yields
\begin{align*}
        \left\|\|M_t^N\|_{k^*}\right\|_{L^2(\PP)}
        &\leq C N^{-1}
        \Big(
        N^{d\alpha+\frac{\varepsilon}{2}}
        \sum_{i=1}^N
        \int_0^t
        (t-s)^{-1+\frac{\varepsilon}{4\alpha}}
        \E\left[
        (1+|X_s^{i,N}|)^{2\frac{d+1}{q}}
        \right]\dd s
        \Big)^{\frac{1}{2}}\\
        & \leq C N^{-\frac{1}{2}+
\frac{d \alpha}{2} + \varepsilon} \Big( \sup_{N\geq1}\max_{1\leq i\leq N}
    \sup_{s\geq0}
    \E\left[
        (1+|X_s^{i,N}|)^{2\frac{d+1}{q}}
    \right]\Big)^\frac12.
\end{align*}
Hence, if
\(2\frac{d+1}{q}\geq2,\)
Assumption~\ref{assum:initial} and
Lemma~\ref{lem:uniform_bounds} applied with the moment of order
$2\frac{d+1}{q}$ yield
\(\sup_{N\geq1}\max_{1\leq i\leq N}
    \sup_{s\geq0}
    \E\big[
        (1+|X_s^{i,N}|)^{2\frac{d+1}{q}}
    \big] \leq C.\)
If
\(2\frac{d+1}{q}<2,\)
then by H\"older's inequality,
\(\E\big[
        (1+|X_s^{i,N}|)^{2\frac{d+1}{q}}
    \big]
    \leq  \E\big[
        (1+|X_s^{i,N}|)^2
    \big]^{\frac{d+1}{q}},\)
and Assumption~\ref{assum:initial} with
Lemma~\ref{lem:uniform_bounds} applied with the moment of order $2$,
again gives a bound uniform in $N$, $i$, and $s$.
Therefore, in both cases,
\[f(Q) = \sup_{N\geq1}\max_{1\leq i\leq N}
    \sup_{s\geq0}
    \E\left[
        (1+|X_s^{i,N}|)^{2\frac{d+1}{q}}
    \right]
    \leq C.\]
Consequently, there exists a constant $C>0$ such that for any $t \in (0,1]$,
\[
\big\|\|M_t^N\|_{k^*}\big\|_{L^2(\PP)} \leq C N^{-\frac{1}{2}+
\frac{d \alpha}{2} + \varepsilon},
\]
where $C$ does not depend on $N$ and $t$.

Now assume that $k \in (1,2]$: this is possible only if $d=1$ and in this case the computations are similar but
 easier. Indeed, using BDG's inequality in $L^{k^*}(\R^d)$,
  we have 
 \begin{align*}
\big\|\|M_t^N\|_{k^*}\big\|_{L^2(\PP)} 
        &\leq C N^{-1} \sup_{u \in [0,t]}
         \bigg(\E \Big\| \int_0^u \sum_{i=1}^N
            |\hat{P}^*_{t-s} \nabla V^N(\cdot - 
            X_s^{i,N})|^2 \dd s
            \Big\|_{L^{k^*/2}}\bigg)^{\frac{1}{2}} \\
        &\leq C N^{-1} 
         \bigg(\E \Big\| \int_0^t \lVert \hat{P}^*_{t-s} \nabla V^N(\cdot - 
            X_s^{i,N}) \rVert_{L^\infty} \sum_{i=1}^N
            |\hat{P}^*_{t-s} \nabla V^N(\cdot - 
            X_s^{i,N})| \dd s
            \Big\|_{L^{k^*/2}}\bigg)^{\frac{1}{2}} .
\end{align*}
First, from Lemma~\ref{lem:OU-bounds}\ref{item:OU-Lp} with $p_{1}=p_{2}=\infty$, we get $\lVert \hat{P}^*_{t-s} \nabla V^N(\cdot - X_s^{i,N}) \rVert_{L^\infty} \leq C \lVert \nabla V^N(\cdot - X_s^{i,N}) \rVert_{L^\infty} \lesssim N^{2\alpha}$ (recall that $d=1$). Alternatively, using integration by parts and Lemma~\ref{lem:OU-bounds}\ref{item:OU-Lp} again, but with $p_{1}=1, p_{2}=\infty$, we get $\lVert \hat{P}^*_{t-s} \nabla V^N(\cdot - X_s^{i,N}) \rVert_{L^\infty} \leq C (t-s)^{-1} \lVert V^N\rVert_{L^1} = C (t-s)^{-1}$. Thus
\begin{equation*}
\big\|\|M_t^N\|_{k^*}\big\|_{L^2(\PP)}  \leq C N^{-1}  \bigg(\E \Big\| \int_0^t \min\big(N^{2\alpha}, (t-s)^{-1} \big) \sum_{i=1}^N
            |\hat{P}^*_{t-s} \nabla V^N(\cdot - 
            X_s^{i,N})| \dd s
            \Big\|_{L^{k^*/2}}\bigg)^{\frac{1}{2}} .
\end{equation*}
Now for the term $|\hat{P}^*_{t-s} \nabla V^N(\cdot - X_s^{i,N})|$, use integration by parts and Lemma~\ref{lem:OU-bounds}\ref{item:heat-bounds} to get that
\begin{align*}
\sum_{i=1}^N  |\hat{P}^*_{t-s} \nabla V^N(\cdot - X_s^{i,N})(x)| 
&\leq C (t-s)^{-\frac{1}{2}} \sum_{i=1}^N (G^{\kappa,F}_{t-s})^* \big[V^N(\cdot - X_s^{i,N})\big](x) \\
&= C (t-s)^{-\frac{1}{2}}\, N\, (G^{\kappa,F}_{t-s})^*u^N_{s}(x) .
\end{align*}
for some $\kappa>0$. It follows that
\begin{align*}
\big\|\|M_t^N\|_{k^*}\big\|_{L^2(\PP)}  
&\leq \frac{C}{\sqrt{N}}  \bigg(\E \Big\| \int_0^t (t-s)^{-\frac{1}{2}} \min\big(N^{2\alpha}, (t-s)^{-1} \big) (G^{\kappa,F}_{t-s})^*u^N_{s}\, \dd s
            \Big\|_{L^{k^*/2}}\bigg)^{\frac{1}{2}} \\
&\leq \frac{C}{\sqrt{N}}  \bigg(\int_0^t (t-s)^{-\frac{1}{2}} \min\big(N^{2\alpha}, (t-s)^{-1} \big) \E \| u^N_{s}\|_{L^{k^*/2}}\, \dd s\bigg)^{\frac{1}{2}}  ,
\end{align*}
using Minkowski's inequality and the fact that $ \lVert (G^{\kappa,F}_{t-s})^* \rVert_{L^{k^*/2} \to L^{k^*/2}} \leq C$ (see Lemma~\ref{lem:OU-bounds}\ref{item:sgG-Lp}). Now by interpolation, $\| u^N_{s}\|_{L^{k^*/2}} \leq 1 + \| u^N_{s}\|_{L^{k^*}}$ and $\| u^N_{s}\|_{L^{k^*}} \leq \| u_{s}\|_{L^{k^*}} + \| u^N_{s}-u_{s}\|_{L^{k^*}}$. Using~\eqref{ut-time-reg}, we have obtained that 
\begin{equation*}
\| u^N_{s}\|_{L^{k^*/2}} \leq 1 + C s^{-\frac{1}{2k}} + \| u^N_{s}-u_{s}\|_{L^{k^*}}, 
\end{equation*}
and therefore
\begin{align*}
\big\|\|M_t^N\|_{k^*}\big\|_{L^2(\PP)}  
\leq \frac{C}{\sqrt{N}}  \bigg(\int_0^t (t-s)^{-\frac{1}{2}} \min\big(N^{2\alpha}, (t-s)^{-1} \big) \big(1+ s^{-\frac{1}{2k}} + \E \| u^N_{s}-u_{s}\|_{L^{k^*}}\big)\, \dd s\bigg)^{\frac{1}{2}} .
\end{align*}
Use that 
\begin{align*}
\int_0^t (t-s)^{-\frac{1}{2}} \min\big(N^{2\alpha}, (t-s)^{-1} \big) \, \dd s \leq 3N^\alpha 
\end{align*}
and that for any $\gamma\in (0,1]$,
\begin{align}
\label{eq:weights}
\int_0^t (t-s)^{-\frac{1}{2}} \min\big(N^{2\alpha}, (t-s)^{-1} \big)\, s^{-\gamma} \, \dd s \leq C N^\alpha t^{-\gamma},
\end{align}
where the second inequality is obtained by cutting the integral in half at $s=t/2$, we deduce that for $\gamma = \frac{1}{2k}$, 
\begin{align*}
\big\|\|M_t^N\|_{k^*}\big\|_{L^2(\PP)}  
\leq C N^{-\frac{1}{2}+\frac{\alpha}{2}} \bigg(t^{-\frac{1}{4k}} + \bigg(N^{-\alpha} \int_0^t (t-s)^{-\frac{1}{2}} \min\big(N^{2\alpha}, (t-s)^{-1} \big)  \E \| u^N_{s}-u_{s}\|_{L^{k^*}}\, \dd s\bigg)^{\frac{1}{2}} \bigg).
\end{align*}
We may now use Proposition~\ref{prop:mild-formulation} to conclude as follows. Denote $\mathcal{S} \coloneqq \sup_{s\in (0,1]} \big(s^{-\frac12-\frac{1}{2k}}+s^{-\frac{1}{k}}\big)^{-1} \big\lVert \| u^N_{s}-u_{s}\|_{L^{k^*}} \big\rVert_{L^2(\PP)}$ and observe that for $k<2$ and $t\leq 1$,
\begin{align*}
N^{-\alpha} &\int_0^t (t-s)^{-\frac{1}{2}} \min\big(N^{2\alpha}, (t-s)^{-1} \big)  \E \| u^N_{s}-u_{s}\|_{L^{k^*}}\, \dd s \\
&\leq N^{-\alpha} \mathcal{S} \int_0^t (t-s)^{-\frac{1}{2}} \min\big(N^{2\alpha}, (t-s)^{-1} \big) s^{-\frac{1}{2} - \frac{1}{2k}}\, \dd s \\
&\leq C \big(t^{-\frac12-\frac{d}{2k}}+t^{-\frac{d}{k}}\big) \mathcal{S} ,
\end{align*}
using \eqref{eq:weights} with $\gamma = 1/2+1/(2k)$ in the last inequality, so that
\begin{align*}
\big\|\|M_t^N\|_{k^*}\big\|_{L^2(\PP)}  
\leq C N^{-\frac{1}{2}+\frac{\alpha}{2}} \sqrt{ t^{-\frac12-\frac{d}{2k}}+t^{-\frac{d}{k}}} \big(1 +\mathcal{S}^\frac{1}{2}) .
\end{align*}
 In view of the above and Proposition~\ref{prop:mild-formulation}, it thus comes
\begin{align*}
\big\lVert \| u^N_{t}-u_{t}\|_{L^{k^*}} \big\rVert_{L^2(\PP)} 
&\leq C (t^{-\frac{1}{2}- \frac{d}{2k}} +t^{-\frac{d}{k}})
    \left( \big\lVert\mathbb{W}_1(u_0^N,u_0) \big\rVert_{L^2(\PP)} + N^{-\alpha} \right)
    +\big\lVert \|M_t^N\|_{k^*} \big\rVert_{L^2(\PP)} \\
  &\quad  + C \int_0^t (t-s)^{-\frac{1}{2}-\frac{d}{2k}} \|M_s^N\|_{k^*} \dd s \\
 &\leq C (t^{-\frac{1}{2}- \frac{d}{2k}} +t^{-\frac{d}{k}})
    \left( \big\lVert\mathbb{W}_1(u_0^N,u_0) \big\rVert_{L^2(\PP)} + N^{-\alpha} + N^{-\frac{1}{2}+\frac{\alpha}{2}} +N^{-\frac{1}{2}+\frac{\alpha}{2}} \mathcal{S}^\frac{1}{2} \right), 
\end{align*}
and thus
\begin{align*}
\mathcal{S}
 &\leq C \left( \big\lVert\mathbb{W}_1(u_0^N,u_0) \big\rVert_{L^2(\PP)} + N^{-\alpha} + N^{-\frac{1}{2}+\frac{\alpha}{2}} +N^{-\frac{1}{2}+\frac{\alpha}{2}} \mathcal{S}^\frac{1}{2} \right)\\
 &\leq C \left( \big\lVert\mathbb{W}_1(u_0^N,u_0) \big\rVert_{L^2(\PP)} + N^{-\alpha} + N^{-\frac{1}{2}+\frac{\alpha}{2}} +\frac{1}{2} N^{-1+\alpha} \right)   + \frac{1}{2} \mathcal{S},
\end{align*}
from which the result follows.
\end{proof}

\section{Propagation of chaos for mollified particles: proof of Theorem~\ref{thm:main}}\label{sec:main-res}

We are now ready to prove Theorem~\ref{thm:main}. The scheme of proof consists in deriving a bound on $ \frac{\dd}{\dd t} |X_t^{i,N} - X_t^i|^2$, where $X^i$ is the McKean-Vlasov particle with the same noise as $X^{i,N}$, for $t\in (0,1]$, with constants that do no depend on the initial conditions of the system. Along the way, the Lipschitz bound on $K\ast u_{t}$ from Proposition~\ref{prop:lip-bound} will be used, then Grönwall's inequality on the sum over all particles will give an upper bound on $\frac{1}{N}\sum_{i=1}^N |X_t^{i,N}-X_t^i|^2$ that depend on $\frac{1}{N}\sum_{i=1}^N |X_0^{i,N}-X_0^i|^2$ and on $\|u_t^N - u_t\|_{k^*}$. Proposition~\ref{prop:mild-formulation}, Lemma~\ref{W1-bound} and Proposition~\ref{lemma:stochastic-integral} then permit to obtain an explicit estimate for $t\in (0,1]$. Leveraging precisely the contraction effect of $F$ in the previous step then allows to iterate this scheme on all subsequent time intervals of length $1$ thanks to the Markov property of the whole system.

\begin{proof}[Proof of Theorem \ref{thm:main}]

Consider $(X_t^i)_{i \in [1,N]}$, $N$ independent strong solutions of the 
 McKean-Vlasov
 SDE  \eqref{mckean-vlasov} driven by $N$ independent Brownian motions $(W_t^i)_{i \in [1,N]}$ 
 and with initial condition $(X_0^{i})_{i \in [1,N]}$.
Consider the system of $N$ interacting particles $(X_t^{i,N})_{i \in [1,N]}$ defined 
as strong solutions of
 \eqref{particle-system-intro}, 
driven by the same Brownian motions $(W_t^i)_{i \in [1,N]}$, and
having initial condition $(X_0^{i,N})_{i \in [1,N]}$ such that
 \((X_0^{1,N},\dots, X_0^{N,N}, X_0^1, \dots, X_0^N)\) realize the optimal coupling of 
 $\mathbb{W}_2^N(\mathcal{L}_{X_0^N}, u_0^{\otimes N}).$

Recall from Proposition~\ref{prop:lip-bound} that $K \ast u_t$ is Lipschitz with constant $C_{1} \|K\|_{L^k} t^{-\frac12-\frac{d}{2k}}$, for some $C_{1} = C_{0}(1+ \lVert K\rVert_{L^k}^2)$, see Remark~\ref{rem:track-K-2}.
Hence using Itô's formula, it comes that for any $i \in [1,N]$,
 \begin{align} 
 \nonumber
 \frac{\dd}{\dd t} |X_t^{i,N} - X_t^i|^2 &= -2 \big(F(X_t^{i,N}) + K\ast u_t^N(X_t^{i,N})
  - F(X_t^i) - K \ast u_t(X_t^i)\big) \cdot (X_t^{i,N} - X_t^i) \\ \nonumber
 & \leq - 2 \lambda |X_t^{i,N} - X_t^i|^2 -
 2 \big(K\ast u_t^N(X_t^{i,N}) - K \ast u_t(X_t^i)\big) 
 \cdot (X_t^{i,N} - X_t^i) \\ \label{ito-coupling}
 & \leq 2(C_{1} \|K\|_{L^k} t^{-\frac12-\frac{d}{2k}} -  \lambda) |X_t^{i,N} -
  X_t^i|^2 +
 2 \|K\|_{L^k} \|u_t^N - u_t\|_{k^*} |X_t^{i,N} - X_t^i|.
 \end{align}
Averaging \eqref{ito-coupling} over $i\in \{1,\dots,N\}$, we obtain
\begin{align*}
\frac{\dd}{\dd t} \Big(\frac{1}{N}\sum_{i=1}^N |X_t^{i,N}-X_t^i|^2\Big)
&\leq
2\big(C_1\|K\|_{L^k}t^{-\frac12-\frac{d}{2k}}
-\lambda\big) \Big(\frac{1}{N}\sum_{i=1}^N |X_t^{i,N}-X_t^i|^2\Big)\\
&\quad
+2\|K\|_{L^k}\|u_t^N-u_t\|_{k^*}
\Big(\frac1N\sum_{i=1}^N |X_t^{i,N}-X_t^i|\Big).
\end{align*}
Therefore, by the Cauchy-Schwarz inequality,
\begin{align}
\nonumber
\frac{\dd}{\dd t} \Big(\frac{1}{N}\sum_{i=1}^N |X_t^{i,N}-X_t^i|^2\Big)
&\leq 2\big(C_1\|K\|_{L^k}t^{-\frac12-\frac{d}{2k}}
-\lambda\big) \Big(\frac{1}{N}\sum_{i=1}^N |X_t^{i,N}-X_t^i|^2\Big)\,\\
&\quad
+2\|K\|_{L^k}\|u_t^N-u_t\|_{k^*}
\Big(\frac1N\sum_{i=1}^N |X_t^{i,N}-X_t^i|^2\Big)^{1/2}.
\label{eq:S-coupling}
\end{align}
Now, for $\delta>0$, set
\[g(t)\coloneqq\sqrt{\frac{1}{N}\sum_{i=1}^{N} |X_t^{i,N}-X_t^i|^2+\delta}.\]
Differentiating $g$ and using~\eqref{eq:S-coupling}, it comes
\begin{align*}
&\frac{\dd}{\dd t} g(t)
=
\frac{1}{2g(t)}\,\frac{\dd}{\dd t} \Big(\frac{1}{N}\sum_{i=1}^N |X_t^{i,N}-X_t^i|^2\Big)\\
&\leq\big(
C_1\|K\|_{L^k}t^{-\frac12-\frac{d}{2k}}
-\lambda\big)
\frac{\frac{1}{N}\sum_{i=1}^{N} |X_t^{i,N}-X_t^i|^2}{g(t)}\,
+\|K\|_{L^k}\|u_t^N-u_t\|_{k^*}
\frac{\big(\frac1N\sum_{i=1}^N |X_t^{i,N}-X_t^i|^2
\big)^{1/2}}{g(t)}\,\\
&\leq \big(C_1\|K\|_{L^k}t^{-\frac12-\frac{d}{2k}}
-\lambda\big)g(t) 
- \big(C_1\|K\|_{L^k}t^{-\frac12-\frac{d}{2k}}
-\lambda\big) \frac{\delta}{g(t)} +\|K\|_{L^k}
\|u_t^N-u_t\|_{k^*}\\
&\leq \big(C_1\|K\|_{L^k}t^{-\frac12-\frac{d}{2k}}
-\lambda\big) g(t)
+\|K\|_{L^k}\|u_t^N-u_t\|_{k^*} + \lambda\sqrt{\delta} .
\end{align*}
Using Gr\"onwall's lemma, it follows that for any $t\in(0,1]$,
\begin{align*}
g(t)&\leq g(0)
e^{\int_0^t
(C_1\|K\|_{L^k}s^{-\frac12-\frac{d}{2k}}
-\lambda)\dd s}
+\int_0^t e^{\int_s^t (C_1\|K\|_{L^k}u^{-\frac12-\frac{d}{2k}}
-\lambda)\dd u}
\big(\|K\|_{L^k}\|u_s^N-u_s\|_{k^*}
+\lambda\sqrt{\delta}\big)\, \dd s.
\end{align*}
Since $\big(\frac{1}{N}\sum_{i=1}^{N} |X_t^{i,N}-X_t^i|^2\big)^{1/2} \leq g(t)$ and
letting $\delta\to0$, we obtain for any $t\in(0,1]$,
\begin{align}
\Big(\frac{1}{N}\sum_{i=1}^{N} |X_t^{i,N}-X_t^i|^2\Big)^{1/2} 
&\leq
\Big(\frac{1}{N}\sum_{i=1}^{N} |X_0^{i,N}-X_0^i|^2\Big)^{1/2} 
e^{\int_0^t
(C_1\|K\|_{L^k}s^{-\frac12-\frac{d}{2k}}
-\lambda)\dd s}
\nonumber\\
&\quad +\|K\|_{L^k}
\int_0^t
e^{\int_s^t (C_1\|K\|_{L^k}u^{-\frac12-\frac{d}{2k}}-\lambda) \dd u}
\|u_s^N-u_s\|_{k^*}\,\dd s.
\label{eq:pathwise-coupling}
\end{align}
Taking the $L^2(\PP)$ norm in~\eqref{eq:pathwise-coupling}
and using Minkowski's integral inequality yields
\begin{align}\nonumber
\E\Big[ \Big(\frac{1}{N}\sum_{i=1}^{N} |X_t^{i,N}-X_t^i|^2\Big)\Big]^{1/2}
&\leq
\E\Big[ \Big(\frac{1}{N}\sum_{i=1}^{N} |X_0^{i,N}-X_0^i|^2 \Big)\Big]^{1/2}
\, e^{\int_0^t (C_1\|K\|_{L^k}s^{-\frac12-\frac{d}{2k}}
-\lambda)\dd s}\\\label{eq:W2-coupling-1}
&\quad+\|K\|_{L^k}
\int_0^te^{\int_s^t (C_1\|K\|_{L^k}u^{-\frac12-\frac{d}{2k}}-\lambda) \dd u}
\left\| \|u_s^N-u_s\|_{k^*}
\right\|_{L^2(\PP)}\,\dd s.
\end{align}
By definition of the coupling, 
\( \mathbb{W}_2^N(\mathcal{L}_{X_t^N},u_t^{\otimes N})
\leq
\E\big[\big(\frac{1}{N}\sum_{i=1}^{N} |X_t^{i,N}-X_t^i|^2\big)\big]^{1/2}\),
and the optimality of the initial coupling gives
\(\E\big[\big(\frac{1}{N}\sum_{i=1}^{N} |X_0^{i,N}-X_0^i|^2\big)\big]^{1/2}
=\mathbb{W}_2^N(\mathcal{L}_{X_0^N},u_0^{\otimes N}).\)
Hence for any $t\in(0,1]$,
\begin{align}
\mathbb{W}_2^N(\mathcal{L}_{X_t^N},u_t^{\otimes N})
&\leq \mathbb{W}_2^N(\mathcal{L}_{X_0^N},u_0^{\otimes N})\,
e^{\int_0^t (C_1\|K\|_{L^k}s^{-\frac12-\frac{d}{2k}}
-\lambda)\dd s}
\nonumber\\
&\quad +\|K\|_{L^k}
\int_0^t
e^{\int_s^t (C_1\|K\|_{L^k}u^{-\frac12-\frac{d}{2k}}
-\lambda)\dd u}
\left\|\|u_s^N-u_s\|_{k^*}
\right\|_{L^2(\PP)}\,\dd s.
\label{eq:W2-coupling}
\end{align}

Now in view of Proposition~\ref{prop:mild-formulation}, Lemma~\ref{W1-bound}
and Proposition~\ref{lemma:stochastic-integral}, we get that for any
$\varepsilon>0$ there exists $C_{2}>0$ such that for any $t\in(0,1]$,
\begin{align}
\left\|\|u_t^N-u_t\|_{k^*}\right\|_{L^2(\PP)}
&\leq C_{2}\big(t^{-\frac12-\frac{d}{2k}}
+t^{-\frac{d}{k}} \big)
\big(\big\|\mathbb{W}_1(u_0^N,u_0) \big\|_{L^2(\PP)}+N^{-\alpha}\big)
\nonumber\\
&\quad +C_{2} N^{-\min(\alpha, \frac{1}{2}-
\frac{d \alpha}{2}-\varepsilon)}
\Big(1+\mathbbm{1}_{k \in (1,2)}\sqrt{ t^{-\frac12-\frac{d}{2k}}+t^{-\frac{d}{k}}}\big\lVert\mathbb{W}_1(u_0^N,u_0) \big\rVert_{L^2(\PP)} \Big)
\nonumber\\
&\leq C_{2}\Theta(t)
\Big(\mathbb{W}_2^N(\mathcal{L}_{X_0^N},u_0^{\otimes N})
+N^{-\min\big(\alpha, \frac12-\frac{d\alpha}{2}-\varepsilon
\big)} +f(N)\Big),
\label{L2-diff}
\end{align}
where we recall that $f(N)$ was defined in~\eqref{eq:defFN} and 
\begin{equation}
\label{theta}
\Theta(t) \coloneqq t^{-\frac12-\frac{d}{2k}}+t^{-\frac{d}{k}}.
\end{equation}
Here and in this whole proof, Remark~\ref{rem:track-K-2} allows to write $C_{2}$ of the form $\exp(C_{0} \lVert K\rVert_{L^k})$, for some $C_{0}>0$ that does not depend on $K$ and whose value may change from line to line. 
Besides, observing that for any admissible $\alpha$,
\[f(N)\leq C
N^{-\min\big(\alpha,
    \frac12-\frac{d\alpha}{2}-\varepsilon\big)},\]
we rewrite \eqref{L2-diff} as
\begin{align}
\label{L2-diff-2}
\left\|\|u_t^N-u_t\|_{k^*}
\right\|_{L^2(\PP)}
\leq C_{2}\Theta(t) \Big(
\mathbb{W}_2^N(\mathcal{L}_{X_0^N},u_0^{\otimes N})
+N^{-\min\big( \alpha,
    \frac12-\frac{d\alpha}{2}-\varepsilon\big)}\Big).
\end{align}
Plugging~\eqref{L2-diff-2} into~\eqref{eq:W2-coupling}, we obtain that for any $t\in(0,1]$,
\begin{align}
\nonumber
&\mathbb{W}_2^N(\mathcal{L}_{X_t^N},u_t^{\otimes N})
\leq \mathbb{W}_2^N(\mathcal{L}_{X_0^N},u_0^{\otimes N})\, e^{\int_0^t (C_{1}\|K\|_{L^k}s^{-\frac12-\frac{d}{2k}}-\lambda)\dd s}
\\ \nonumber
&\quad +C_{2}\|K\|_{L^k}\int_0^t e^{\int_s^t (C_{1}\|K\|_{L^k} u^{-\frac12-\frac{d}{2k}}-\lambda
)\dd u}\,
\Theta(s)\Big(\mathbb{W}_2^N(\mathcal{L}_{X_0^N},
u_0^{\otimes N})+N^{-\min\big(\alpha,
    \frac12-\frac{d\alpha}{2}-\varepsilon\big)}
\Big)\,\dd s\\ \label{eq:gronwall-2}
& \leq \mathbb{W}_2^N(\mathcal{L}_{X_0^N},u_0^{\otimes N}) \Big(e^{\int_0^t
(C_{1}\|K\|_{L^k}s^{-\frac12-\frac{d}{2k}}-\lambda)\dd s} +C_{2}\|K\|_{L^k} e^{C_{1}\|K\|_{L^k}}\Big)
 + C_{2} N^{-\min\big(\alpha,
    \frac12-\frac{d\alpha}{2}-\varepsilon\big)}.
\end{align}    
Recalling that $C_{1}$ is of the form $C_{0}(1 + \lVert K\rVert_{L^k}^2)$ and $C_{2}$ of the form $\exp(C_{0} \lVert K\rVert_{L^k})$ for some $C_{0}>0$, for any $\lambda>0$, one can choose $\|K\|_{L^k}$ small enough so that
\[
\mathcal{A} \coloneqq e^{\int_0^1
(C_{1}\|K\|_{L^k}s^{-\frac12-\frac{d}{2k}}-\lambda)\dd s}+C_{2}\|K\|_{L^k} e^{C_{1}\|K\|_{L^k}}<1.
\]

To conclude, denote by $P^N_t$ the semigroup associated with the particle
system~\eqref{particle-system-intro}. Fix $n\in\mathbb{N}$, and define the
shifted flow
\( u^{(n)}_s \coloneqq  u_{n+s}, s\geq 0.\)
By time-homogeneity and uniqueness, $(u^{(n)}_s)_{s\geq 0}$ is the unique
solution of the McKean-Vlasov Equation~\eqref{mckean-vlasov} with initial
condition $u_n$. In particular,
\( u_{n+1}
    = P^{u^{(n)},*}_{0,1} u_n.\)
Hence, Inequality~\eqref{eq:gronwall-2} can be iterated to obtain
\begin{align*}
    \mathbb{W}^N_2\Big(
        P^{N,*}_{n+1}\mathcal{L}_{X_0^N},
        \big(P^{u^{(0)},*}_{0,n+1}u_0\big)^{\otimes N}
    \Big)
    &=\mathbb{W}^N_2\Big(
        P^{N,*}_{1}\mathcal{L}_{X_n^N},
        \big(P^{u^{(n)},*}_{0,1}u_n\big)^{\otimes N}
    \Big)
    \\
    &\leq
    \mathcal{A}\,\mathbb{W}^N_2\big(
        \mathcal{L}_{X_n^N},
        u_n^{\otimes N}\big)
    + CN^{-\min\left(\alpha,
    \frac12-\frac{d\alpha}{2}-\varepsilon\right)}
    \\
    &\leq
    \mathcal{A}^{n+1}
    \mathbb{W}^N_2\big(
 \mathcal{L}_{X_0^N},
        u_0^{\otimes N}\big)
    + CN^{-\min\left(
    \alpha,
    \frac12-\frac{d\alpha}{2}-\varepsilon
\right)}
    \sum_{m=0}^{n} \mathcal{A}^m.
\end{align*}
Since $\mathcal{A}<1$, it follows that
\[\mathbb{W}^N_2
\bigl(\mathcal{L}_{X_n^N},u_n^{\otimes N}\bigr)
\leq
\mathcal{A}^n
\mathbb{W}^N_2
\bigl(\mathcal{L}_{X_0^N},u_0^{\otimes N}\bigr)
+\frac{C}{1-\mathcal{A}}N^{-\min\left(
    \alpha,
    \frac12-\frac{d\alpha}{2}-\varepsilon
\right)},
\quad n\in\mathbb N.\]
Now relying on the previous bound and \eqref{eq:gronwall-2},  it is clear that 
there exist constants
$C,c>0$ such that for any $t>0$,
\begin{equation}\label{eq:first-estim}
\mathbb{W}^N_2
\bigl(\mathcal{L}_{X_t^N},u_t^{\otimes N}\bigr)
\leq C\left(
e^{-ct}
\mathbb{W}^N_2
\bigl(\mathcal{L}_{X_0^N},u_0^{\otimes N}\bigr)
+CN^{-\min\left(
    \alpha,
    \frac12-\frac{d\alpha}{2}-\varepsilon
\right)}
\right).
\end{equation}

\medskip

It remains to prove the estimate on the mollified empirical density.
For $t\in(0,1]$, since $k>d$ we have
\begin{equation*}
    \Theta(t)
    = t^{-\frac12-\frac{d}{2k}}+t^{-\frac{d}{k}}
    \leq
    C t^{-\frac12-\frac{d}{2k}}.
\end{equation*}
Hence, by \eqref{L2-diff-2},
\begin{align*}
    &
    \left\|\|u_t^N-u_t\|_{k^*}\right\|_{L^2(\PP)}\leq C (1\wedge t)^{-\frac12-\frac{d}{2k}}
    \left(
    \mathbb W_2^N\left(\mathcal L_{X_0^N},u_0^{\otimes N}\right)
    +
    N^{-\min\left(\alpha,\frac12-\frac{d\alpha}{2}
    -\varepsilon\right)}
    \right).
\end{align*}
For $t>1$, we apply \eqref{L2-diff-2} on the time interval
$[t-1,t]$ and obtain
\begin{align*}
    \left\|\|u_t^N-u_t\|_{k^*}\right\|_{L^2(\PP)}
    &\leq
    C\left(
    \mathbb W_2^N\left(\mathcal L_{X_{t-1}^N},
    u_{t-1}^{\otimes N}\right)
    +
    N^{-\min\left(\alpha,\frac12-\frac{d\alpha}{2}
    -\varepsilon\right)}
    \right)
    \\
    &\le
    C\left(
    e^{-c(t-1)}
    \mathbb W_2^N\left(\mathcal L(X_0^N),u_0^{\otimes N}\right)
    +
    N^{-\min\left(\alpha,\frac12-\frac{d\alpha}{2}
    -\varepsilon\right)}
    \right),
\end{align*}
where we used \eqref{eq:first-estim} in the last inequality.
Combining the two cases gives, for any $t>0$,
\begin{align*}
    \left\|\|u_t^N-u_t\|_{k^*}\right\|_{L^2(\PP)}
    \leq C(1\wedge t)^{-\frac12-\frac{d}{2k}}
    \left( e^{-ct}
    \mathbb W_2^N\left(\mathcal L(X_0^N),u_0^{\otimes N}\right)
    +
    N^{-\min\left(\alpha,\frac12-\frac{d\alpha}{2}
    -\varepsilon\right)}
    \right).
\end{align*}
This concludes the proof.
\end{proof}

\section{Propagation of chaos for non-mollified particles: Proof of Theorem~\ref{thm:main-non-mollified}}\label{sec:non-mollified}

Let $(F,K)$ satisfying Assumption~\ref{assum:potentials}, with the 
additional assumption that $k>d\vee2$. Without loss of generality, 
we also set \(K(0) = 0\). We consider the following non-mollified system of particles~\eqref{particle-system-intro-gen}:
\begin{equation}\label{non-mollified-particle-system}
    \dd Y_t^{i,N}
    =
    -F(Y_t^{i,N}) \dd t
    -
    \frac{1}{N}\sum_{j=1}^N K(Y_t^{i,N}-Y_t^{j,N}) \dd t
    +
    \sqrt{2} \dd W_t^i,
    \qquad i=1,\ldots,N, 
\end{equation}
Also assume that the initial conditions $Y_0^N$ satisfy
Assumption~\ref{assum:initial}, with $X_0^N$ replaced by $Y_0^N$.
For each fixed $N$, weak existence for
\eqref{non-mollified-particle-system} follows from
Lemma~\ref{lem:martingale-bounds} by taking $r=N$. Indeed, in this
case the auxiliary system consists of $N$ independent generalized
Ornstein-Uhlenbeck processes, and the Girsanov change of measure
introduced in Appendix~\ref{annex:part-girsanov} recovers
\eqref{non-mollified-particle-system}.
Patwhise uniqueness can then be obtained from \cite{HaoRocknerZhang}, using 
a localization in time argument. 
Hence, strong well-posedness holds for \eqref{non-mollified-particle-system}.
Note that in fact weak well-posedness is already sufficient for the coupling 
argument below, since, after enlarging the probability space
if necessary, we may realize an optimal coupling of the initial conditions and then construct,
 on the same space and driven by the same Brownian 
 motions $(W^i)_{i=1,\ldots,N}$, the $N$ independent strong 
 solutions of the McKean-Vlasov equation.
 Let us re-examine the argument from Section~\ref{sec:main-res},
 with the non-mollified system of
particles~\eqref{non-mollified-particle-system} instead of the mollified one~
\eqref{particle-system-intro}. A synchronous coupling argument, It\^o's formula
and Proposition~\ref{prop:lip-bound}
allows to re-write~\eqref{ito-coupling} and obtain for any $t\in(0,1]$,
\begin{align*}
    \frac{\dd}{\dd t}|Y_t^{i,N}-X_t^i|^2
    &= -2\Big( F(Y_t^{i,N})
        +K*\mu_t^N(Y_t^{i,N})
        -F(X_t^i) -K*u_t(X_t^i)
      \Big)\cdot(Y_t^{i,N}-X_t^i)
    \\
    &\leq
    -2\lambda |Y_t^{i,N}-X_t^i|^2-
    2\Big( K*\mu_t^N(Y_t^{i,N})
        -K*u_t(X_t^i)
      \Big)\cdot(Y_t^{i,N}-X_t^i)
    \\
    &\leq 2\left( C\|K\|_{L^k}t^{-\frac12-\frac{d}{2k}}-\lambda\right)
      |Y_t^{i,N}-X_t^i|^2
    \\& \quad + 2
    \big|K*(\mu_t^N-u_t)(Y_t^{i,N})\big|\,
    |Y_t^{i,N}-X_t^i|,
\end{align*}
where we recall that $\mu_t^N$ is the empirical 
measure of the particle system \eqref{non-mollified-particle-system} at time $t$,
and that $(X_t^i)_{i=1,\ldots,N}$ are $N$ independent strong solutions 
of the McKean-Vlasov SDE~\eqref{mckean-vlasov} driven 
by $(W_t^i)_{i=1,\ldots,N}$.
Averaging over $i\in\{1,...,N\}$ and using the Cauchy-Schwarz inequality, we obtain
\begin{align*}
    \frac{\dd}{\dd t}
    \Big(
        \frac1N\sum_{i=1}^N|Y_t^{i,N}-X_t^i|^2
    \Big)
    &\leq
    2\Big(
        C\|K\|_{L^k}t^{-\frac12-\frac{d}{2k}}
        -\lambda
      \Big)
      \frac1N\sum_{i=1}^N|Y_t^{i,N}-X_t^i|^2
    \\
    &\quad  + 2
    \Big(\frac1N\sum_{i=1}^N
        |K*(\mu_t^N-u_t)(Y_t^{i,N})|^2
    \Big)^{1/2}
    \Big( \frac1N\sum_{i=1}^N|Y_t^{i,N}-X_t^i|^2
    \Big)^{1/2}.
\end{align*}
Using the same argument as in Section~\ref{sec:main-res}, meaning that we use an auxiliary function
\(g(t) = \Big(\frac1N\sum_{i=1}^N|Y_t^{i,N}-X_t^i|^2+\varepsilon
    \Big)^{1/2},\)
then make $\varepsilon\to0$, we obtain
\begin{align*}
    \frac{\dd}{\dd t}
    \Big(
        \frac1N\sum_{i=1}^N|Y_t^{i,N}-X_t^i|^2
    \Big)^{1/2}
    &\leq
    \Big(
        C\|K\|_{L^k}t^{-\frac12-\frac{d}{2k}}
        -\lambda
    \Big)
    \Big(
        \frac1N\sum_{i=1}^N|Y_t^{i,N}-X_t^i|^2
    \Big)^{1/2}
    \\ &+
    \Big(\frac1N\sum_{i=1}^N |K*(\mu_t^N-u_t)(Y_t^{i,N})|^2
    \Big)^{1/2}.
\end{align*}
Using Gr\"onwall's lemma and taking the $L^2(\PP)$ norm, we obtain
for any $t\in(0,1]$,
\begin{align*}
    W_2^N(\mathcal L Y_t^N,u_t^{\otimes N})
    &\leq
    e^{\int_0^t
    ( C\|K\|_{L^k}s^{-\frac12-\frac{d}{2k}} -\lambda)\dd s}
    W_2^N(\mathcal L Y_0^N,u_0^{\otimes N})
    \\&\quad +\int_0^t
    e^{\int_s^t
    ( C\|K\|_{L^k}u^{-\frac12-\frac{d}{2k}}
        -\lambda)\dd u}
    \Big( \mathbb E \Big[
            \frac1N\sum_{i=1}^N
            |K*(\mu_s^N-u_s)(Y_s^{i,N})|^2\Big]
    \Big)^{1/2}    \dd s,
\end{align*}
where we recall that we choose
 $(X_0^1,\ldots,X_0^N)$ and $(Y_0^{1,N},\ldots,Y_0^{N,N})$ to realize the optimal coupling of
\(\mathbb{W}_2^N(\mathcal{L}_{Y_0^N},u_0^{\otimes N})\).
Hence, in all the following, the goal is to control the term
$\big(\mathbb E \big[\frac1N\sum_{i=1}^N
            |K*(\mu_s^N-u_s)(Y_s^{i,N})|^2
        \big]\big)^{1/2}$.
The idea is to introduce the pivot term $K*v_s^N(Y_s^{i,N})$, where $v^N$ is the mollified empirical measure (still of the non-mollified system). 
Precisely, we have
\begin{align}\nonumber
    \Big(\mathbb E&
        \Big[ \frac1N\sum_{i=1}^N |K*(\mu_s^N-u_s)(Y_s^{i,N})|^2
        \Big] \Big)^{1/2}
    \\ \nonumber
    &\leq
    \Big(\mathbb E
        \Big[
            \frac1N\sum_{i=1}^N
            |K*(\mu_s^N-v_s^N)(Y_s^{i,N})|^2
        \Big]\Big)^{1/2}
    +
    \|K\|_{L^k}
    \big\|\|v_s^N-u_s\|_{k^*}\big\|_{L^2(\PP)}
    \\ \label{two-side-decomposition}
    &\leq
    \Big(\frac1{N^2}
        \sum_{i,j=1}^N
        \mathbb E
        \big[|(K-K*V^N)(Y_s^{i,N}-Y_s^{j,N})|^2 \big] \Big)^{1/2}
    + \|K\|_{L^k}
    \big\|\|v_s^N-u_s\|_{k^*}\big\|_{L^2(\PP)}.
\end{align}
On the one hand, we will adapt Section \ref{sec:micro-macro}
to control $\left\|\|v_s^N-u_s\|_{k^*}\right\|_{L^2(\PP)}$, and on the other hand we will derive heat-like uniform-in-$N$ estimates for the density of the difference of particles in order to bound
$ \mathbb E\big[|(K-K*V^N)(Y_s^{i,N}-Y_s^{j,N})|^2 \big]$. 
Let us first focus, in Step~1, on the first term
\(\mathbb E \big[  |(K-K*V^N)(Y_s^{i,N}-Y_s^{j,N})|^2\big]\)
and then in Step~2, we will focus on the second term
$\|K\|_{L^k} \left\|\|u_s^N-u_s\|_{k^*}\right\|_{L^2(\PP)}$.

\noindent\textbf{Step~1.}
For $i\neq j$ and $x\in(\mathbb R^d)^N$, denote by
$q_s^{N,i,j}(x,\cdot)$ the density of
$Y_s^{i,N}-Y_s^{j,N}$ when the particle system starts from
$Y_0^N=x$.
Then, for $i\neq j$, H\"older's inequality gives
\begin{align*}
    \mathbb E
    \left[
        |(K-K*V^N)(Y_s^{i,N}-Y_s^{j,N})|^2
    \right]
    &=
    \int_{(\mathbb R^d)^N}
    \int_{\mathbb R^d}
        |(K-K*V^N)(z)|^2
        q_s^{N,i,j}(x,z)\,\dd z\,\mathcal L(Y_0^N)(\dd x)
    \\
    &\leq
    \|K-K*V^N\|_{L^k}^2
    \sup_{x\in(\mathbb R^d)^N}
    \|q_s^{N,i,j}(x,\cdot)\|_{L^{\frac{k}{k-2}}}.
\end{align*}
The rest of Step~1 is therefore devoted to proving that, for
$k>(d \vee 2)$ and every $p\in[1,\infty)$, there exists a constant $C>0$
such that
\begin{equation}
\label{eq:heat-like-estimate}
    \sup_{N\in\N^*}
    \sup_{i\neq j}
    \sup_{x\in(\mathbb R^d)^N}
    \|q_s^{N,i,j}(x,\cdot)\|_{L^p}
    \leq
    C s^{-\frac d2(1-\frac1p)},
    \qquad s\in(0,1].
\end{equation}
By relabelling the coordinates
of the particles, and since the estimate we aim for
is uniform with respect to the initial configuration $x$, it is enough
to consider the pair $(i,j)=(1,2)$. 
In order to obtain~\eqref{eq:heat-like-estimate}, we use
Lemma~\ref{lem:martingale-bounds} with $r=2$, and use in the following
the notations introduced in Appendix~\ref{annex:part-girsanov}.
Fix \(x=(x_1,\ldots,x_N)\in(\mathbb R^d)^N,\)
and consider the auxiliary system
$\widehat Y^N$ defined in~\eqref{eq:auxiliary-particle-system} with
$r=2$ and $\widehat Y_0^N=x$. Under $\hat{\mathbb P}$,
$\widehat Y^{1,N}$ and $\widehat Y^{2,N}$ are independent generalized
Ornstein-Uhlenbeck processes. On the other hand, by
Lemma~\ref{lem:martingale-bounds}, under the probability measure
$\mathbb Q^{2,N}$ defined by
\[ \dd\mathbb Q^{2,N} =Z_1^{2,N}\,\dd \hat{\mathbb P},\]
the process $\widehat Y^N$ has the same law as the solution of~\eqref{non-mollified-particle-system} starting from $x$.
Since
\[ \left.
    \frac{\dd\mathbb Q^{2,N}}{\dd\mathbb P}
    \right|_{\mathcal F_t}
    =Z_t^{2,N},\]
H\"older's inequality, the independence of
$\widehat Y^{1,N}$ and $\widehat Y^{2,N}$ under $\hat{\mathbb P}$,
Lemma~\ref{lem:OU-bounds} and Lemma~\ref{lem:martingale-bounds} give, 
for any $p\in(1,\infty)$,
\begin{align*}
    \|q_t^{N,1,2}(x,\cdot)\|_{L^p}
    &= \sup_{\|\varphi\|_{L^{p^*}}\leq1}
    \left| \mathbb E_{\mathbb Q^{2,N}}
        \left[\varphi( \widehat Y_t^{1,N}-\widehat Y_t^{2,N} ) \right]
    \right|
    \\
    &= \sup_{\|\varphi\|_{L^{p^*}}\leq1}
    \left| \mathbb E_{\hat{\mathbb P}} 
    \left[Z_t^{2,N} \varphi( \widehat Y_t^{1,N}-\widehat Y_t^{2,N})
    \right]\right|
    \\
    &\leq
    \|Z_t^{2,N}\|_{L^p(\hat{\mathbb P})}
    \sup_{\|\varphi\|_{L^{p^*}}\leq1}
    \left[ \mathbb E_{\hat{\mathbb P}}
    \left|\varphi(\widehat Y_t^{1,N}-\widehat Y_t^{2,N})
    \right|^{p^*}\right]^{1/p^*}
    \\
    &=\|Z_t^{2,N}\|_{L^p(\hat{\mathbb P})}
    \sup_{\|\varphi\|_{L^{p^*}}\leq1}
    \left( \int_{(\mathbb R^d)^2}
    |\varphi(y_1-y_2)|^{p^*}
    \widehat p_t(x_1,y_1)
    \widehat p_t(x_2,y_2)
    \dd y_1\dd y_2 \right)^{1/p^*}
    \\
    &\leq
    C
    t^{-\frac{d}{2p^*}}
    \|Z_t^{2,N}\|_{L^p(\hat{\mathbb P})} \leq
    C t^{-\frac d2(1-\frac1p)} \sup_{N \geq 2}
    \sup_{x\in(\mathbb R^d)^N}
    \|Z_t^{2,N}\|_{L^p(\hat{\mathbb P})}
    \leq
    Ct^{-\frac d2(1-\frac1p)}.
\end{align*}
For $p=1$, the estimate follows directly from the fact that
$q_t^{N,1,2}(x,\cdot)$ is a probability density. We can hence conclude
that estimate~\eqref{eq:heat-like-estimate} holds.

Going back to the first term in \eqref{two-side-decomposition}, we
separate the diagonal and off-diagonal contributions. Using the estimate
\eqref{eq:heat-like-estimate}
proved above for $i\neq j$, H\"older's
inequality, and the scaling of $V^N$, we obtain
\begin{align}
\Big(
    \frac1{N^2}&
    \sum_{i,j=1}^N
    \E\left[
        |(K-K*V^N)(Y_s^{i,N}-Y_s^{j,N})|^2
    \right]
\Big)^{1/2}
\nonumber\\
&\leq
\Big(
    \frac1{N^2}
    \sum_{i\neq j}
    \E\left[
        |(K-K*V^N)(Y_s^{i,N}-Y_s^{j,N})|^2
    \right]
\Big)^{1/2}
\nonumber\\
&\quad+
\Big(
    \frac1{N^2}
    \sum_{i=1}^N
    |(K-K*V^N)(0)|^2
\Big)^{1/2}
\nonumber\\
&\leq
\|K-K*V^N\|_{L^k}
\Big(
    \frac1{N^2}
    \sum_{i\neq j}
    \sup_{x\in(\mathbb R^d)^N}
    \|q_s^{N,i,j}(x,\cdot)\|_{L^{\frac{k}{k-2}}}
\Big)^{1/2}
+\frac1{\sqrt N}|(K*V^N)(0)|
\nonumber\\
&\leq
C s^{-\frac d{2k}}
\|K-K*V^N\|_{L^k}
+\frac1{\sqrt N}
\|K\|_{L^k}
\|V^N\|_{L^{k^*}}
\nonumber\\
&\leq
C s^{-\frac d{2k}}
\|K-K*V^N\|_{L^k}
+C\|K\|_{L^k}
N^{-\frac12+\frac{d\alpha}{k}}.
\label{eq:first-term-final}
\end{align}

\noindent\textbf{Step~2.}
We now adapt Section~\ref{sec:micro-macro} in order to estimate the second term in~\eqref{two-side-decomposition} which is
$\left\|\|v_t^N-u_t\|_{k^*}\right\|_{L^2(\PP)}$.
The only difference with the proof of Proposition~\ref{prop:mild-formulation} is that the drift of the particles is now
$K*\mu_t^N$ instead of $K*v_t^N$. We write
\[ R_t^N\coloneqq  K*(\mu_t^N-v_t^N)=(K-K*V^N)*\mu_t^N.\]
Therefore, applying It\^o's formula exactly as in the proof of Proposition~\ref{prop:mild-formulation} gives the same mild
formulation, with one additional term, namely
\begin{align*}
u_t^N(x) &= \hat{P}_t^*u_0^N(x)
+ \int_0^t \hat{P}_{t-s}^*\nabla \cdot
 \left\langle \mu_s^N, V^N(x-\cdot)\, K\ast 
u_s^N(\cdot) \right\rangle \,\dd s  \\
&\quad
+ \int_0^t \hat{P}_{t-s}^*\nabla \cdot
\langle \mu_s^N, V^N(x-\cdot)\,\left(F(\cdot)-
F(x) \right) \rangle \,\dd s \\
&\quad
 + \mathcal R_t^N(x) + \widetilde{M}_{t}^N(x),
\end{align*}
where
\begin{equation*}
    \mathcal R_t^N(x)
    \coloneqq 
    \int_0^t
    \widehat P_{t-s}^*
    \nabla\cdot
    \left\langle
        \mu_s^N,
        V^N(x-\cdot)R_s^N(\cdot)
    \right\rangle \dd s,
\end{equation*}
and
\begin{equation*}
\widetilde{M}_{t}^N(x) = - \frac{\sqrt{2}}{N}\sum_{i=1}^N \int_0^t 
 \hat{P}_{t-s}^* \nabla
V^N(\cdot - Y_s^{i,N})(x) \cdot \dd W_s^i .
\end{equation*}
We will derive rates successively for $\big\|\|\mathcal R_t^N\|_{k^*}\big\|_{L^2(\PP)}$ and $\big\lVert \lVert \widetilde{M}_{t}^N \rVert_{k^*}\big\rVert_{L^2(\PP)}$.

Using the duality estimate introduced in Step~2 of the proof of 
Proposition~\ref{prop:mild-formulation}, namely that
\(\|\widehat P_{t-s}^*\nabla\cdot h\|_{k^*}
    \leq
C(t-s)^{-\frac12-\frac{d}{2k}}\|h\|_{L^1}\),
we have
\begin{align*}
    \|\mathcal R_t^N\|_{k^*}
    &\leq
    C \int_0^t
    (t-s)^{-\frac12-\frac{d}{2k}}
    \left\langle
        \mu_s^N,
        |R_s^N|
    \right\rangle \dd s
    \\
    &\leq
    C \int_0^t
    (t-s)^{-\frac12-\frac{d}{2k}}
    \frac1{N^2}
    \sum_{i,j=1}^N
    |(K-K\ast V^N)(Y_s^{i,N}-Y_s^{j,N})|\, \dd s.
\end{align*}
Hence, using Jensen's inequality and~\eqref{eq:heat-like-estimate}, and
separating the diagonal and off-diagonal contributions as in Step~1, we obtain that
for any $t\in(0,1]$,
\begin{align*}
\big\|\|\mathcal R_t^N\|_{k^*}\big\|_{L^2(\PP)}
&\leq
C
\int_0^t
(t-s)^{-\frac12-\frac{d}{2k}}
\Big(\frac1{N^2}
 \sum_{i,j=1}^N
 \E\left[
  |(K-K*V^N)(Y_s^{i,N}-Y_s^{j,N})|^2 \right]\Big)^{1/2}
\,\dd s
\\
&\leq
C\|K-K*V^N\|_{L^k}
\int_0^t
(t-s)^{-\frac12-\frac{d}{2k}}
s^{-\frac{d}{2k}}
\,\dd s
 +C\|K\|_{L^k}
N^{-\frac12+\frac{d\alpha}{k}}
\int_0^t
(t-s)^{-\frac12-\frac{d}{2k}}
\,\dd s
\\
&\leq C \big( t^{\frac{1}{2}-\frac{d}{k}}
\|K-K*V^N\|_{L^k}
+\|K\|_{L^k}
N^{-\frac12+\frac{d\alpha}{k}}\big).
\end{align*}

Now for $\big\lVert \lVert \widetilde{M}_{t}^N \rVert_{k^*}\big\rVert_{L^2(\PP)}$ we proceed as in Proposition~\ref{lemma:stochastic-integral}. The only difference with $M_{t}^N$ is that the particles are the $X^{i,N}$ there, and $Y^{i,N}$ in $\widetilde{M}^N$. In the proof of Proposition~\ref{lemma:stochastic-integral}, the only estimate on the particle dynamics which is used is the uniform moment bound of 
Lemma~\ref{lem:uniform_bounds}. 
We can recover such a bound by the same partial Girsanov argument as above, this time decoupling only one particle.
Let us detail this argument here. By relabelling the coordinates, it is enough to consider the first particle $i=1$.
We apply Lemma~\ref{lem:martingale-bounds} with $r=1$. Under the
corresponding reference probability measure $\hat{\PP}$, the first particle solves
\[\dd\widehat Y_t^{1,N}
    = -F(\widehat Y_t^{1,N})\dd t+\sqrt2 \dd W_t^1\]
and is independent of the other particles. Moreover, for every $p>1$,
Lemma \ref{lem:martingale-bounds} gives
\begin{equation}\label{eq-1-moments}
    \sup_{N\in\mathbb N^*}
    \sup_{x\in(\mathbb R^d)^N}
    \sup_{t\in[0,1]}
    \|Z_t^{1,N}\|_{L^p(\hat{\PP})}
    \leq C_p.
\end{equation}
Motivated by Assumption~\ref{assum:initial}, fix 
\( m = 2 \vee (d+1)\left(1-\frac{2}{k}\right)>0\)
and
$\widehat Y_0^N=x=(x_1,\ldots,x_N)\in(\mathbb R^d)^N$.
By Lemma~\ref{lem:martingale-bounds}, under the probability measure
$\Q^{1,N}$ defined by the Girsanov density $Z^{1,N}$, the process
$\widehat Y^N$ has the same law as the original particle 
system~\eqref{non-mollified-particle-system} starting
from $x$. Hence, for any $t\in(0,1]$ and $p>1$ such that $mp^*\geq1$,
\begin{align}\nonumber
    \left(
        \mathbb E_{\Q^{1,N}}|\widehat Y_t^{1,N}|^m
    \right)^{1/m}
    &=\left(
        \mathbb E_{\hat{\PP}}
        \left[
            Z_t^{1,N}|\widehat Y_t^{1,N}|^m
        \right]
    \right)^{1/m}
    \\ \label{first-bound-one-part}
    &\leq
    \|Z_t^{1,N}\|_{L^p(\hat{\PP})}^{1/m}
    \left(
        \mathbb E_{\hat{\PP}}
        |\widehat Y_t^{1,N}|^{mp^*}
    \right)^{1/(mp^*)}.
\end{align}
Since $\widehat Y^{1,N}$ is a generalized Ornstein-Uhlenbeck process
starting from $x_1$, the contractivity assumption on $F$ allows easily to obtain
that there exists a constant $C>0$ such that
for any $t\geq0$,
\begin{equation}
\label{eq:ou-contr}
\left(
 \mathbb E_{\hat{\PP}}
  |\widehat Y_t^{1,N}|^{mp^*}
    \right)^{1/(mp^*)}
    \leq
    e^{-\lambda t}|x_1|+C.
\end{equation} 
Moreover, since $Z^{1,N}$ is a positive martingale,
\(\|Z_1^{1,N}\|_{L^1(\hat{\PP})}=1\) and
\eqref{eq-1-moments} holds for every $p>1$. Fixing $p_0>1$,
for $1<p<p_0$, interpolation between $L^1$ and $L^{p_0}$ gives, for \(\frac1p=\theta+\frac{1-\theta}{p_0},\)
\[ \|Z_1^{1,N}\|_{L^p(\hat{\PP})}
    \leq
    \|Z_1^{1,N}\|_{L^1(\hat{\PP})}^\theta
    \|Z_1^{1,N}\|_{L^{p_0}(\hat{\PP})}^{1-\theta}
    \leq
    C_{p_0}^{1-\theta}.\]
Since $\theta\to1$ as $p\to 1$, we obtain
\begin{equation}\label{limit}
    \sup_{N\in\mathbb N^*}
    \sup_{x\in(\mathbb R^d)^N}
    \|Z_1^{1,N}\|_{L^p(\hat{\PP})}
    \longrightarrow1
    \qquad\text{as }p\to 1.
\end{equation}
Since $e^{-\lambda}<1$ in \eqref{eq:ou-contr} and using \eqref{limit},
 we may
therefore choose $p>1$ sufficiently close to $1$ so that $mp^*\geq1$ and
\[ \rho\coloneqq  e^{-\lambda}
    \sup_{N\in\mathbb N^*}
    \sup_{x\in(\mathbb R^d)^N}
    \|Z_1^{1,N}\|_{L^p(\hat{\PP})}^{1/m}
    <1.\]
Returning to \eqref{first-bound-one-part}, 
we thus obtain that
for the original particle system 
\eqref{non-mollified-particle-system} started from the
deterministic configuration $x$, 
\[\left( \mathbb E \left[ |Y_1^{1,N}|^m
\,\mid \,
Y_0^N=x\right]\right)^{1/m} \leq \rho |x_1|+C.\]
Therefore, by conditioning with respect to $Y_0^N$, since $m\geq1$,
\begin{align*}
    \left(
        \mathbb E|Y_1^{1,N}|^m
    \right)^{1/m}
    &\leq
    \left(
        \mathbb E
        \left[
            \big(
                \rho|Y_0^{1,N}|+C
            \big)^m
        \right]
    \right)^{1/m}
    \leq
    \rho \left(\mathbb E|Y_0^{1,N}|^m \right)^{1/m} +C,
\end{align*}
where we used Minkowski's inequality in the last step.
Since the constants are uniform with respect to the initial
configuration, the Markov property gives the same estimates on each
interval of length one. By relabelling the coordinates, the same
estimates hold for every $i\in\{1,\ldots,N\}$. As $\rho<1$, iteration
together with Assumption~\ref{assum:initial} yields a uniform bound at
integer times. Moreover, using~\eqref{first-bound-one-part} for
$s\in (0,1)$, without requiring a contraction, we obtain
\[  \mathbb E
    \left[
        |Y_{n+s}^{i,N}|^m
        \,\mid\,
        Y_n^N
    \right]
    \leq
    C\left(1+|Y_n^{i,N}|^m\right).\]
Hence,
\[\sup_{N\in\mathbb N^*}
    \max_{1\leq i\leq N}
    \sup_{t\geq0}
    \mathbb E|Y_t^{i,N}|^m
    <\infty.\]
Consequently, the uniform moment estimate needed in the proof of Proposition
\ref{lemma:stochastic-integral} remains valid for the non-mollified
particle system. 
Indeed, the partial Girsanov argument provides moment estimates
for the original particle system under $\PP$, by equality in law
with the auxiliary system under the transformed measure.
The stochastic integral can therefore be estimated directly
under $\PP$, using the original Brownian motions and the same
BDG argument as in
Proposition \ref{lemma:stochastic-integral}.
Hence, repeating
the proof of Proposition 
\ref{prop:mild-formulation},
 we obtain that for any
$\varepsilon>0$ and any $t\in(0,1]$,
\begin{align} \nonumber
    \left\|\|v_t^N-u_t\|_{k^*}\right\|_{L^2(\PP)}
    &\leq
    C\Theta(t)
    W_2^N(\mathcal L Y_0^N,u_0^{\otimes N})
    \\ \label{bound-2}
    &\quad +C\big(\Theta(t) N^{-\min\left(
            \alpha,
            \frac12-\frac{d\alpha}{2}-\varepsilon
        \right)}+ t^{\frac{1}{2}-\frac{d}{k}} \|K-K*V^N\|_{L^k}
    + \|K\|_{L^k}N^{-\frac12+\frac{d\alpha}{k}}\big).
\end{align}

\noindent\textbf{Conclusion.} Now plug \eqref{bound-2} and \eqref{eq:first-term-final} into 
\eqref{two-side-decomposition} and, similarly to Section~\ref{sec:main-res},
iterate the resulting estimate. Since $N^{-\frac12+\frac{d\alpha}{k}} \leq
C N^{-\min\left(
            \alpha,
            \frac12-\frac{d\alpha}{2}-\varepsilon
        \right)}$,
and taking $\|K\|_{L^k}$ sufficiently small,
there exist constants $C,c>0$ such that for any $t>0$,
\begin{align*}
    \mathbb W_2^N(\mathcal L_{Y_t^N},u_t^{\otimes N})
    &\leq
    C\left(
    e^{-ct}
    \mathbb W_2^N(\mathcal L_{Y_0^N},u_0^{\otimes N})
    +
     N^{-\min\left(
            \alpha,
            \frac12-\frac{d\alpha}{2}-\varepsilon
        \right)}+
    \|K-K*V^N\|_{L^k}
    \right).
\end{align*}
It remains to prove the estimate on the mollified empirical density.
We proceed as in the end of Section~\ref{sec:main-res}. For $t\in(0,1]$, using~\eqref{bound-2}, the definition of $\Theta$ and the fact that $k>d$,
we obtain
\begin{align*}
    \left\|\|v_t^N-u_t\|_{k^*}\right\|_{L^2(\PP)}
    &\leq
    C t^{-\frac12-\frac{d}{2k}}
    \left(
    \mathbb W_2^N\left(\mathcal L_{Y_0^N},u_0^{\otimes N}\right)
    +
    N^{-\min\left(\alpha,\frac12-\frac{d\alpha}{2}
    -\varepsilon\right)}
    +
    \|K-K*V^N\|_{L^k}
    \right).
\end{align*}
For $t>1$, we apply \eqref{bound-2} on the time interval
$[t-1,t]$ and use the uniform-in-time Wasserstein estimate obtained
above. Arguing exactly as in the end of Section~\ref{sec:main-res}, we conclude that
for any $t>0$,
\begin{align*}
    \left\|\|v_t^N-u_t\|_{k^*}\right\|_{L^2(\PP)}
    &\leq
    C(1\wedge t)^{-\frac12-\frac{d}{2k}}
    \big( e^{-ct}
    \mathbb W_2^N(\mathcal L_{Y_0^N},u_0^{\otimes N})
    +N^{-\min\left(\alpha,\frac12-\frac{d\alpha}{2}
    -\varepsilon\right)}
    +\|K-K*V^N\|_{L^k}
    \big).
\end{align*}

\section{Proof of Corollary~\ref{cor:pathwise-poc}}
\label{sec:corollary}

The proof of Corollary~\ref{cor:pathwise-poc} is an almost immediate consequence of the proof of Theorem~\ref{thm:main} and of Theorem~\ref{thm:main-non-mollified}. The only difference is that we do not consider an optimal coupling for the initial conditions, but  directly proceed to bound $\big(\frac1N\sum_{i=1}^N
\E\big[|X_t^{i,N}-X_t^i|^2\big]\big)^{1/2}$.

\begin{proof}[Proof of Corollary~\ref{cor:pathwise-poc}]
We first consider the mollified particle system.
Returning to \eqref{eq:W2-coupling-1}, combining it with estimate
\eqref{L2-diff-2} which, since we are not considering an optimal coupling for the initial conditions, now reads
\begin{align*}
\left\|\|u_t^N-u_t\|_{k^*}
\right\|_{L^2(\PP)}
\leq C_{2}\Theta(t) \bigg(
\Big(\frac1N\sum_{i=1}^N\E\left[|X_0^{i,N}-X_0^i|^2\right]\Big)^{1/2}
+N^{-\min\big( \alpha,
    \frac12-\frac{d\alpha}{2}-\varepsilon\big)}\bigg).
\end{align*}
Thus we obtain for any $t\in(0,1]$,
\begin{align*}
&\Big(\frac1N\sum_{i=1}^N
\E\big[|X_t^{i,N}-X_t^i|^2\big]\Big)^{1/2}\\
&\leq e^{\int_0^t (C\|K\|_{L^k}s^{-\frac12-\frac{d}{2k}}-\lambda)\dd s}
\Big(\frac1N\sum_{i=1}^N\E\big[|X_0^{i,N}-X_0^i|^2\big]\Big)^{1/2}\\
&\quad+\|K\|_{L^k}\int_0^t
e^{\int_s^t (C\|K\|_{L^k}r^{-\frac12-\frac{d}{2k}}-\lambda)\dd r}
\left\|\|u_s^N-u_s\|_{k^*}\right\|_{L^2(\PP)}\,\dd s\\
& \leq \Big(\frac1N\sum_{i=1}^N\E\big[|X_0^{i,N}-X_0^i|^2\big]\Big)^{1/2} \Big(e^{\int_0^t
(C_{1}\|K\|_{L^k}s^{-\frac12-\frac{d}{2k}}-\lambda)\dd s} +C_{2}\|K\|_{L^k} e^{C_{1}\|K\|_{L^k}}\Big)
 + C_{2} N^{-\min\big(\alpha, \frac12-\frac{d\alpha}{2}-\varepsilon\big)}.
\end{align*}     
Therefore, arguing exactly as in the proof of
Theorem~\ref{thm:main}, at time $t=1$ we obtain
\begin{align*}
\Big(
\frac1N\sum_{i=1}^N
\E\big[|X_1^{i,N}-X_1^i|^2\big]
\Big)^{1/2}
&\leq
\mathcal{A}
\Big(\frac1N\sum_{i=1}^N
\E\big[|X_0^{i,N}-X_0^i|^2\big]\Big)^{1/2}
+
C N^{-\min\left(
\alpha,\frac12-\frac{d\alpha}{2}-\varepsilon
\right)},
\end{align*}
where $\mathcal{A}<1$ for $\|K\|_{L^k}$ sufficiently small.
We then iterate this estimate on intervals of length $1$ and conclude, 
as in the proof of Theorem~\ref{thm:main}, that there exist constants $C,c>0$
such that for any $t>0$,
\begin{align*}
\Big(\frac1N\sum_{i=1}^N
\E\big[|X_t^{i,N}-X_t^i|^2\big]\Big)^{1/2}
&\leq
C e^{-ct}
\Big(\frac1N\sum_{i=1}^N \E\big[|X_0^{i,N}-X_0^i|^2\big]\Big)^{1/2}
+
C N^{-\min\left(
\alpha,\frac12-\frac{d\alpha}{2}-\varepsilon
\right)}.
\end{align*}

For the non-mollified particle system, the proof is identical,
using the estimates obtained in the proof of
Theorem~\ref{thm:main-non-mollified}. The corresponding local
estimate contains the additional error
$\|K-K*V^N\|_{L^k}$. Iterating on intervals of length $1$ therefore
gives the same conclusions with
$\|K-K*V^N\|_{L^k}$ added to the right-hand side.
\end{proof}

\appendix
\section{Proof of Proposition~\ref{prop:ex-uniq}}\label{app:ex-uniq}
\begin{proof}
\textbf{Existence.}
     Assumptions \ref{assum:potentials}
     imply assumption ($A_1$) in \cite{HRW25} with $p=\infty$. 
    \cite[Theorem 2.1]{HRW25} provides, for any initial law 
     $\mathcal{L}_{X_0} \in \mathcal{P}(\R^d)$, a unique
     global strong solution to \eqref{mckean-vlasov} 
    in a localized class (see \cite[Definition~1.2]{HRW25}) which verifies 
\begin{equation*}
        \sup_{t \in (0,T]} t^{\frac{d}{2k}}\|u_t\tilde\|_{{k}^*} < \infty, \quad
         \forall T>0.
\end{equation*}
The goal is to show that existence and uniqueness hold in 
the larger class $\mathcal{C}_T$ introduced in Definition~\ref{def-mckean}.
Since $\|u_t\|_{k^*} \leq \|u_t\tilde\|_{{k}^*}$, we have
\begin{equation*}
        \sup_{t \in (0,T]} t^{\frac{d}{2k}}\|u_t\|_{k^*} < \infty, 
        \quad \forall T>0.
\end{equation*}

\textbf{Uniqueness.}
     Now, let us prove uniqueness in the sense of Definition~\ref{def-mckean},
      by proving first uniqueness in law, and then pathwise uniqueness of~\eqref{mckean-vlasov}.
    Using \cite[Theorem 2.1]{HW25} and \cite[(5.33)]{HRW25}, 
   we know that for any $u_0 \in \mathcal{P}(\R^d)$, the PDE~\eqref{nlfp-intro} admits a solution 
   verifying the Duhamel formula~\eqref{eq:duhamel} and such that it is 
   in \(\{ \mu \in \mathcal{C}{([0,T];\mathcal{P}(\R^d))} : 
    \sup_{t \in (0,T]} t^{\frac{d}{2k}}\|\mu_t\tilde\|_{{k}^*} < 
    \infty \} \subset \mathcal{C}_{T}\), where we recall that $\mathcal{C}_{T}$ was defined in~\eqref{def:C_T}. Assume that there are two solutions $u, v$ of \eqref{nlfp-intro}  in $\mathcal{C}_{T}$
  with the same initial condition $u_0 \in \mathcal{P}(\R^d)$
    and verifying the Duhamel formula \eqref{eq:duhamel-u}. Then, using 
    Lemma~\ref{lem:OU-bounds} and Hölder's inequality, 
    there exists $C>0$ such that for any $t \in (0,T]$, we have
    \begin{align*}
        \|u_t - v_t\|_{k^*} &\leq
        \|\hat{P}_t^*u_0 - \hat{P}_t^*v_0\|_{k^*}
        + \left\|\int_0^t 
    \hat{P}_{t-s}^* (\nabla \left(u_s (K * u_s)\right)
    - \nabla \left(v_s (K * v_s)\right)) \dd s\right\|_{k^*} \\
    &= \left\|\int_0^t 
    \hat{P}_{t-s}^* (\nabla \left(u_s (K * u_s)\right)
    - \nabla \left(v_s (K * v_s)\right)) \dd s\right\|_{k^*} \\
    &\leq C \int_0^t \|\nabla \hat{P}_{t-s} \|_{L^{k} \to L^{\infty}}
    \|u_s (K * u_s) - v_s (K * v_s)\|_{L^1} \dd s\\
    &\leq C \int_0^t (t-s)^{-\frac{1}{2}-\frac{d}{2k}} \left( \|K\|_{L^k} \|u_s\|_{L^1}
    \|u_s-v_s\|_{k^*} + \|v_s\|_{L^1} \|K\|_{L^k}
     \|u_s-v_s\|_{k^*} \right) \dd s\\
    &\leq C \int_0^t (t-s)^{-\frac{1}{2}-\frac{d}{2k}} 
    \|u_s-v_s\|_{k^*} \dd s .
    \end{align*}
    Since $\int_{0}^{t} (t-s)^{-\frac{1}{2}-\frac{d}{2k}} \dd s < \infty$ for $k > d$,
    Gr\"onwall's lemma allows to conclude that for any 
    $T>0$, $u =v$ in \(\mathcal{C}_{T}\). 
     Now let $(X_t)_{t \geq 0}$ and
      $(Y_t)_{t \geq 0}$ be two global strong solutions of 
      \eqref{mckean-vlasov} in the sense of Definition~\ref{def-mckean} such that $X_0 = Y_0$ a.s. and 
       defined on the same probability space $(\Omega, \mathcal{F}, \mathbb{P})$
       with the same Brownian motion $(W_t)_{t \geq 0}$. By
       uniqueness in law, we have $\mathcal{L}_{X_t} = u_t 
       = \mathcal{L}_{Y_t}$ for any $t \geq 0$. Moreover, 
       using the uniqueness result of \cite[Theorem~2.1]{HW25}, 
       we have $u  \in  \mathcal{C}_{T}.$
       Then, Itô's lemma together with Proposition~\ref{prop:lip-bound} and the 
       Lipschitz property of $F$ gives that for any $t \in (0,1]$,
   \begin{align*}
\E\big[|X_t-Y_t|^2\big]
&=-2\int_0^t \E\Big[
(X_s-Y_s)\cdot\big(F(X_s)-F(Y_s)\big)
\Big]\dd s\\
&\quad -2\int_0^t \E\Big[(X_s-Y_s)\cdot
\big((K\ast u_s)(X_s)-(K\ast u_s)(Y_s)\big)
\Big]\dd s\\
&\leq
2\int_0^t\left(
C_{\mathrm{Lip}(F)}
+C_{\mathrm{Lip}(K\ast u_s)}
\right)
\E\big[|X_s-Y_s|^2\big]\dd s\\
&=2\int_0^t
\left(C_{\mathrm{Lip}(F)}
+C s^{-\frac12-\frac{d}{2k}}
\right)
\E\big[|X_s-Y_s|^2\big]\dd s.
\end{align*}
Since $k>d$,
\(\int_0^t s^{-\frac12-\frac{d}{2k}}\,\dd s<\infty\).
Therefore, Gr\"onwall's lemma together with the fact that $\E[|X_0-Y_0|^2]=0$ gives
\(\E\big[|X_t-Y_t|^2\big]=0\) for all $t \in (0,1]$.
Consequently, 
\[X_t=Y_t,
\qquad \text{a.s.},\]
for any $t\in[0,1]$.
We can then obtain pathwise uniqueness for all $t>0$ by iterating the
argument on $(n,n+1]$, $n\in\mathbb{N}$, and considering the SDE starting
at time $n$ with the common initial condition $X_n=Y_n$.
\end{proof}

\section{Proofs of Lemma~\ref{some-bounds} and Lemma~\ref{W1-bound}}\label{app:lemmas}

We first proceed with the proof of Lemma~\ref{some-bounds}.

\begin{proof}[Proof of Lemma~\ref{some-bounds}]
Using Lemma~\ref{lem:OU-bounds}\ref{item:heat-bounds} in the first inequality below and Jensen's inequality for the last one, there exists $C, \kappa > 0$ such that for any $0\leq s<t\leq1$,
\begin{align*}
   &\left\| (1+|\cdot|)^{\frac{d+1}{q}} \hat{P}^*_{t-s} 
        \big(\nabla V^N(\cdot-a_s)\big)\right\|_{L^2}^2  =
        \int_{\R^d}  (1+|y|)^{2\frac{d+1}{q}}
            \left|\int_{\R^d} \hat{p}_{t-s}(x, y)
 \nabla V^N(x-a_s)\,\dd x\right|^2 \dd y\\
        &\quad\leq C  \int_{\R^d} 
             (1+|y|)^{2\frac{d+1}{q}}
            \left|\left(\int_{\R^d} g^F_\kappa(t-s, x, y)\, \dd x \right)
        \int_{\R^d} \frac{g^F_\kappa(t-s, x, y)}{\int_{\R^d}
         g^F_\kappa(t-s, x, y)\, \dd x}
     \nabla V^N(x-a_s)\, \dd x\right|^2 \dd y
           \\
     &\quad\leq C \int_{\R^d}  (1+|y|)^{2\frac{d+1}{q}}
            \left(\int_{\R^d} g^F_\kappa(t-s, x, y)\, \dd x \right)
    \int_{\R^d} g^F_\kappa(t-s, x, y)
    |\nabla V^N(x-a_s)|^2\, \dd x\,\dd y.
\end{align*}
Lemma~\ref{lem:OU-bounds}\ref{item:sgG-Lp} implies that \(\int_{\R^d} g^F_\kappa(t-s, x, y)\, \dd x  \leq 
\sup_{y \in \R^d}  |G_{t-s}^{\kappa,F,*} \mathbbm{1}(y)| \leq 
\|G_{t-s}^{\kappa,F,*}\|_{L^\infty \to L^\infty} \leq C\), for some $C<\infty$ independent of $t-s$. This yields 
\begin{align*}
    \Big\| (1+|\cdot|)^{\frac{d+1}{q}} \hat{P}^*_{t-s} 
        \big(\nabla V^N(\cdot-a_s)\big)\Big\|_{L^2}^2 
      \leq  C \int_{\R^d}  (1+|y|)^{2\frac{d+1}{q}}
    \int_{\R^d} g^F_\kappa(t-s, x, y)
    |\nabla V^N(x-a_s)|^2\, \dd x\,\dd y ,
\end{align*}
and in view of~\eqref{eq:gaussian-kernel}, one can perform a change of variables which, for $n_\kappa(t-s, x)$ the normal distribution with mean $0$ and covariance matrix $\kappa (t-s) I_d$, reads
\begin{align*}
    \Big\| (1+|\cdot|)^{\frac{d+1}{q}}& \hat{P}^*_{t-s} 
        \big(\nabla V^N(\cdot-a_s)\big)\Big\|_{L^2}^2  \\
    &\leq C \int_{\R^d} |\nabla V^N(x-a_s)|^2 
    \int_{\R^d} (1+|y-\theta_{t-s}(x)|)^{2\frac{d+1}{q}}
    n_\kappa(t-s, y)\, \dd y\,\dd x.
\end{align*}
Applying the inequality $(1+|y-
\theta_{t-s}(x)|)^{2\frac{d+1}{q}} 
\leq (1+|y|)^{2\frac{d+1}{q}}(1+
|\theta_{t-s}(x)|)^{2\frac{d+1}{q}}$ gives
\begin{align*}
       \Big\| (1+|\cdot|)^{\frac{d+1}{q}}& \hat{P}^*_{t-s} 
        \big(\nabla V^N(\cdot-a_s)\big)\Big\|_{L^2}^2 
      \\
      &\leq C \left(\int_{\R^d} |\nabla V^N(x-a_s)|^2 
      (1+|\theta_{t-s}(x)|)^{2\frac{d+1}{q}} \dd x\right)
    \left( \int_{\R^d} (1+|y|)^{2\frac{d+1}{q}} 
    n_\kappa(t-s, y)\, \dd y\right).
\end{align*}
Integrating the ODE for $\theta_t$ (see \eqref{eq:gaussian-kernel}), using the Lipschitz continuity of $F$ and applying 
Gr\"onwall's inequality, there is
\begin{equation}\label{eq:theta_bound}
|\theta_{t-s}(y)| \leq C (1+|y|), \quad 0\leq s< t\leq 1.
\end{equation}
Moreover, using
\[
1+|x|\leq (1+|a_s|)(1+|x-a_s|),
\]
and making the change of variables $z=x-a_s$, we obtain
\begin{align*}
\int_{\R^d} |\nabla V^N(x-a_s)|^2 
      (1+|\theta_{t-s}(x)|)^{2\frac{d+1}{q}} \dd x
\leq C(1+|a_s|)^{2\frac{d+1}{q}}
\int_{\R^d}|\nabla V^N(z)|^2
(1+|z|)^{2\frac{d+1}{q}}\dd z.
\end{align*}
Therefore, using the compact support of $V$ and the scaling of $V^N$,
\begin{equation}\label{eq:bound1}
 \left\| (1+|\cdot|)^{\frac{d+1}{q}} \hat{P}^*_{t-s} 
        \big(\nabla V^N(\cdot-a_s)\big)\right\|_{L^2}^2 
 \leq C(1+|a_s|)^{2\frac{d+1}{q}}N^{d \alpha + 2 \alpha}.
\end{equation}
Alternatively, 
using Lemma~\ref{lem:OU-bounds}\ref{item:heat-bounds} combined with Jensen's inequality and the change of variables derived from~\eqref{eq:gaussian-kernel} with
\eqref{eq:theta_bound}, it follows that
\begin{align*} 
    \Big\|& (1+|\cdot|)^{\frac{d+1}{q}} \hat{P}^*_{t-s} 
        \big(\nabla V^N(\cdot-a_s)\big)\Big\|_{L^2}^2 =
        \int_{\R^d}  (1+|x|)^{2\frac{d+1}{q}}
            \left|\int_{\R^d} \hat{p}_{t-s}(y, x)
 \nabla V^N(y-a_s)\,\dd y\right|^2 \dd x \\ 
 &= \int_{\R^d}  (1+|x|)^{2\frac{d+1}{q}}
            \left|\int_{\R^d} \nabla_y\hat{p}_{t-s}(y, x)
 V^N(y-a_s)\,\dd y\right|^2 \dd x \\ 
        &\leq C (t-s)^{-1}\int_{\R^d} 
             (1+|x|)^{2\frac{d+1}{q}}
            \left|\left(\int_{\R^d} g^F_\kappa(t-s, x, y) \dd y\right)
        \int_{\R^d} \frac{g^F_\kappa(t-s, x, y)}{\int_{\R^d}
         g^F_\kappa(t-s, x, y) \dd y}
    V^N(y-a_s) \dd y\right|^2 \dd x
           \\ 
     &\leq C (t-s)^{-1} \int_{\R^d}  (1+|x|)^{2\frac{d+1}{q}}
            \left(\int_{\R^d} g^F_\kappa(t-s, x, y) \dd y\right)
    \int_{\R^d} g^F_\kappa(t-s, x, y)
    |V^N(y-a_s)|^2 \dd y\,\dd x\\ 
    & \leq C (t-s)^{-1} \left(\int_{\R^d} |V^N(y-a_s)|^2 
    (1+|\theta_{t-s}(y)|)^{2\frac{d+1}{q}} \dd y\right)
    \left( \int_{\R^d} (1+|x|)^{2\frac{d+1}{q}}
     n_\kappa(t-s, x) \dd x \right).
\end{align*}
Using again \eqref{eq:theta_bound},
\(1+|y|\leq (1+|a_s|)(1+|y-a_s|),\)
and making the change of variables $z=y-a_s$, we obtain
\begin{align*}
\int_{\R^d} |V^N(y-a_s)|^2 
    (1+|\theta_{t-s}(y)|)^{2\frac{d+1}{q}} \dd y
    \leq C(1+|a_s|)^{2\frac{d+1}{q}}
\int_{\R^d}|V^N(z)|^2
(1+|z|)^{2\frac{d+1}{q}}\dd z.
\end{align*}
Therefore, using the compact support of $V$ and the scaling of $V^N$ so that $\| V^N\|_{L^2} \lesssim N^{\frac{d \alpha}{2}}$, it comes
\begin{equation}\label{eq:bound2}
    \left\| (1+|\cdot|)^{\frac{d+1}{q}} \hat{P}^*_{t-s} 
        \big(\nabla V^N(\cdot-a_s)\big)\right\|_{L^2}^2
    \leq C (t-s)^{-1}(1+|a_s|)^{2\frac{d+1}{q}}N^{d \alpha }.
\end{equation}
Let $\varepsilon\in (0,4\alpha)$.
Interpolating between~\eqref{eq:bound1} and~\eqref{eq:bound2},
there exists a constant $C>0$ such that for any $t \in (0,1]$,
\begin{align*}
 \int_0^t& \left\| (1+|\cdot|)^{\frac{d+1}{q}} \hat{P}^*_{t-s}
        \big(\nabla V^N(\cdot-a_s)\big)\right\|_{L^2}^2  \dd s  
        \\
&= \int_0^t \left\| (1+|\cdot|)^{\frac{d+1}{q}} \hat{P}^*_{t-s}
        \big(\nabla V^N(\cdot-a_s)\big)\right\|_{L^2}^{2\frac{\varepsilon}{4 \alpha}} 
         \left\| (1+|\cdot|)^{\frac{d+1}{q}} \hat{P}^*_{t-s}
        \big(\nabla V^N(\cdot-a_s)\big)\right\|_{L^2}^{2(1-\frac{\varepsilon}{4 \alpha})} \dd s\\
&\leq C \int_0^t 
\left((1+|a_s|)^{2\frac{d+1}{q}}
N^{d \alpha + 2 \alpha}\right)^{\frac{\varepsilon}{4 \alpha}} 
\left((t-s)^{-1}(1+|a_s|)^{2\frac{d+1}{q}}
N^{d \alpha }\right)^{1-\frac{\varepsilon}{4 \alpha}} \dd s\\
&= C N^{d \alpha + \frac{\varepsilon}{2}}
\int_0^t (t-s)^{-1+\frac{\varepsilon}{4 \alpha}}
(1+|a_s|)^{2\frac{d+1}{q}}\dd s,
\end{align*}
which concludes the proof.
\end{proof}

The following lemma is a technical computation 
concerning the Wasserstein distance between the initial 
empirical measure and the initial law of the particle system. 
One can see that
$\mathbb{W}_1(u_0^N,u_0)$ appears in the bounds obtained in Proposition 
\ref{prop:mild-formulation}. 
Precisely, in the proof of Theorem~\ref{thm:main}, 
we need to control the $L^2(\PP)$ norm of $\mathbb{W}_1(u_0^N,u_0)$. 

\begin{lemma}
\label{W1-bound}
Consider $u_0^N$ as defined in \eqref{eq:initial-empirical-measure}, and $u_0$ the initial 
law of the nonlinear Fokker-Planck equation~\eqref{nlfp-intro}. For any $\varepsilon>0$, there exists a constant $C>0$ such that, for any $N\geq 1$,
\begin{equation*}
\|\mathbb{W}_1(u_0^N,u_0)\|_{L^2(\PP)}
\leq \mathbb{W}^N_2 \bigl(\mathcal{L}_{X_0^N},u_0^{\otimes N}\bigr) +
C (N^{-\alpha} + f(N)),
\end{equation*}
where
\begin{equation}
\label{eq:defFN}
f(N) =
\begin{cases}
N^{-1/2+\varepsilon},
& d=1,\\[1mm] 
N^{-1/2}\log(1+N),
& d=2,\\[1mm]
N^{-1/d},
& d\geq3.
\end{cases}
\end{equation}
\end{lemma}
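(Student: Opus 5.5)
We bound $\mathbb{W}_1(u_0^N,u_0)$ by the triangle inequality through two intermediate measures: the non-mollified initial empirical measure $\mu_0^N=\frac1N\sum_i\delta_{X_0^{i,N}}$, and the empirical measure $\nu_0^N\coloneqq\frac1N\sum_i\delta_{X_0^i}$ of the i.i.d.\ variables $X_0^i\sim u_0$, which we couple optimally with $X_0^N$ so that they realise $\mathbb{W}_2^N(\mathcal L_{X_0^N},u_0^{\otimes N})$. This gives
\[
\mathbb{W}_1(u_0^N,u_0)\le \mathbb{W}_1(V^N\ast\mu_0^N,\mu_0^N)+\mathbb{W}_1(\mu_0^N,\nu_0^N)+\mathbb{W}_1(\nu_0^N,u_0),
\]
and we take the $L^2(\PP)$ norm of each term separately, using Minkowski's inequality.

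For the first term, we couple each atom $\delta_{X_0^{i,N}}$ with $V^N(\cdot-X_0^{i,N})$ by a translation. This yields the deterministic bound $\mathbb{W}_1(V^N\ast\mu_0^N,\mu_0^N)\le\int|z|V^N(z)\,\dd z=N^{-\alpha}\int|z|V(z)\,\dd z\le CN^{-\alpha}$. For the second term, the diagonal coupling $\frac1N\sum_i\delta_{(X_0^{i,N},X_0^i)}$ gives
\[
\mathbb{W}_1(\mu_0^N,\nu_0^N)\le\frac1N\sum_i|X_0^{i,N}-X_0^i|\le\Big(\frac1N\sum_i|X_0^{i,N}-X_0^i|^2\Big)^{1/2}.
\]
Taking the $L^2(\PP)$ norm and using the optimality of the coupling gives exactly $\mathbb{W}_2^N(\mathcal L_{X_0^N},u_0^{\otimes N})$, which is the leading term with constant $1$ as stated.

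The third term is the main obstacle. It requires the rate of convergence of the empirical measure of $N$ i.i.d.\ samples in $\mathbb{W}_1$, measured in $L^2(\PP)$, under only a second-moment assumption $P<\infty$ on $u_0$. We plan to invoke the Fournier--Guillin estimate. With a finite moment of order $q>1$, it gives $\E[\mathbb{W}_1(\nu_0^N,u_0)]\le C\,(N^{-1/2}$, $N^{-1/2}\log(1+N)$, $N^{-1/d})$ for $d=1,2,\geq3$, up to moment-dependent corrections of the form $N^{-(q-1)/q}$. With $q=2$ the correction is $N^{-1/2}$, which is dominated by $f(N)$. To pass from the first moment to the $L^2(\PP)$ norm, we will use Fournier--Guillin's bound for $\E[\mathbb{W}_p^p]$ together with $\mathbb{W}_1\le \mathbb{W}_2$ and only the second moment, or alternatively a concentration or variance argument. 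The variance route would be an Efron--Stein bound: changing one sample changes $\mathbb{W}_1(\nu_0^N,u_0)$ by at most $\frac1N|X_0^i-\tilde X_0^i|$, so $\mathrm{Var}\le CP/N$, and hence $\|\mathbb{W}_1(\nu_0^N,u_0)\|_{L^2(\PP)}\le \E[\mathbb{W}_1(\nu_0^N,u_0)]+CN^{-1/2}$.

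The delicate case is $d=1$, where borderline moment effects can produce a logarithm or a loss. This is why the stated bound is $N^{-1/2+\varepsilon}$ rather than $N^{-1/2}$. Collecting the three bounds gives $\|\mathbb{W}_1(u_0^N,u_0)\|_{L^2(\PP)}\le\mathbb{W}_2^N(\mathcal L_{X_0^N},u_0^{\otimes N})+C(N^{-\alpha}+f(N))$, with $C$ depending on $P$, $V$, $d$ and $\varepsilon$.
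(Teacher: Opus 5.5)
Your argument is essentially the paper's proof. It uses the same triangle inequality through $\mu_0^N$ and the empirical measure of the optimally coupled i.i.d.\ sample, the translation bound $\int |z|V^N(z)\,\dd z\lesssim N^{-\alpha}$, and the diagonal coupling giving exactly $\mathbb W_2^N(\mathcal L_{X_0^N},u_0^{\otimes N})$. For the last term it uses Fournier--Guillin for the mean and Efron--Stein ($\mathrm{Var}\le C/N$) for the variance.

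One correction concerns how you invoke Fournier--Guillin. Their Theorem~1 with $p=1$ excludes $q=2p=2$ in the regime $p\ge d/2$, i.e.\ for $d\in\{1,2\}$. So under a mere second moment you cannot take $q=2$ there. You must take $q=2-\varepsilon$, as the paper does, which produces the correction $N^{-(1-\varepsilon)/(2-\varepsilon)}$. This term, not a vague borderline effect, is the actual source of the $N^{-1/2+\varepsilon}$ in $d=1$. In $d=2$ the same term dominates $N^{-1/2}\log(1+N)$. This is harmless, since only $f(N)\lesssim N^{-\min(\alpha,\frac12-\frac{d\alpha}{2}-\varepsilon)}$ is used later.

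Also drop your first alternative via $\E[\mathbb W_2^2]$ and $\mathbb W_1\le\mathbb W_2$. Fournier--Guillin with $p=2$ requires a moment of order $q>2$ and gives worse rates in low dimension, so the Efron--Stein route is the right one.
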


\begin{proof}
Consider $N$ i.i.d. random variables $(X_0^i)_{i=1,\ldots,N}$ 
with law $u_0$, such that 
\newline\((X_0^{1,N},\dots, X_0^{N,N}, X_0^1, \dots, X_0^N)\) 
realize the optimal coupling of $\mathbb{W}^N_2
(\mathcal{L}_{X_0^N}, u_0^{\otimes N})$.
 Using the triangle inequality and 
 the fact that $\mathbb{W}_1(V^N \ast \mu_0^N, \mu_0^N) \lesssim N^{-\alpha}$ (see 
 \cite[(2.15)]{ORT}), there exists a constant $C>0$ such that
\begin{align}\nonumber
\mathbb{W}_1(u_0^N, u_0) &\leq \mathbb{W}_1(V^N \ast \mu_0^N, \mu_0^N)
 + \mathbb{W}_1(\mu_0^N, \frac{1}{N}
\sum_{i=1}^N \delta_{X_0^i}) +
\mathbb{W}_1(\frac{1}{N}\sum_{i=1}^N \delta_{X_0^i}, u_0)  \\ \label{W1}
&\leq C N^{-\alpha} +
\mathbb{W}_1(\mu_0^N, \frac{1}{N}\sum_{i=1}^N \delta_{X_0^i}) + 
\mathbb{W}_1(\frac{1}{N}\sum_{i=1}^N \delta_{X_0^i}, u_0) .
\end{align}
First,
\begin{align}
\label{W1-1}
\left\|\mathbb{W}_1(\mu_0^N, 
\frac{1}{N}\sum_{i=1}^N \delta_{X_0^i})\right\|_{L^2(\PP)}
&= \left\| \sup_{\|f\|_{Lip} \leq 1} \left|\frac{1}{N} \sum_{i=1}^N (f(X_0^{i,N}) - 
f(X_0^i)) 
\right|\right\|_{L^2(\PP)}\nonumber \\
& \leq \mathbb{E} \left[ \left(\frac{1}{N}\sum_{i=1}^N |X_0^{i,N} - X_0^i| \right)^2 \right]^{1/2} \nonumber \\
&\leq \mathbb{W}^N_2(\mathcal{L}_{X_0^N}, u_0^{\otimes N}),
\end{align}
using Jensen's inequality in the final line.

Second, we need to bound the $L^2$ norm of \(
\mathbb{W}_1\left(\frac1N\sum_{i=1}^N \delta_{X_0^i},u_0\right)\).
We decompose it as follows:
\begin{align}\label{LLN-W2}
\left\|\mathbb{W}_1 \left(\frac1N\sum_{i=1}^N \delta_{X_0^i},u_0\right)\right\|_{L^2(\PP)}^2
= \E \left[\mathbb{W}_1 \left(\frac1N\sum_{i=1}^N \delta_{X_0^i},u_0\right)\right]^2 + 
\text{Var} \left[\mathbb{W}_1 \left(\frac1N\sum_{i=1}^N \delta_{X_0^i},u_0\right)\right].
\end{align}
Take $\varepsilon >0$.
For the square of the expectation in \eqref{LLN-W2}, since the considered measures have bounded 
second moments, we use \cite[Theorem 1]{FG15} with \(p=1\). More precisely, taking \(q=2-\varepsilon\) in
dimensions \(d\in \{1,2\}\), and \(q=2\) for \(d\ge3\), we obtain
\[
\E \left[\mathbb{W}_1 \left(\frac1N\sum_{i=1}^N \delta_{X_0^i},u_0\right)\right]
\le C f(N),
\]
where \(f(N)\) is defined in \eqref{eq:defFN}.
For the variance in \eqref{LLN-W2}, consider 
 \((Y_0^{i})_{1\leq i \leq N}\)  an independent copy of \((X_0^i)_{1\leq i \leq N}\), then
  use the Efron-Stein inequality (see
for example  
\cite[Theorem 5 p.213]{BLR04}) to obtain
\begin{align}\nonumber
\text{Var} \left[\mathbb{W}_1 \left(\frac1N\sum_{i=1}^N 
\delta_{X_0^i},u_0\right)\right]
&\le \frac{1}{2}\sum_{i=1}^N 
\E \left[\left(\mathbb{W}_1 \left(\frac1N\sum_{j=1}^N
 \delta_{X_0^j},u_0\right) -
\mathbb{W}_1 \left(\frac1N\sum_{j\neq i}^N
 \delta_{X_0^j} + \frac1N\delta_{Y_0^{i}},
 u_0\right)\right)^2\right]\\\nonumber
&\le \frac{1}{2} \sum_{i=1}^N \E \left[\mathbb{W}^2_1 
\left(\frac1N\sum_{j=1}^N \delta_{X_0^j},\frac1N\sum_{j\neq i}^N 
\delta_{X_0^j} + \frac1N\delta_{Y_0^{i}}\right)\right]\\\nonumber
& = \frac{1}{2} \sum_{i=1}^N \E \left[\left(\sup_{\|f\|_{Lip}
 \leq 1} \frac{1}{N} |f(X_0^i) - f(Y_0^{i})|\right)^2\right]\\\nonumber
&\le \frac{1}{2}\sum_{i=1}^N \E \left[\frac{1}{N^2}|X_0^i - Y_0^{i}|^2\right]
= \frac{1}{2N}\E \left[|X_0^1 - Y_0^{1}|^2\right]\\\label{W1-2}
&\leq C \frac{1}{N} \int_{\R^d} |x|^2 u_0(\dd x) \leq \frac{C}{N}.
\end{align}
Hence, combining the bounds on the expectation and the variance, we conclude that
\[
\left\|\mathbb{W}_1 \left(\frac1N\sum_{i=1}^N \delta_{X_0^i},u_0\right)\right\|_{L^2(\PP)}
\le C f(N),
\]
where \(f(N)\) is defined in \eqref{eq:defFN}.
The result follows by plugging the bounds from \eqref{W1-1} and \eqref{W1-2} into~\eqref{W1}.
\end{proof}
\begin{remark}\label{rem:const-1}
Notice that, under the assumptions of 
Theorem~\ref{thm:main},
\(\sup_{t\geq0}\int_{\R^d}|x|^2u_t(\dd x)<\infty\)
(see \cite[Theorem 3.1]{HW25}).
When Lemma~\ref{W1-bound} is applied on a shifted interval
$[n,n+1]$, the measure $u_n$ plays the role of the initial law.
Therefore, the above uniform moment bound ensures that the constant
$C$ in Lemma~\ref{W1-bound} does not depend on $n$. In the proof of 
Theorem \ref{thm:main},
this allows us
to iterate the argument on successive time intervals of length $1$.
\end{remark}

\section{Partial Girsanov estimates}\label{annex:part-girsanov}

We first give an analogue of \cite[Proposition 2.1]{T23} adapted to the generalized
Ornstein-Uhlenbeck process with drift $F$.

\begin{lemma}
\label{lem:exponential-OU}
Assume that Assumption~\ref{assum:potentials} holds
and that $k>(d \vee 2)$. Let
$(\Omega,\mathcal F,(\mathcal F_t)_{t\geq0}, \hat{\mathbb P})$ be a probability space 
on which a $d$-dimensional Brownian motion $W$ is defined. 
For $y\in\mathbb R^d$,
 let $\overline Y$ be the unique strong solution of
\[\dd\overline Y_t
=-F(\overline Y_t)\,\dd t
 +\sqrt2\,\dd W_t,
 \qquad
\overline Y_0=y.\]
Then for any $\alpha>0$, there exists $C_{\alpha}<\infty$ such that
\begin{equation}
\label{eq:exp-OU-K}
    \sup_{y\in\mathbb R^d}
    \sup_{w\in C([0,1];\mathbb R^d)}
    \mathbb E
    \left[
        \exp\left(
            \alpha\int_0^1
            |K(\overline Y_s-w_s)|^2\,\dd s
        \right)
    \right]
    \leq C_\alpha.
\end{equation}
\end{lemma}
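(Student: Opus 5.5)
The natural route is Khasminskii's lemma, which reduces exponential integrability to a small-time bound on the expected additive functional. Set $h(s,z)\coloneqq |K(z-w_s)|^2$ for a fixed path $w\in C([0,1];\R^d)$. Since $K\in L^k$ with $k>2$, we have $|K|^2\in L^{k/2}(\R^d)$, and translation does not change norms: $\|h(s,\cdot)\|_{L^{k/2}}=\|K\|_{L^k}^2$ for every $s$ and every $w$. The condition $k>d$ means $p\coloneqq k/2>d/2$, which is exactly the Krylov--R\"ockner/Kato-class threshold for a heat-type kernel.

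\textbf{Step 1: uniform bound on the expected functional.} I will show that for $0\le r\le t\le 1$,
\[
\sup_{y}\ \mathbb E_y\Big[\int_r^t |K(\overline Y_s-w_s)|^2\,\dd s\ \Big|\ \mathcal F_r\Big]\le C\|K\|_{L^k}^2 (t-r)^{1-\frac{d}{k}}.
\]
By the Markov property, the conditional expectation equals $\int_r^t \hat P_{s-r} h(s,\cdot)(\overline Y_r)\,\dd s$. Lemma~\ref{lem:OU-bounds}\ref{item:OU-Lp}, applied with $p_1=k/2$ and $p_2=\infty$, gives
\[
\|\hat P_{s-r}h(s,\cdot)\|_{L^\infty}\le C (s-r)^{-\frac{d}{k}}\|K\|_{L^k}^2.
\]
This bound depends neither on $y$ nor on $w$, and it is integrable because $d/k<1$. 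Integrating in $s$ yields the displayed bound.

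\textbf{Step 2: Khasminskii on short intervals.} For a nonnegative functional $A_{r,t}=\int_r^t h(s,\overline Y_s)\,\dd s$, expanding $A^n$ as an iterated time-ordered integral and conditioning successively gives
\[
\sup_y \mathbb E_y\big[A_{r,t}^n\big]\le n!\,\eta^n,\qquad \eta\coloneqq\sup_{y,r}\mathbb E_y[A_{r,t}\mid\mathcal F_r].
\]
Consequently $\mathbb E\big[e^{\alpha A_{r,t}}\big]\le (1-\alpha\eta)^{-1}$ whenever $\alpha\eta<1$. The Markov property is needed here for the time-inhomogeneous functional; I handle it by working with the space-time process $(s,\overline Y_s)$, which is Markov, so Step~1's conditional bound applies at every intermediate time. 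Choose $\delta>0$ with $\alpha C\|K\|_{L^k}^2\delta^{1-d/k}\le 1/2$. Then on each interval of length $\delta$ the conditional exponential moment is at most $2$, uniformly in the starting point and in $w$.

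\textbf{Step 3: chaining.} Split $[0,1]$ into $n=\lceil 1/\delta\rceil$ intervals $[t_{m-1},t_m]$. Conditioning backwards on $\mathcal F_{t_{m-1}}$, the tower property together with the uniform conditional bound of Step~2 gives
\[
\mathbb E\Big[e^{\alpha\int_0^1 h(s,\overline Y_s)\,\dd s}\Big]\le 2^{n}\eqqcolon C_\alpha.
\]
This constant is independent of $y$ and $w$, which proves \eqref{eq:exp-OU-K}.

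The main obstacle is justifying the uniform $L^{k/2}\to L^\infty$ bound for $\hat P$ and checking that its constant does not depend on $w$. Lemma~\ref{lem:OU-bounds}\ref{item:OU-Lp} supplies the bound, and translation invariance of the $L^{k/2}$ norm removes the dependence on $w$. What remains is the rigorous conditional-moment (Khasminskii) expansion for a time-dependent potential, which I carry out via the Markov property of the space-time process.
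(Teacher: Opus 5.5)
Your proposal is correct and follows the paper's proof essentially step for step. Both arguments use the same three steps: the conditional bound $C\|K\|_{L^k}^2(t_2-t_1)^{1-d/k}$ via the Markov property, then the $n!$ Khasminskii expansion on short intervals, then chaining over a finite subdivision of $[0,1]$. The only differences are cosmetic. You use the $L^{k/2}\to L^\infty$ bound of Lemma~\ref{lem:OU-bounds}\ref{item:OU-Lp}, while the paper uses the equivalent dual $L^{k/(k-2)}$ bound on $\hat p_{s-t_1}(\overline Y_{t_1},\cdot)$. Your detour through the space-time process is unnecessary: for fixed $s$ the function $|K(\cdot-w_s)|^2$ is deterministic, so the Markov property of $\overline Y$ itself suffices.
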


\begin{proof}The proof follows the argument of \cite[Proposition 2.1]{T23}.
The only modification is that the Brownian motion appearing therein is replaced here
by the generalized Ornstein-Uhlenbeck process $\overline Y$.

Fix $w\in C([0,1];\mathbb R^d)$. For $0\leq t_1<t_2\leq1$,
the Markov property and Lemma~\ref{lem:OU-bounds} give
\begin{align*}
\mathbb E
\left[ \left.\int_{t_1}^{t_2}
    |K(\overline Y_s-w_s)|^2\,\dd s
    \,\right|\,
    \mathcal F_{t_1}\right]
&= \int_{t_1}^{t_2}
\int_{\mathbb R^d}
    |K(z-w_s)|^2
    \widehat p_{s-t_1}
    (\overline Y_{t_1},z)
    \,\dd z\,\dd s
\\
& \leq
\|K\|_{L^k}^2
\int_{t_1}^{t_2}
\left\| \widehat p_{s-t_1}
    (\overline Y_{t_1},\cdot)
\right\|_{L^{\frac{k}{k-2}}}
\,\dd s
\\
& \leq
C\|K\|_{L^k}^2
\int_{t_1}^{t_2}
(s-t_1)^{-\frac dk}\,\dd s
\\
& \leq
C\|K\|_{L^k}^2
(t_2-t_1)^{1-\frac dk}.
\end{align*}
Since the last estimate is uniform with respect to
$\overline Y_{t_1}$, successive conditioning in the iterated integrals
gives, for every $n\geq1$,
\begin{align*}
&\mathbb E
\left[ \left.
    \left(
        \int_{t_1}^{t_2}
        |K(\overline Y_s-w_s)|^2\,\dd s
    \right)^n
    \,\right|\,
    \mathcal F_{t_1}\right]
\\
& =
n!\,\mathbb E
\left[\left.
    \int_{t_1<s_1<\cdots<s_n<t_2}
    \prod_{\ell=1}^n
    |K(\overline Y_{s_\ell}-w_{s_\ell})|^2
    \,\dd s_1\cdots\dd s_n
    \,\right|\,
    \mathcal F_{t_1}\right]
\\
& \leq
n!\left( C\|K\|_{L^k}^2
    (t_2-t_1)^{1-\frac dk}\right)^n.
\end{align*}
Consequently, whenever $t_2-t_1$ is sufficiently small so that
\[\alpha C\|K\|_{L^k}^2
    (t_2-t_1)^{1-\frac dk}\leq\frac12,\]
we have
\begin{align*}
\mathbb E
\left[ \left.\exp\left(\alpha
        \int_{t_1}^{t_2}
        |K(\overline Y_s-w_s)|^2\,\dd s\right)
    \,\right|\,
    \mathcal F_{t_1}\right]
 \leq
\sum_{n=0}^{\infty}
\left(\alpha C\|K\|_{L^k}^2
    (t_2-t_1)^{1-\frac dk}\right)^n\leq2.
\end{align*}
Since $k>d$, the exponent $1-d/k$ is positive. We can therefore
subdivide $[0,1]$ into a finite number of intervals on which the
previous estimate applies. Successive conditioning over these
intervals then yields
\[ \sup_{y\in\mathbb R^d}
    \sup_{w\in C([0,1];\mathbb R^d)}
    \mathbb E
    \left[ \exp\left( \alpha\int_0^1
            |K(\overline Y_s-w_s)|^2\,\dd s \right)\right]
    \leq C_\alpha.\]
\end{proof}

We now introduce the partial Girsanov transform that is
 used in Section~\ref{sec:non-mollified}.
Fix $r\in\{1,\ldots,N\}$ and
$x=(x_1,\ldots,x_N)\in(\mathbb R^d)^N$.
Let
$(\Omega,\mathcal F,(\mathcal F_t)_{t\geq0},\hat{\mathbb P})$ be a complete
filtered probability space, consider $N$ independent
$d$-dimensional Brownian motions $(W^i)_{i=1,\ldots,N}$, and
let $\widehat Y^N$ be the solution of
\begin{equation}
\label{eq:auxiliary-particle-system}
\left\{
\begin{aligned}
    \dd\widehat Y_t^{i,N}
    &=
    -F(\widehat Y_t^{i,N})\,\dd t
    + \sqrt2\,\dd W_t^i,
    && i=1,\ldots,r,
    \\
    \dd\widehat Y_t^{i,N}
    &=
    -F(\widehat Y_t^{i,N})\,\dd t
    - \frac1N\sum_{j=r+1}^N
    K(\widehat Y_t^{i,N}-\widehat Y_t^{j,N})\,\dd t
    + \sqrt2\,\dd W_t^i,
    && i=r+1,\ldots,N,
\end{aligned}
\right.
\end{equation}
with deterministic initial condition
\( \widehat Y_0^N=x.\)
When $r=N$, the second subsystem is empty.
Note that the well-posedness ot this system
follows from the same well-posedness result as for
\eqref{non-mollified-particle-system}.
For $i\in\{1,\ldots,N\}$, set
\begin{equation}
\label{eq:def-B-girsanov}
    B_t^{i,r,N}
    \coloneqq  \begin{cases}
    \displaystyle
    \frac1N\sum_{j=1}^N
    K(\widehat Y_t^{i,N}-\widehat Y_t^{j,N}),
    & i=1,\ldots,r,
    \\[3mm] \displaystyle
    \frac1N\sum_{j=1}^r
    K(\widehat Y_t^{i,N}-\widehat Y_t^{j,N}),
    & i=r+1,\ldots,N.
    \end{cases}
\end{equation}
For $t\in[0,1]$, consider the exponential local martingale
\begin{equation}
\label{eq:girsanov-density}
\begin{aligned}
    Z_t^{r,N}
    &\coloneqq \exp\left(
        -\frac1{\sqrt2}
        \sum_{i=1}^N
        \int_0^t
        B_s^{i,r,N}\cdot\dd W_s^i
        -
        \frac14
        \sum_{i=1}^N
        \int_0^t
        |B_s^{i,r,N}|^2\,\dd s\right).
\end{aligned}
\end{equation}
The proof of the following lemma is partially
 similar to that of \cite[Proposition 3.1]{T23}, but 
adapted to our setting.

\begin{lemma}
\label{lem:martingale-bounds}
Suppose that Assumption~\ref{assum:potentials} holds,
that $k>(d\vee2)$ and that $K(0)=0$.
Then $(Z_t^{r,N})_{t\in[0,1]}$ is a true martingale
and for every fixed $r\in\mathbb N^*$ and every $p>1$, there 
exists a constant $C = C(r,p)>0$ such that
\begin{equation}
\label{eq:uniform-martingale-bound}
    \sup_{N\geq r}
    \sup_{x\in(\mathbb R^d)^N}
    \sup_{t\in[0,1]}
    \mathbb E_{\hat{\mathbb P}}
    \left[(Z_t^{r,N})^p \right]\leq C.
\end{equation}
Moreover, defining $\mathbb Q^{r,N}$ on $\mathcal F_1$ by
\[\dd\mathbb Q^{r,N}
    \coloneqq 
    Z_1^{r,N}\,\dd\hat{\mathbb P},\]
the process $\widehat Y^N$ has under $\mathbb Q^{r,N}$ the same law as
the solution of \eqref{non-mollified-particle-system} starting from $x$.
\end{lemma}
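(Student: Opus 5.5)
We need to prove Lemma \ref{lem:martingale-bounds}: $Z^{r,N}$ is a true martingale, its $L^p(\hat{\mathbb P})$ moments are bounded uniformly in $N\ge r$, in $x$ and in $t\in[0,1]$, and the Girsanov identification holds.

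\textbf{Step 1: Novikov-type exponential bounds.} Under $\hat{\mathbb P}$, the first $r$ coordinates $\widehat Y^{1,N},\dots,\widehat Y^{r,N}$ are independent generalized Ornstein-Uhlenbeck processes driven by $W^1,\dots,W^r$. The block $(\widehat Y^{r+1,N},\dots,\widehat Y^{N,N})$ is driven only by $W^{r+1},\dots,W^N$ and does not feed back into the first block, so it is independent of the first $r$ coordinates. Every term of $B^{i,r,N}$ involves at least one index in $\{1,\dots,r\}$; the terms with $i=j$ vanish because $K(0)=0$. Cauchy-Schwarz then gives
\begin{equation*}
\sum_{i=1}^N |B^{i,r,N}_s|^2 \le \frac{C r}{N}\sum_{i\le r}\sum_{j\ne i}|K(\widehat Y^{i,N}_s-\widehat Y^{j,N}_s)|^2 .
\end{equation*}
For $i\le r<j$, the pairs $(i,j)$ and $(j,i)$ each contribute with weight $1/N$ times a sum over $j$. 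The plan is then the following.
\begin{itemize}
\item Apply Jensen/Hölder to the average over $j$ to write $\exp\bigl(\beta\int_0^1\frac1N\sum_j|K(\widehat Y^i-\widehat Y^j)|^2\bigr)\le \frac1N\sum_j \exp\bigl(\beta\int_0^1 |K(\widehat Y^i-\widehat Y^j)|^2\bigr)$.
\item Use Hölder over the finitely many $i\le r$.
\item For $j>r$, condition on the independent path $w=\widehat Y^{j,N}$ and apply Lemma~\ref{lem:exponential-OU} to $\overline Y=\widehat Y^{i,N}$, uniformly in $w$ and in the starting point.
\item For $j\le r$, $j\ne i$, condition on $\widehat Y^{j,N}$ similarly, using its independence from $\widehat Y^{i,N}$.
\end{itemize}
This yields, for every $\beta>0$,
\begin{equation*}
\sup_{N,x}\mathbb E_{\hat{\mathbb P}}\Bigl[\exp\Bigl(\beta\sum_{i}\int_0^1|B^{i,r,N}_s|^2\dd s\Bigr)\Bigr]\le C(r,\beta).
\end{equation*}

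\textbf{Step 2: martingale property and $L^p$ bounds.} Novikov's criterion with $\beta=1/4$ shows that $Z^{r,N}$ is a true martingale on $[0,1]$. For the $p$-th moment, I use the standard trick
\begin{equation*}
(Z_t)^p=\mathcal E\bigl(-\tfrac{2p}{\sqrt2}B\cdot W\bigr)_t^{1/2}\exp\Bigl(c_p\int_0^t|B|^2\Bigr), \qquad c_p=\tfrac{2p^2-p}{4}.
\end{equation*}
Cauchy-Schwarz then gives $\mathbb E[Z_t^p]\le \mathbb E[\exp(2c_p\int_0^1|B|^2)]^{1/2}$, since the stochastic exponential is a supermartingale with expectation at most $1$. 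Step 1 bounds this uniformly in $N\ge r$, $x$ and $t$.

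\textbf{Step 3: Girsanov identification.} Under $\mathbb Q^{r,N}$, the processes $\widetilde W^i_t=W^i_t+\frac1{\sqrt2}\int_0^t B^{i,r,N}_s\dd s$ are independent Brownian motions. Substituting into \eqref{eq:auxiliary-particle-system}, each $\widehat Y^{i,N}$ picks up exactly the missing interaction terms, so $\widehat Y^N$ solves \eqref{non-mollified-particle-system} driven by $\widetilde W$. For $i>r$ one should check that the indices split as $j\le r$ plus $j>r$, with the $j=i$ term zero. Weak uniqueness of \eqref{non-mollified-particle-system} then gives equality in law.

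The case $r=N$ also requires that the bound is only used for fixed $r$, which matches the statement's $C=C(r,p)$. The main obstacle is Step 1: one must carefully handle the lack of independence among the $j\le r$ pairs and keep constants independent of $N$, which the $1/N$-averaged Jensen step is designed to do.
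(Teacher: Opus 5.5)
Your proposal is correct and follows essentially the paper's argument: Jensen on the $1/N$-averages, H\"older over the $r$ decoupled indices, conditioning on an independent path and applying Lemma~\ref{lem:exponential-OU}, Novikov, the $\mathcal E(2pM)^{1/2}$ factorisation, then Girsanov and uniqueness in law. The variations are minor: you regroup every term by its index in $\{1,\dots,r\}$ instead of splitting off the $i>r$ block by Cauchy--Schwarz and a H\"older over $j>r$, and you use $\mathbb E_{\hat{\mathbb P}}[\mathcal E(2pM)_t]\le 1$ (supermartingale) instead of Novikov for $2pM$.

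One small correction: the contributions of $B^{j,r,N}$ for $j>r$ are $K(\widehat Y^{j,N}_s-\widehat Y^{i,N}_s)=\widetilde K(\widehat Y^{i,N}_s-\widehat Y^{j,N}_s)$, with $\widetilde K(z)\coloneqq K(-z)$ and $i\le r$. Your displayed bound should therefore carry $|K|^2+|\widetilde K|^2$ rather than $|K|^2$ alone. This is harmless since $\|\widetilde K\|_{L^k}=\|K\|_{L^k}$ and $\widetilde K(0)=0$, exactly as the paper handles it.
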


\begin{proof}
We first establish a uniform in $N$ and $x$ estimate for
\( \exp\left(\gamma
        \sum_{i=1}^N
        \int_0^t
        |B_s^{i,r,N}|^2\,\dd s \right).\)
Let $\gamma>0$.
By \eqref{eq:def-B-girsanov} and Jensen's
inequality, for $i\leq r$,
\[|B_s^{i,r,N}|^2
    \leq
    \frac1N\sum_{j=1}^N
    |K(\widehat Y_s^{i,N}-\widehat Y_s^{j,N})|^2,\]
whereas, for $i>r$,
\[|B_s^{i,r,N}|^2
    \leq
    \frac{r}{N^2}\sum_{j=1}^r
    |K(\widehat Y_s^{i,N}-\widehat Y_s^{j,N})|^2.\]
Consequently,
\begin{align*}
\sum_{i=1}^N\int_0^t|B_s^{i,r,N}|^2\,\dd s
&\leq
\frac1N\sum_{i=1}^r\sum_{j=1}^N
\int_0^t
|K(\widehat Y_s^{i,N}-\widehat Y_s^{j,N})|^2\,\dd s
\\
&\quad+
\frac r{N^2}\sum_{i=r+1}^N\sum_{j=1}^r
\int_0^t
|K(\widehat Y_s^{i,N}-\widehat Y_s^{j,N})|^2\,\dd s.
\end{align*}
Hence, by the Cauchy-Schwarz inequality,
\begin{align*}
\E_{\PP}&
\left[\exp\left(
        \gamma
        \sum_{i=1}^N
        \int_0^t
        |B_s^{i,r,N}|^2\,\dd s \right) \right]
\\
&\leq
\left(\E_{\PP}
\exp\left\{\frac{2\gamma}{N}
    \sum_{i=1}^r\sum_{j=1}^N
    \int_0^t
    |K(\widehat Y_s^{i,N}-\widehat Y_s^{j,N})|^2\,
    \dd s \right\}\right)^{1/2}
\\
&\quad\times
\left(\E_{\PP}\exp\left\{
    \frac{2\gamma r}{N^2}
    \sum_{i=r+1}^N\sum_{j=1}^r
    \int_0^t
    |K(\widehat Y_s^{i,N}-\widehat Y_s^{j,N})|^2\,\dd s\right\}\right)^{1/2}.
\end{align*}
For the first factor, H\"older's inequality applied to the product
over $i=1,\ldots,r$, followed by Jensen's inequality applied to the
convex function $x\mapsto e^x$, gives
\begin{align*}
&\E_{\PP}
\exp\left\{
    \frac{2\gamma}{N}
    \sum_{i=1}^r\sum_{j=1}^N
    \int_0^t
    |K(\widehat Y_s^{i,N}-\widehat Y_s^{j,N})|^2\,\dd s
\right\}
\\
&\leq
\prod_{i=1}^r
\left(
\E_{\PP}
\exp\left\{
    \frac{2\gamma r}{N}
    \sum_{j=1}^N
    \int_0^t
    |K(\widehat Y_s^{i,N}-\widehat Y_s^{j,N})|^2\,\dd s
\right\}
\right)^{1/r}
\\
&\leq
\prod_{i=1}^r
\left(
    \frac1N\sum_{j=1}^N
    \E_{\PP}
    \exp\left\{
        2\gamma r
        \int_0^t
        |K(\widehat Y_s^{i,N}-\widehat Y_s^{j,N})|^2\,\dd s
    \right\}
\right)^{1/r}.
\end{align*}
Similarly, when $N>r$, H\"older's inequality applied to the product
over $j=r+1,\ldots,N$, followed by Jensen's inequality, gives
\begin{align*}
\E_{\PP}&
\exp\left\{\frac{2\gamma r}{N^2}
    \sum_{j=r+1}^N\sum_{i=1}^r
    \int_0^t |K(\widehat Y_s^{j,N}-\widehat Y_s^{i,N})|^2\,\dd s\right\}
\\
&\leq
\prod_{j=r+1}^N
\left(
\E_{\PP}
\exp\left\{
    \frac{2\gamma r(N-r)}{N^2}
    \sum_{i=1}^r
    \int_0^t |K(\widehat Y_s^{j,N}-\widehat Y_s^{i,N})|^2\,\dd s
\right\}\right)^{1/(N-r)}
\\
&\leq
\prod_{j=r+1}^N
\left(\frac1r\sum_{i=1}^r
    \E_{\PP}
    \exp\left\{
        \frac{2\gamma r^2(N-r)}{N^2}
        \int_0^t
        |K(\widehat Y_s^{j,N}-\widehat Y_s^{i,N})|^2\,\dd s
    \right\}\right)^{1/(N-r)}.
\end{align*}
Combining the previous estimates, we obtain
\begin{align*}
\E_{\PP}&
\left[\exp\left(
        \gamma
        \sum_{i=1}^N
        \int_0^t
        |B_s^{i,r,N}|^2\,\dd s \right)\right]
\\
&\leq
\left(\prod_{i=1}^r
    \frac1N\sum_{j=1}^N
    \E_{\PP}
    \exp\left\{
        2\gamma r
        \int_0^t
        |K(\widehat Y_s^{i,N}-\widehat Y_s^{j,N})|^2\,\dd s
    \right\}\right)^{\frac1{2r}}
\\
&\quad\times
\left(\prod_{j=r+1}^N
    \frac1r\sum_{i=1}^r
    \E_{\PP}
    \exp\left\{
        \frac{2\gamma r^2(N-r)}{N^2}
        \int_0^t
        |K(\widehat Y_s^{j,N}-\widehat Y_s^{i,N})|^2\,\dd s
    \right\}\right)^{\frac1{2(N-r)}}.
\end{align*}
When $N=r$, the second factor is omitted.

For $i\leq r$, the process $\widehat Y^{i,N}$ is a generalized
Ornstein--Uhlenbeck process and is independent of
$\widehat Y^{j,N}$ for every $j\neq i$, whereas the term $j=i$
vanishes by the convention $K(0)=0$. Hence, conditioning on the
trajectory of $\widehat Y^{j,N}$ and using
Lemma~\ref{lem:exponential-OU}, each expectation appearing in the
first factor is bounded uniformly in $N$, in the initial
configuration and in $t\in[0,1]$.
For the second factor, for $i\leq r<j$ we write
\( K(\widehat Y_s^{j,N}-\widehat Y_s^{i,N})
    =
    \widetilde K(\widehat Y_s^{i,N}-\widehat Y_s^{j,N}),\) with
    \(\widetilde K(x)\coloneqq K(-x).\)
Since $\|\widetilde K\|_{L^k}=\|K\|_{L^k}$, conditioning again on the
trajectory of $\widehat Y^{j,N}$ allows us to apply
Lemma~\ref{lem:exponential-OU}. Moreover, for $N>r$,
\(\frac{2\gamma r^2(N-r)}{N^2}
    \leq 2\gamma r,\)
so the parameter in the exponential is bounded independently of
$N$. We therefore obtain, for any $\gamma>0$,
\begin{equation}
\label{eq:uniform-exp-B}
    \sup_{N\geq r}
    \sup_{x\in(\mathbb R^d)^N}
    \sup_{t\in[0,1]}
    \E_{\PP}
    \left[\exp\left(\gamma
            \sum_{i=1}^N \int_0^t
            |B_s^{i,r,N}|^2\,\dd s\right)\right] \leq C.
\end{equation}
Hence, Novikov's criterion
(see e.g. \cite[Chapter 3, Proposition 5.13]{KS91})
implies that $(Z_t^{r,N})_{t\in[0,1]}$ is a true martingale.
Now define
\[M_t \coloneqq  -\frac1{\sqrt2}
    \sum_{i=1}^N
    \int_0^t
    B_s^{i,r,N}\cdot\dd W_s^i.\]
Then
\[\langle M\rangle_t =\frac12
    \sum_{i=1}^N \int_0^t |B_s^{i,r,N}|^2\,\dd s,\]
and
\[Z_t^{r,N} =\exp\left(M_t-\frac12\langle M\rangle_t \right) =\mathcal E(M)_t.\]
Let now $p>1$. We have
\[(Z_t^{r,N})^p =
    \mathcal E(2pM)_t^{1/2}
    \exp\left( \left(p^2-\frac p2\right)
        \langle M\rangle_t\right).\]
Since \eqref{eq:uniform-exp-B} holds for any $\gamma>0$,
Novikov's criterion also applies to $2pM$. Thus
$\mathcal E(2pM)$ is a true martingale. By the Cauchy-Schwarz
inequality,
\begin{align*}
    \mathbb E_{\hat{\mathbb P}}
    \left[ (Z_t^{r,N})^p \right]
    &\leq
    \left( \mathbb E_{\hat{\mathbb P}} [\mathcal E(2pM)_t]\right)^{1/2}
    \mathbb E_{\hat{\mathbb P}}
    \left[\exp\left(
            (2p^2-p)\langle M\rangle_t \right) \right]^{1/2}
    \\
    &=\mathbb E_{\hat{\mathbb P}}
    \left[ \exp\left(
            \left(p^2-\frac p2\right)
            \sum_{i=1}^N
            \int_0^t
            |B_s^{i,r,N}|^2\,\dd s
        \right) \right]^{1/2}
    \\
    &\leq C.
\end{align*}
This proves \eqref{eq:uniform-martingale-bound}.
Finally, by Girsanov's theorem, under $\mathbb Q^{r,N}$ the processes
\[\widetilde W_t^i \coloneqq  W_t^i
    +\frac1{\sqrt2}
    \int_0^t
    B_s^{i,r,N}\,\dd s,
    \qquad i=1,\ldots,N,\]
are independent Brownian motions. Replacing $W^i$ by $\widetilde W^i$
in \eqref{eq:auxiliary-particle-system}, we obtain
\[ \dd\widehat Y_t^{i,N}
    =-F(\widehat Y_t^{i,N})\,\dd t
    - \frac1N\sum_{j=1}^N
    K(\widehat Y_t^{i,N}-\widehat Y_t^{j,N})\,\dd t
    +\sqrt2\,\dd\widetilde W_t^i,
    \qquad i=1,\ldots,N.\]
The conclusion follows by uniqueness in law.
\end{proof}

\bibliographystyle{amsplainhyper_m}
\bibliography{UniformPOC}

\end{document}